\documentclass[11pt, reqno]{amsart}
\usepackage{leading}
\usepackage{blindtext}
\usepackage{bm}
\usepackage{lineno}
\newcommand{\todo}[1]{}

\usepackage[new]{old-arrows}

\makeatletter
\DeclareMathAlphabet{\mathpzc}{OT1}{pzc}{m}{it}
\usepackage[margin=2.3cm]{geometry}
\usepackage{amsmath, amssymb, amsfonts, amstext, verbatim, amsthm, mathrsfs}
\usepackage[mathcal]{eucal}
\usepackage{microtype}
\usepackage[colorlinks=true,linkcolor=blue,citecolor=red,urlcolor=blue,citebordercolor={0 0 1},urlbordercolor={0 0 1},linkbordercolor={0 0 1}]{hyperref} 
\usepackage{comment}
\DeclareSymbolFont{bbold}{U}{bbold}{m}{n}
\DeclareSymbolFontAlphabet{\mathbbold}{bbold}

\usepackage[usenames]{color}
\usepackage{aliascnt} 
\usepackage{enumerate}
\usepackage{xspace}
\usepackage{tikz-cd}
\usepackage[nameinlink]{cleveref}

\theoremstyle{plain}

\newcommand{\refnewtheoremn}[4]{
	\newaliascnt{#1}{#2}
	\newtheorem{#1}[#1]{#3}
	\aliascntresetthe{#1}
	\expandafter\providecommand\csname #1autorefname\endcsname{#4}}

\newcommand{\refnewtheorem}[3]{\refnewtheoremn{#1}{#2}{#3}{#3}}

\def\makeCal#1{
	\expandafter\newcommand\csname c#1\endcsname{\mathcal{#1}}}
\def\makeBB#1{
	\expandafter\newcommand\csname b#1\endcsname{\mathbb{#1}}}
\def\makeFrak#1{
	\expandafter\newcommand\csname f#1\endcsname{\mathfrak{#1}}}

\count@=0
\loop
\advance\count@ 1
\edef\y{\@Alph\count@}
\expandafter\makeCal\y
\expandafter\makeBB\y
\expandafter\makeFrak\y
\ifnum\count@<26
\repeat

\newtheorem{theorem}{Theorem}[section]

\refnewtheorem{lemma}{thm}{Lemma}
\refnewtheorem{assumptions}{thm}{Assumptions}
\refnewtheorem{examples}{thm}{Examples}
\refnewtheorem{claim}{thm}{Claim}
\refnewtheorem{cor}{thm}{Corollary}
\refnewtheorem{conj}{thm}{Conjecture}
\refnewtheorem{prop}{thm}{Proposition}
\refnewtheorem{proposition}{thm}{Proposition}
\refnewtheorem{quest}{thm}{Question}
\refnewtheorem{assumption}{thm}{Assumption}

\theoremstyle{definition}
\refnewtheorem{remark}{thm}{Remark}
\refnewtheorem{remarks}{thm}{Remarks}
\refnewtheorem{defn}{thm}{Definition}
\refnewtheorem{definition}{thm}{Definition}
\refnewtheorem{notn}{thm}{Notation}
\refnewtheorem{const}{thm}{Construction}
\refnewtheorem{example}{thm}{Example}
\refnewtheorem{fact}{thm}{Fact}
\refnewtheorem{problem}{thm}{Problem}
\refnewtheorem{convention}{thm}{Convention}

\newcommand{\im}{\operatorname{im}}

\newcommand {\Hom}{\operatorname{Hom}}

\newcommand{\spec}{\operatorname{spec}}

\newcommand{\ul}{Ulrich }
\newcommand{\ses}{short exact sequence }
\newcommand{\les}{long exact sequence }
\newcommand{\wrt}{with respect to }

\numberwithin{equation}{subsection}
\usepackage{expl3}
\usepackage[english]{babel}
\usepackage{xparse}
\ExplSyntaxOn 

\NewDocumentCommand\alotofdots{ D(){\ldotp} O{3}}
{\mathinner{\prg_replicate:nn{#2}{#1}}}
\ExplSyntaxOff

\begin{document}

	\title[ACM, almost Ulrich, and Ulrich bundles arising from Frobenius]{ACM, almost Ulrich, and Ulrich vector bundles arising from Frobenius}
	
	 \author{Jagadish Pine}
	
	\address{Indian Institute of Science Education and Research, Pune,  Main Academic, Dr Homi Bhabha Rd, Pashan, Maharashtra 411008}
	
	\email{math.jagadishpine@gmail.com, jagadish.pine@iiserpune.ac.in}

	\date{}
	
	\keywords{Sheaves of exact and closed forms, Positive characteristic methods; ACM, almost Ulrich, and Ulrich bundles; Frobenius splitting and ordinarity; Abelian varieties}
	
	\subjclass[2020]{ 14G17 · 13A35 · 14J60 · 14K02 · 14K05}
	
	\begin{abstract}
		Let $X$ be a smooth projective variety of dimension $n$ over an algebraically closed field of positive characteristic.
		We investigate ACM, almost Ulrich (both ACM and weakly Ulrich), and Ulrich properties of the sheaves of locally exact and closed $i$-forms $B_X^i$, $Z_X^i$. These cohomological properties are closely related to the fundamental concepts of Frobenius splitting and ordinarity.
		Joshi studied the sheaf $B^1_X$ in this context in \cite{MR3920911, MR4206787}, including ACM and almost Ulrich twists of $B^1_X$.
		However, varieties with Ulrich twisted $B_X^1$ have not been classified for $n\ge2$. 
		We classify subcanonical polarized varieties with initialized ACM $B^i_X(aH)$ for $i=1, n$ and prove that, for $n \ge 2$,
		varieties with ACM $B^i_X$ for all $i \le \left\lceil\frac{n}{2}\right\rceil$ are ordinary. 
		 When $X$ is ordinary with trivial $\Omega^1_X$, almost Ulrich twists of $B^i_X$ and $F_*^e(Z_X^i(aH))$ are determined.  
		
		We classify polarized varieties $(X,H)$ with $B_X^i(aH)$ Ulrich when $n=1, 2$, and establish strong necessary criteria for $n \geq 3$, together with an example. We also prove that $Z_X^i(aH)$ is never Ulrich.
		 Finally, we classify polarized abelian varieties with Ulrich $B^1_X \otimes L$ allowing twists by arbitrary line bundles.
		 \end{abstract}

	\maketitle

	\section{Introduction}
	\label{intro}
	
	The study of arithmetically Cohen-Macaulay (ACM) bundles goes back at least to Horrocks’s splitting criterion \cite{MR169877}. An ACM bundle on a polarized variety $(X, H)$ is a vector bundle $E$ all of whose intermediate cohomology groups $H^i(X, E(kH))$ vanish. Ulrich bundles form a distinguished class of ACM bundles. They are globally generated, and attain the upper bound for the number of global sections of initialized ACM bundles. There are generalizations including $\ell$-away ACM, weakly Ulrich, and almost Ulrich bundles that are both weakly Ulrich and ACM. The significance of the existence of weakly Ulrich (resp. Ulrich) bundles is that the Chow form of the underlying variety \wrt the embedding given by $H$ admits a determinantal (resp. linear determinantal) representation \cite{eisenbud2003resultants}.

	Eisenbud and Schreyer in \cite{eisenbud2003resultants} conjectured the existence of an Ulrich bundle on every polarized smooth  projective variety. Their existence has been established for several classes of varieties; see \cite{MR4409698}. In a recent preprint \cite{anghel2026smooth}, a smooth complex polarized surface $(X, H)$ is constructed that admits no Ulrich bundle. The counterexample is generalized in \cite{lopez2026existence, anghel2026polarized}.

In a related direction, whether (twisted) tangent, cotangent, normal, conormal bundles are Ulrich is studied in characteristic zero; see for example \cite{MR4856859}. These questions are in line with semistability and positivity properties of these bundles studied in the past.

	Much of the existing work on ACM and Ulrich bundles has focused on varieties over fields of characteristic $0$. In this article, we study the ACM, almost Ulrich, and Ulrich properties of bundles that naturally arise in positive-characteristic algebraic geometry. These properties are closely related with the fundamental notions of Frobenius splitting and ordinarity of varieties.

	Throughout the article, we work over an algebraically closed field $k$ of characteristic $p>0$. Let $X$ be a smooth projective variety of dimension $n$ over $k$. Let $F:X \longrightarrow X$ be the absolute Frobenius morphism; see Section \ref{second}. For an integer $e \ge 1$, we consider the iterated Frobenius cokernel bundle
	\[
	0 \longrightarrow \mathcal O_X \longrightarrow F_*^e \mathcal O_X \longrightarrow B_e \longrightarrow 0.
	\]
When $e=1$, $B_1$ is the sheaf of locally exact $1$-forms $B^1_X$. Let $d: \Omega^i_X \longrightarrow \Omega^{i+1}_X$ denote the exterior derivative. More generally, we consider the sheaf of locally exact $i$-forms 
	$$B^i_X:=\im\big(F_*d:F_* \Omega^{i-1}_X \longrightarrow F_* \Omega^{i}_X\big)$$
	 and the sheaf of locally closed $i$-forms 
	 $$Z^i_X:=\ker\big(F_*d:F_* \Omega^{i}_X \longrightarrow F_* \Omega^{i+1}_X\big).$$
	 We investigate and classify varieties admitting twists of $B_e$, $B_X^i$, and $Z_X^i$ that are ACM, almost Ulrich, or Ulrich. Let $H$ be a very ample divisor on $X$.  For a vector bundle $E$ on $X$, write $E(kH):=E\otimes \mathcal O_X(kH)$.
	
	Joshi studied the sheaf $B_X^1$ in this context in \cite{MR3920911, MR4206787}. He proved that $B_X^1$ is ACM with respect to $H$ when $X$ is Frobenius split \cite[Theorem 3.1]{MR3920911}, and showed that $B_X^1(H)$ is Ulrich when $X$ is an ordinary curve \cite[Theorem 2.1]{MR3920911}. We discuss further results of Joshi and compare them with ours below.
	
   To the best of our knowledge, the ACM, almost Ulrich and Ulrich properties of twists of $B_X^i$ for $i\geq2$ and of twists of $Z_X^i$ have not previously been studied. Varieties with Ulrich twists of $B_X^1$ have also not been classified when $\dim X \geq 2$.
	
	\begin{theorem} [Theorem \ref{thm3.1}, Proposition \ref{B_eindecomposability}]
		\label{intro:thm1}
		Let $X$ be a smooth projective curve of genus $g$. Let $e \ge 1$ be an integer. Then the following hold:
		\begin{enumerate}
			\item [(1)] The curve $X$ is ordinary if and only if $B_e(H)$ is Ulrich with respect to some (equivalently, every) very ample divisor $H$ on $X$.
			
			\item [(2)] The bundle $B_e$ is semistable. If $g \ge 2$, $B_e$ is simple, and thus, indecomposable.
		\end{enumerate} 
	\end{theorem}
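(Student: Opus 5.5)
For part (1) I will use the Ulrich criterion on a curve. A vector bundle $E$ of rank $r$ on a curve $X$ with $\deg H = d$ is Ulrich with respect to $H$ if and only if $H^0(E(-H)) = H^1(E(-H)) = 0$, or equivalently $\chi(E(-H))=0$ together with one of these vanishings. (The ACM condition is vacuous in dimension one.) So $B_e(H)$ is Ulrich exactly when $H^0(B_e)=H^1(B_e)=0$. This condition does not involve $H$ at all, which is what gives ``some (equivalently, every)''.

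The first step is to compute $\chi(B_e)$. We have $\operatorname{rk} B_e = p^e-1$. Riemann--Roch on $F^e_*\mathcal O_X$ gives $\chi(F^e_*\mathcal O_X)=\chi(\mathcal O_X)$, since $F^e$ is finite. Hence $\chi(B_e)=0$, and Ulrichness reduces to $H^0(B_e)=0$.

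The second step is to read off $H^0(B_e)$ from the long exact sequence
\[
0\to H^0(\mathcal O_X)\to H^0(F^e_*\mathcal O_X)\to H^0(B_e)\to H^1(\mathcal O_X)\xrightarrow{F^e} H^1(\mathcal O_X)\to \cdots.
\]
The first map is an isomorphism $k\to k$. Therefore $H^0(B_e)\cong\ker\bigl(F^{e}\colon H^1(\mathcal O_X)\to H^1(\mathcal O_X)\bigr)$. For a curve, ordinarity means that $F$ is bijective on $H^1(\mathcal O_X)$, equivalently $H^0(B^1_X)=0$. Since $F$ is $p$-linear, injectivity of $F$ is equivalent to injectivity of $F^e$, so the kernel vanishes for some $e$ if and only if it vanishes for all $e$. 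This proves (1).

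For (2), semistability: I will first show $\mu(B_e)=\deg B_e/(p^e-1)$, where $\deg B_e=\deg F^e_*\mathcal O_X=(p^e-1)(g-1)$. Then I will show that every subsheaf $E\subset B_e$ has $\mu(E)\le g-1$. The plan is to use the known semistability of $F^e_*L$ on curves (Sun's theorem for $g\ge2$), or more directly the following adjunction argument. Pulling back gives $F^{e*}B_e$, which carries the canonical filtration, with graded pieces $\omega^{j}$ for $1\le j\le p^e-1$ coming from the Cartier/jet filtration of $F^{e*}F^e_*\mathcal O$. Given a destabilizing $E$, its pullback $F^{e*}E$ maps nontrivially into this filtration, which yields the slope bound. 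For $g\le1$ I will treat the cases separately. If $g=0$, then $B_e\cong\mathcal O(-1)^{p^e-1}$. If $g=1$, then $B_e$ has degree $0$ and is a successive extension of degree-$0$ pieces, or its structure follows from Atiyah's classification.

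For simplicity when $g\ge2$: stability would suffice, but I expect only semistability in general. So instead I will compute $\operatorname{End}(B_e)$ directly. Applying $\Hom(-,B_e)$ to the defining sequence and using adjunction $\Hom(F^e_*\mathcal O,B_e)$ to relate endomorphisms to sections, the goal is to show every endomorphism is a scalar. This is the main obstacle. My first attempt will be to prove the stronger fact that $B_e$ is stable for $g\ge2$, deducing it from the stability of $F^e_*\mathcal O_X$ (Sun) via the quotient map $F^e_*\mathcal O_X\to B_e$. This works because a quotient of $B_e$ of slope $\le\mu(B_e)$ lifts to a quotient of $F^e_*\mathcal O_X$, and the slope inequality is strict because $\mu(B_e)>\mu(F^e_*\mathcal O_X)$. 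Once simplicity is established, indecomposability follows immediately.
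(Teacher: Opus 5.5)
Your part (1) is correct, and it takes a different route from the paper. You combine $\chi(B_e)=0$ with $H^0(X,B_e)\cong\ker\bigl(F^e\colon H^1(X,\cO_X)\to H^1(X,\cO_X)\bigr)$ and $p$-linearity. The paper instead runs the sequences $0\to B_{e-1}\to B_e\to F^{e-1}_*B_1\to0$ together with $H^\bullet(F^{j}_*B_1)\cong H^\bullet(B_1)$.

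The genuine gaps are in part (2), starting with simplicity. Your route through stability of $B_e$ cannot work, for two reasons.
\begin{itemize}
\item The deduction is backwards. For a quotient $B_e\to Q$, stability of $F^e_*\cO_X$ only gives $\mu(Q)>\mu(F^e_*\cO_X)=(p^e-1)(g-1)/p^e$. This is \emph{smaller} than $\mu(B_e)=g-1$, so it cannot exclude $\mu(Q)\le\mu(B_e)$; the inequality $\mu(B_e)>\mu(F^e_*\cO_X)$ works against you.
\item The claim is false for $e\ge2$. Here $B_1\subset B_e$ is a proper subbundle with $\mu(B_1)=g-1=\mu(B_e)$, so $B_e$ is strictly semistable.
\end{itemize}
What you actually need is $\Hom(F^e_*\cO_X,B_e)=k\cdot q$, since $\varphi\mapsto\varphi\circ q$ embeds $\operatorname{End}(B_e)$ into this space. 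The paper gets this from $\operatorname{End}(F^e_*\cO_X)=k$ (Sun). It then uses the trace splitting $R(T)(a)=\operatorname{tr}(T\circ m_a)$ of $0\to(F^e_*\cO_X)^\vee\to\mathcal{E}nd(F^e_*\cO_X)\to\mathcal{H}om(F^e_*\cO_X,B_e)\to0$, which makes $\operatorname{End}(F^e_*\cO_X)\to\Hom(F^e_*\cO_X,B_e)$ surjective. Your adjunction idea could also do it. Duality for the finite flat map $F^e$ gives $\Hom(F^e_*\cO_X,B_e)\cong H^0\bigl(X,F^{e*}B_e\otimes\omega_X^{1-p^e}\bigr)$. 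The filtration of $F^{e*}B_e$ with graded pieces $\omega_X^{j}$, $1\le j\le p^e-1$, then bounds this dimension by $1$ when $g\ge2$. However, you flag this step as the obstacle and do not carry it out.

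Semistability for $g\ge2$ is not established either. Semistability or stability of $F^e_*\cO_X$ does not pass to $B_e$. A rank-$r$ subsheaf $E\subset B_e$ lifts to a rank-$(r+1)$ subsheaf of $F^e_*\cO_X$ of the same degree. That only yields $\deg E\le (r+1)(p^e-1)(g-1)/p^e$, which is at most $r(g-1)$ only when $r=p^e-1$. Your pull-back sketch is too weak as stated. The filtration alone gives $\deg F^{e*}E\le (g-1)\,r\,(2p^e-r-1)$, which is $\le p^e r(g-1)$ only when $r=p^e-1$. You would need transversality of the canonical connection to force the jumps of $F^{e*}E$ into an initial segment. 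That works for $e=1$. For $e\ge2$, however, the induced maps $\omega_X^{j}\to\omega_X^{j-1}\otimes\omega_X$ are multiplication by $\pm j$ and vanish when $p\mid j$, so the connection alone no longer controls the jumps. This is consistent with $B_1\subset B_e$ having slope exactly $g-1$.

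The missing idea is the canonical filtration $0=B_0\subset B_1\subset\cdots\subset B_e$ with $B_j/B_{j-1}\cong F^{j-1}_*B_1$. Each quotient is semistable of slope $g-1$: Raynaud gives this for $B_1$, and Sun's theorem that $F_*$ preserves semistability gives it for the rest. Hence $B_e$ is an iterated extension of semistable bundles of the same slope, and so is semistable.
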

	
	The case (1) generalizes \cite[Theorem 2.1]{MR3920911} which applies to $B_1=B^1_X$. In contrast, for a smooth projective curve $X$,  $Z_X^1(kH)\cong(F_*\omega_X)(kH)$ is not Ulrich for any integer $k$ with respect to $H$; see Theorem \ref{closedformsnotulrich}.

When $X$ is Frobenius split, both $B_X^1$ and $B_X^n$ are ACM with respect to $H$ by Proposition \ref{acmjoshiprop} generalizing \cite[Theorem 3.1]{MR3920911}. However, $B_X^i$ for $ 2 \le i \le n-1$ and $Z_X^i$ need not be ACM even when $X$ is Frobenius split; see Examples \ref{nonacmexample} and \ref{exams3e1}. Our main ACM results are summarized below.

	\begin{theorem} [Proposition \ref{forcedordinarity}, Proposition \ref{frobeniusdirectacm}]
		Let $X$ be a smooth projective variety of dimension $n\geq 2$, and let $H$ be a very ample divisor on $X$. Then the following hold:
		\begin{enumerate}
			\item [(1)] If $B_X^i$ is ACM with respect to $H$ for every $1\leq i\leq \left\lceil\frac{n}{2}\right\rceil$, then $X$ is ordinary.
			
			\item [(2)] Assume $\omega_X \cong \mathcal O_X(\alpha H)$ for $\alpha \in \mathbb Z$. Then $B_X^1$ (equivalently, $B_X^n$) is ACM with respect to $H$ if and only if $X$ is Frobenius split.
			
			\item [(3)] There exists an integer $t_0\geq0$ such that the bundles
			$$ F^e_*\bigl(B_X^i((t_0+1)H)\bigr) \qquad\text{and}\qquad F^e_*\bigl(Z_X^i((t_0+1)H)\bigr) $$
			are ACM with respect to $H$ for every $1\leq i\leq n$ and every integer $e\geq1$ satisfying $p^e>2t_0+1$.
		\end{enumerate}
		\end{theorem}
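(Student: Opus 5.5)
Part (1) will go through the Bloch--Kato--Illusie criterion: $X$ is ordinary if and only if $H^j(X,B^i_X)=0$ for all $i\ge1$ and all $j$. The ACM hypothesis with $k=0$ gives $H^j(X,B^i_X)=0$ for $0<j<n$ and $1\le i\le\lceil n/2\rceil$. To reach the remaining indices and degrees I would use the Cartier sequences
\[
0\to B^i_X\to Z^i_X\xrightarrow{C}\Omega^i_X\to 0,\qquad 0\to Z^i_X\to F_*\Omega^i_X\xrightarrow{F_*d} B^{i+1}_X\to 0,
\]
together with Serre duality. The duality input is the perfect pairing $B^i_X\otimes B^{n+1-i}_X\to F_*\omega_X\to\omega_X$, which gives $(B^i_X)^\vee\cong B^{n+1-i}_X\otimes\omega_X^{-1}$. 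Hence $H^j(B^i_X)$ is dual to $H^{n-j}(B^{n+1-i}_X)$, and this exchanges the indices $i$ and $n+1-i$. Because $i\le\lceil n/2\rceil$ implies $n+1-i\ge\lfloor n/2\rfloor+1$, the two ranges together cover all $i$ between $1$ and $n$, and so we get vanishing of every intermediate cohomology group.

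The extreme degrees $j=0$ and $j=n$ still need to be handled. For $j=0$ with $i=1$, note that $H^0(B^1_X)$ is the cokernel of $H^0(\mathcal O_X)\to H^0(F_*\mathcal O_X)$ (both are $k$) followed by the kernel of $H^1(\mathcal O_X)\xrightarrow{F}H^1(\mathcal O_X)$. From the sequence for $B^1_X$, the vanishing of $H^1(B^1_X)$ forces $F$ to be surjective, hence bijective, on $H^1(\mathcal O_X)$, so $H^0(B^1_X)=0$. For general $i$, I would induct on $i$ using the two sequences, which yields $H^0(B^i_X)\hookrightarrow H^0(Z^i_X)$ together with compatibility with the Cartier operator. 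The $j=n$ cases then follow by duality. \textbf{This bookkeeping over extreme degrees is the main obstacle.} It is where $n\ge2$ is used, since we need some $0<j<n$. If it fails, I would fall back on the slope/Hodge–Witt interpretation in which the vanishing of $H^j(B^i)$ for $j>0$ already suffices.

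For part (2), the direction ``split $\Rightarrow$ ACM'' is Proposition \ref{acmjoshiprop}. For the converse, twist the sequence $0\to\mathcal O_X\to F_*\mathcal O_X\to B^1_X\to0$ by $-\alpha H$, so that $\omega_X^{-1}=\mathcal O(-\alpha H)$ appears. The projection formula gives $F_*\mathcal O_X\otimes\mathcal O(-\alpha H)=F_*\mathcal O(-p\alpha H)$. Dualizing the splitting question, $X$ is Frobenius split exactly when $H^n(\omega_X)\to H^n(F_*\omega_X)$ is injective. By the sequence tensored with $\omega_X=\mathcal O(\alpha H)$, this is equivalent to $H^{n-1}(B^1_X(\alpha H))\to H^n(\omega_X)$ being zero. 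That map vanishes because ACM gives $H^{n-1}(B^1_X(\alpha H))=0$ when $n\ge2$. The equivalence with $B^n_X$ comes from the duality $B^n_X\cong (B^1_X)^\vee\otimes\omega_X$ noted above, and the subcanonical hypothesis keeps the twist inside the $H$-twists.

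For part (3), use the projection formula $F^e_*(E)\otimes\mathcal O(kH)=F^e_*(E\otimes\mathcal O(p^ekH))$. Then $H^j(F^e_*B^i_X((t_0+1)H)(kH))=H^j(B^i_X((t_0+1+p^ek)H))$. Choose $t_0$ by Serre vanishing and Serre duality, applied to the finitely many bundles $B^i_X,Z^i_X$ and their duals, so that $H^j(B^i_X(mH))=0$ for $0<j<n$ whenever $|m|>t_0$; take the same for $Z^i_X$. The condition $p^e>2t_0+1$ guarantees that $|t_0+1+p^ek|>t_0$ for every integer $k$. Indeed, for $k\ge0$ the value is at least $t_0+1$, and for $k\le-1$ it is at most $t_0+1-p^e<-t_0$. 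This gives the ACM property.
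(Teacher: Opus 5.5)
\textbf{Part (1) has a gap} at exactly the step you flag: you never prove $H^0(X,B^i_X)=0$ for $i\ge 2$. By duality this is also what is needed for $H^n(X,B^{n+1-i}_X)=0$.

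\begin{itemize}
\item Your $i=1$ argument, via Frobenius on $H^1(X,\mathcal O_X)$, is correct. It does not propagate, though. For $B^{i+1}_X=\operatorname{coker}(Z^i_X\to F_*\Omega^i_X)$, the analogous comparison map $H^1(Z^i_X)\to H^1(F_*\Omega^i_X)$ is not between spaces of equal dimension, so there is no rank argument.
\item The injection $H^0(B^i_X)\hookrightarrow H^0(Z^i_X)$ that you mention says nothing by itself.
\item The fallback to a ``slope/Hodge--Witt interpretation'' is an unproved assertion, not an argument.
\end{itemize}

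The missing idea is a direct dimension count, done separately for each $i$ with no induction.
\begin{itemize}
\item Since $n\ge 2$, the ACM hypothesis (extended to all $i$ by duality, as you did) gives $H^1(X,B^i_X)=0$. So the Cartier sequence $0\to B^i_X\to Z^i_X\to\Omega^i_X\to 0$ yields $h^0(Z^i_X)=h^0(B^i_X)+h^0(\Omega^i_X)$.
\item On the other hand, $Z^i_X\subseteq F_*\Omega^i_X$, and $h^0(F_*\Omega^i_X)=h^0(\Omega^i_X)$ because $F$ is finite and $k$ is perfect. Hence $h^0(Z^i_X)\le h^0(\Omega^i_X)$.
\item Comparing the two gives $H^0(X,B^i_X)=0$ for every $1\le i\le n$.
\item Then $H^n(X,B^i_X)\cong H^0(X,B^{n+1-i}_X)^\vee=0$, and together with the intermediate vanishing this gives ordinarity.
\end{itemize}
This is the paper's proof.

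\textbf{Parts (2) and (3) are correct.} Part (3) is the paper's argument: a finite set of bad twists, the projection formula, and the condition $p^e>2t_0+1$.

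Part (2) is the Serre-dual form of the paper's proof. The paper shows $\operatorname{Ext}^1(B^1_X,\mathcal O_X)\cong H^{n-1}(X,B^1_X\otimes\omega_X)^\vee=0$. You instead show that the connecting map $H^{n-1}(X,B^1_X\otimes\omega_X)\to H^n(X,\omega_X)$ is zero. Both rest on the same vanishing $H^{n-1}(X,B^1_X(\alpha H))=0$.

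One slip in part (2): the map in your splitting criterion goes to $H^n(X,F_*\mathcal O_X\otimes\omega_X)=H^n(X,F_*(\omega_X^{\otimes p}))$, not to $H^n(X,F_*\omega_X)$. Also, the $B^1_X$/$B^n_X$ equivalence needs only Cartier duality and Serre duality, not the subcanonical hypothesis.
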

		
		Part (2) generalizes \cite[Theorem 3.4]{MR3920911} which applies when $\alpha=0$. A classification result for initialized ACM $B^1_X(aH)$ is obtained in Corollary \ref{inicor}. A classification of surfaces with ACM $B^1_X$ is given without any restriction on $\omega_X$ in Corollary \ref{ordiB1}.

   Joshi introduced the notion of an almost Ulrich bundle; see Definition \ref{almostul}. He proved $B_X^1((n-1)H)$ is almost Ulrich when $X$ is Frobenius split and $n=2$ or $3$ \cite[Theorem 5.4]{MR4206787}. When $X$ is ordinary with trivial $\Omega^1_X$, we classify the almost Ulrich twists $B^i_X(sH)$. This shows that a positive power of the Chow form of any ordinary abelian variety with $n \le 3$ is determinantal.

\begin{theorem} [Corollary \ref{acmabelianresult}, Corollary \ref{almostulrichB^i_X}]
		Let $X$ be an ordinary smooth projective variety of dimension $n$ with trivial cotangent bundle, and let $H$ be a very ample divisor on $X$.  For all integers $i,s,a,e$ with $1\leq i\leq n$ and $e\geq1$, the following hold:
		\begin{enumerate}
			\item [(1)]   $B_X^i$ is ACM \wrt $H$. Moreover, $B_X^i(sH)$ is almost Ulrich \wrt $H$ if and only if $n-1\leq s\leq2$. In particular, when $n \ge 4$, no twist $B_X^i(sH)$ is almost Ulrich.
			
			\item [(2)] Assume that $n\ge2$. Then $Z_X^i$ is exactly $(n-1)$-away ACM \wrt $H$.
			Moreover, $F_*^e(Z_X^i(aH))$ is ACM \wrt $H$ if and only if  $p^e \nmid a$.
			
			\item [(3)] $F^e_*\bigl(Z_X^i(aH)\bigr)$ is almost Ulrich \wrt $H$ if and only if  $p^e(n-1)<a<2p^e$. Consequently, when $n \geq 2$, this bundle is almost Ulrich if and only if $n=2$ and $p^e<a<2p^e$.
		\end{enumerate}
	\end{theorem}

	The proof of the theorem involves showing that the de Rham complex $F_*\Omega_X^\bullet$ decomposes as $\bigoplus_\alpha C_\alpha^\bullet$, where $C_\alpha^i\cong L_\alpha\otimes_k\bigwedge^i V$ for a $k$-vector space $V$ and certain line bundles $L_\alpha$. The results then follow by studying the cohomology of the line bundles appearing in this decomposition.

	Now we focus on classification of polarized varieties with Ulrich twists of $B_X^i$ and $Z_X^i$. This is not known even for $B_X^1$ when $\dim X\geq2$. Our main results from Section \ref{four} are summarized below.

	\begin{theorem}
		Let $X$ be a smooth projective variety of dimension $n \ge 1$ over $k$, and let $H$ be a very ample divisor. Let $i, e, a$ be integers with $1 \le i \le n$ and $e \ge 1$. Then the following statements hold:
		\begin{enumerate}
			\item [(1)] Assume that $n=2$. Then $B_{e}(aH)$  is Ulrich with respect to $H$ if and only if 
			\[
			a=1, \qquad e=1, \qquad p=2, \qquad H^2=1,
			\]
			and $(X,H)\cong\bigl(\mathbb P^2,\mathcal O_{\mathbb P^2}(1)\bigr)$.
			
			\item [(2)] Assume that $n \ge 3$. If $B_X^i(aH)$ is Ulrich \wrt $H$, then necessarily
			\[
			n=3,\qquad p=2,\qquad i=2,\qquad a=2.
			\]
			
			\item [(3)]  The bundle $Z_X^i(aH)$ is not Ulrich with respect to $H$.
			\end{enumerate}
		\end{theorem}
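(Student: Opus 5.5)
Throughout, I use the characterization of Ulrich bundles on $(X,H)$ with $H$ very ample and $\deg X = H^n = d$: a rank $r$ bundle $E$ is Ulrich iff $H^j(X,E(-tH))=0$ for all $j$ and $1\le t\le n$. A consequence I will use repeatedly is that $E$ is then globally generated and satisfies
\[
\chi(E(tH)) = r d \binom{t+n}{n}.
\]
In particular $c_1(E)\cdot H^{n-1} = \tfrac{r}{2}\bigl(K_X+(n+1)H\bigr)\cdot H^{n-1}$. The strategy is to feed the explicit Chern data of $F_*^e$ and of the de Rham complex into these numerical constraints, and then use vanishing to pin down $(X,H)$.

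\textbf{Part (1), $n=2$.} Here $B_e = F^e_*\mathcal O_X/\mathcal O_X$ has rank $p^{2e}-1$. Grothendieck–Riemann–Roch for the finite flat map $F^e$ gives
\[
c_1(F^e_*\mathcal O_X) = -\tfrac{(p^{2e}-p^e)}{2}K_X .
\]
So $c_1(B_e(aH)) = -\tfrac{p^{e}(p^e-1)}{2}K_X + a(p^{2e}-1)H$. Equating with the Ulrich value $\tfrac{p^{2e}-1}{2}(K_X+3H)\cdot H$ gives a linear relation between $K_X\cdot H$ and $H^2$. Comparing $\chi$ at two values of $t$ gives a second relation involving $K_X^2$ and $\chi(\mathcal O_X)$.

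Independently, the vanishings $H^0(B_e(aH-H))=H^0(B_e(aH-2H))=0$ and $H^2$-vanishing, combined with the sequence $0\to\mathcal O_X(m)\to F^e_*\mathcal O_X(p^em)\to B_e(m)\to0$, translate into statements about $H^j(\mathcal O_X(p^e m H))$ versus $H^j(\mathcal O_X(mH))$. Taking $m=a-1,a-2$, I expect to force $a=1$. Then Ulrich for $t=1,2$ gives $H^1(\mathcal O_X)=0$, $H^2(\mathcal O_X(-p^eH))\cong H^2(\mathcal O_X(-H))$, and similar identities. Serre duality then says $h^0(K_X+p^eH)=h^0(K_X+H)$ small, which forces $K_X+3H$ anti-effective-ish. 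The numerical relation should become $(p^e-1)(K_X\cdot H) = $ something forcing $K_X\cdot H=-3H^2$, $H^2=1$, and $p^e=2$. Then Kobayashi–Ochiai / $\Delta$-genus zero gives $(\mathbb P^2,\mathcal O(1))$. The converse should follow from the known splitting $F_*\mathcal O_{\mathbb P^2}=\mathcal O\oplus\mathcal O(-1)^{3}$ in $p=2$, which gives $B_1(1)=\mathcal O^3$; this is Ulrich on a plane.

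\textbf{Part (2), $n\ge3$.} I use the exact sequences
\[
0\to Z^{i-1}_X\to F_*\Omega^{i-1}_X\to B^i_X\to0, \qquad 0\to B^i_X\to Z^i_X\xrightarrow{C}\Omega^i_X\to0 .
\]
These give $c_1$ and rank of $B^i_X$ recursively: the rank is $\sum_{j<i}(-1)^{i-1-j}\binom nj p^n$ up to correction, computed via $F_*$ ranks. The Ulrich slope equation then gives a Diophantine constraint in $p$, $i$, $n$, $a$, with $K_X\cdot H^{n-1}$ and $H^n$ as unknowns.

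The vanishing $H^0(B^i_X(aH-H))=0$ together with global generation of $B^i_X(aH)$ is the key inequality. Global generation requires sections, which come from $H^0(F_*\Omega^{i-1}(p(a)H))$. Combined with $H^n(B^i_X((a-n)H))=0$ (via duality $B^{i\vee}_X\cong B^{n-i+1}_X\otimes\omega_X^{-1}$ type identities), this should bound $a$ between roughly $n/2$ and $2$. That forces $n=3$, $a=2$; the symmetry $i\leftrightarrow n+1-i$ from duality forces $i=2$, and the slope equation forces $p=2$.

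\textbf{Part (3).} Since $Z^i_X$ contains $B^i_X$ with quotient $\Omega^i_X$, I would show that some $H^j(Z^i_X(aH-tH))$ with $1\le t\le n$ is nonzero. The natural witness is the Cartier quotient: $H^n(\Omega^i_X((a-t)H))$ or the $p$-divisibility mismatch. Twists $F_*(\Omega^i\otimes \mathcal O(p m H))$ only see multiples of $p$, so the Ulrich vanishing range cannot hold for all $t$. Concretely, I would compare the rank–$c_1$ slope condition, which fails because $c_1(Z^i_X)$ is asymmetric, as in the curve case $Z^1=F_*\omega_X$.

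\textbf{Main obstacle.} I expect the main obstacle to be part (2), namely turning the vanishing and global generation into the sharp bounds on $a$ and $i$.
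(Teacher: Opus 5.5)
There is a genuine gap. The ingredient that drives all three parts is missing from your plan: a nonvanishing statement in low twist, $H^0\bigl(X,B^i_X(mH)\bigr)\neq0$ whenever $pm\ge i+1$.

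The paper proves this explicitly. Choose $x_j=s_j/s_0$ with $dx_1\wedge\cdots\wedge dx_i\neq0$ generically. Then $s_0^{pm}\,x_i\,dx_1\wedge\cdots\wedge dx_{i-1}$ extends to the global section $s_is_0^{pm-i-1}\varphi_H^*\Theta$ of $\Omega^{i-1}_X(pmH)$, where $\Theta\in H^0(\Omega^{i-1}_{\mathbb P^N}(i))$ is the Euler-type form. Its derivative for the canonical connection on $\mathcal O_X(pmH)=F^*\mathcal O_X(mH)$ is a nonzero section of $B^i_X(mH)$. Numerics cannot replace this lemma.

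\textbf{Part (1).} First correct a sign: Grothendieck--Riemann--Roch gives $c_1(F^e_*\mathcal O_X)=+\tfrac{q(q-1)}{2}K_X$ with $q=p^e$. With your sign, the known example $B_1(1)\cong\mathcal O_{\mathbb P^2}^{\oplus 3}$ in $p=2$ would violate the Ulrich slope identity. With the correct sign, the slope identity and the vanishing of $\chi(B_e(sH))=\tfrac12 s\bigl[(q^2-1)sH^2-(q-1)K_X\cdot H\bigr]$ at $s=a-1,a-2$ give only $a\in\{1,2\}$ and $K_X\cdot H=(2a-3)(q+1)H^2$. Note that no $K_X^2$ or $\chi(\mathcal O_X)$ term survives. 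Excluding $a=2$ needs $H^0(B_e(H))\neq0$. Your idea of reading this off $0\to\mathcal O_X(H)\to F^e_*\mathcal O_X(qH)\to B_e(H)\to0$ would work here, since $s\mapsto s^q$ is not onto $H^0(\mathcal O_X(qH))$. But you leave it as an expectation, and no such two-term sequence exists for $B^i_X$ with $i\ge2$. With $a=1$, adjunction on a smooth $C\in|H|$ gives $2g(C)-2=(K_X+H)\cdot H=-qH^2\ge-2$, hence $q=2$ and $H^2=1$. Your intermediate claims, such as $H^1(\mathcal O_X)=0$, do not follow from the Ulrich vanishings and are not needed.

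\textbf{Part (2).} Global generation of $B^i_X(aH)$ produces sections of $B^i_X(aH)$, not of $B^i_X((a-1)H)$, so it gives no bound on $a$. Instead:
\begin{itemize}
\item the lemma and $H^0(B^i_X((a-1)H))=0$ give $p(a-1)\le i$;
\item the lemma applied to the Ulrich dual $B^i_X(aH)^\vee\otimes\omega_X((n+1)H)\cong B^{n+1-i}_X((n+1-a)H)$ (by Cartier duality) gives $p(n-a)\le n+1-i$.
\end{itemize}
Adding, $2(n-1)\le p(n-1)\le n+1$, so $n=3$ and $p=2$. Both inequalities are then equalities, so $i=2(a-1)$, forcing $i=a=2$. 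The symmetry $i\leftrightarrow n+1-i$ does not force $i=2$, since it merely swaps $(i,a)$ with $(n+1-i,n+1-a)$. No slope equation is involved.

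\textbf{Part (3).} Your concrete plan fails, because the rank--$c_1$ condition does not obstruct. For curves, $Z^1_X=F_*\omega_X$ has degree $(p+1)(g-1)$. The Ulrich numerics $\chi(Z^1_X((a-1)H))=0$ reduce to $g-1=p(1-a)\deg H$. This holds for $a=0$ on a smooth plane curve of degree $2p+3$.

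The obstruction is cohomological and again rests on the lemma. Restrict to a general complete-intersection curve $C$, where $E|_C$ stays Ulrich. Compose the Cartier surjection $Z^i_X\twoheadrightarrow\Omega^i_X$ with $\Omega^i_X|_C\twoheadrightarrow\bigwedge^{i-1}N\otimes\omega_C$, where $N=\mathcal O_C(-h)^{\oplus(n-1)}$. This gives $E|_C(-h)\twoheadrightarrow\omega_C((a-i)h)^{\oplus\binom{n-1}{i-1}}$, so $H^1(C,\omega_C((a-i)h))=0$ and hence $a\ge i+1$. Then $p(a-1)\ge pi\ge i+1$, and $0\neq H^0(B^i_X((a-1)H))\subseteq H^0(Z^i_X((a-1)H))=0$ is the contradiction.

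If you use the Cartier quotient on $X$ itself, via $H^n(\Omega^i_X((a-n)H))=0$ and Serre duality, you get only $a\ge i$ (sharp on $\mathbb P^n$). That is too weak, for example when $i=1$.
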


		When $n=2$, from part (1), we derive that $B_X^2(aH)$ is Ulrich with respect to $H$ if and only if $a=2$, $p=2$, $H^2=1$, and $(X,H) \cong \bigl(\mathbb P^2,\mathcal O_{\mathbb P^2}(1)\bigr)$ in Corollary \ref{surfaceB2}. Combining $(1)$ and $(2)$, for $n \ge 2$, we get $B^1_X(aH)$ is Ulrich only when $a=1$ and $(X,H)\cong\bigl(\mathbb P^2,\mathcal O_{\mathbb P^2}(1)\bigr)$ in characteristic $2$. Proposition \ref{nonvanishingexactforms} on nonvanishing of sections of $B_X^i(mH)$ is crucially used in the proofs of the above results.

	In Proposition \ref{P3example}, we show that $B_{\mathbb P^3}^2(2)$ is Ulrich with respect to $\mathcal O_{\mathbb P^3}(1)$ for $p=2$, giving an example for part (2). However, we do not know whether $\bigl(\mathbb P^3,\mathcal O_{\mathbb P^3}(1)\bigr)$ is the only polarized threefold in $p=2$ with Ulrich  $B_X^2(2H)$; see Lemma \ref{P^3lemma}.
	
	A more challenging problem is to classify triples $(X, H, L)$ such that $B_X^i \otimes L$ is Ulrich with respect to $H$ for some line bundle $L$ on $X$. Set $M:=L\otimes H^{-1}$. For an elliptic curve $E$, $B_E^1 \otimes L$ is Ulrich with respect to $H$ if and only if $\deg M=0$ and either $M^{\otimes p}\not\cong\mathcal O_E$ or both $M\cong\mathcal O_E$ and $E$ is ordinary; see Remark \ref{sec5:rem1}.

\begin{theorem} [Theorem \ref{sec5:thm1}, Corollary \ref{sec5:cor1}]
	Let  $A$ be an abelian variety of dimension $n\geq2$ over $k$. There exist a very ample line bundle $H$ and a line bundle $L$ on $A$ such that $B_A^1\otimes L$ is Ulrich with respect to $H$ if and only if $A$ is isogenous to a product of $n$ elliptic curves.
\end{theorem}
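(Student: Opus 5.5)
The plan is to reduce the Ulrich condition on $B^1_A\otimes L$ to a condition on line bundles, and then translate it into a statement about $\mathrm{NS}(A)$ and $\mathrm{End}^0(A)$. Recall that $E$ is Ulrich with respect to $H$ if and only if $H^\bullet(A,E(-jH))=0$ for $1\le j\le n$.

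\emph{Reduction to line bundles.} Tensor the sequence $0\to\mathcal O_A\to F_*\mathcal O_A\to B^1_A\to 0$ with $L(-jH)$ and use the projection formula $F_*\mathcal O_A\otimes N\cong F_*(N^{p})$. Writing $N_j:=L(-jH)$, this gives
\[
0\to N_j\to F_*(N_j^{p})\to B^1_A\otimes L(-jH)\to 0 .
\]
So $B^1_A\otimes L$ is Ulrich exactly when $F^*\colon H^i(N_j)\to H^i(N_j^{p})$ is an isomorphism for all $i$ and all $1\le j\le n$.

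Taking Euler characteristics gives $\chi(N_j^p)=p^n\chi(N_j)$. Hence this condition forces $\chi(N_j)=0$, i.e. $N_j$ is degenerate, for $j=1,\dots,n$. Conversely, by Mumford's vanishing theorem, if $Y=K(N)^0$ is positive dimensional and $N|_Y$ is nontrivial, then $H^\bullet(N)=0$. Applied to $N_j$ and $N_j^p$, this gives an easy sufficient condition for Ulrichness.

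\emph{Necessity.} By Riemann--Roch on abelian varieties, $\chi(L-tH)=(L-tH)^n/n!$ is a polynomial in $t$ of degree $n$ with leading coefficient $(-1)^nH^n/n!\neq0$. It vanishes at $t=1,\dots,n$, so these are all of its roots, each simple.

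Equivalently, consider the symmetric element $\psi:=\phi_H^{-1}\phi_L\in\mathrm{End}^0(A)$. Its characteristic polynomial, i.e. that of its rational representation, is determined by $\chi(L-tH)^2/\chi(H)^2$ up to normalization. So $\psi$ has the $n$ distinct rational eigenvalues $1,\dots,n$, each with analytic multiplicity one. The minimal polynomial of $\psi$ then splits into distinct linear factors over $\mathbb Q$. The resulting orthogonal idempotents in $\mathrm{End}^0(A)$ decompose $A$ up to isogeny as $A_1\times\dots\times A_n$, where $A_j$ is the image of the eigen-idempotent for eigenvalue $j$. Each $A_j$ is nonzero, since the eigenvalue actually occurs, and $\sum\dim A_j=n$. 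Hence every $A_j$ is an elliptic curve.

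I expect the main obstacle to be this step: making rigorous the passage from the analytic-representation eigenvalues to the isogeny decomposition, and in particular checking that the multiplicity of each eigenvalue equals $\dim A_j$. The route I would try first is the standard theory of symmetric elements of $\mathrm{End}^0(A)$ relative to the Rosati involution of $H$, as in Mumford's \emph{Abelian Varieties}, \S 21.

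\emph{Sufficiency.} Let $f\colon A\to E_1\times\dots\times E_n$ be an isogeny, with projections $q_j\colon A\to E_j$. Choose divisors $h_j$ of degree $d_j$ on $E_j$ and set $H':=\sum_j q_j^*h_j$, and take $H:=mH'$ with $m\ge3$, so that $H$ is very ample. Put
\[
L:=\sum_j j\,m\,q_j^*h_j+\sum_j q_j^*P_j,
\]
where each $P_j\in\mathrm{Pic}^0(E_j)$ is non-torsion, which is possible over $k$ algebraically closed once we avoid countably many points (or take $k$ uncountable or pick $P_j$ of large order prime to $p$). Then $N_j$ is the pullback of a bundle whose $j$-th factor is $P_j$. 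So $K(N_j)^0$ contains the connected preimage of $E_j$, and on it $N_j$ and $N_j^p$ restrict to pullbacks of the nontrivial bundles $P_j$ and $P_j^p$; here I need to check that the restriction stays nontrivial after pullback, which holds if $P_j$ is chosen outside the finite kernel of $f^*$ on that factor. Mumford's vanishing then kills all cohomology of $N_j$ and $N_j^p$, so $B^1_A\otimes L$ is Ulrich.
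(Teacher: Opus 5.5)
Your proof is correct. For necessity it takes a genuinely different route from the paper.

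\textbf{Comparison of the necessity arguments.} The paper works only with polarization homomorphisms. It sets $K_j=(\ker\phi_{L\otimes H^{-j}})^0_{\mathrm{red}}$ and gets $\dim K_j>0$ from $\deg\phi_{M_j}=\chi(M_j)^2=0$. From $\phi_L\iota_j=j\,\phi_H\iota_j$ and the symmetry of $\phi_L,\phi_H$ it derives $\iota_r^\vee\phi_H\iota_j=0$ for $r\neq j$. Hence $s^\vee\phi_H s$ is a diagonal isogeny, so the addition map $s\colon\prod_j K_j\to A$ has finite kernel, and a dimension count finishes.

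You instead work with the Rosati-symmetric element $\psi=\phi_H^{-1}\phi_L\in\mathrm{End}^0(A)$. From $\deg\phi_{L-tH}=\chi(L-tH)^2$ you read off $P_\psi(t)=\chi(L-tH)^2/\chi(H)^2=\prod_j(t-j)^2$. Your $A_j$ are the eigen-subvarieties of $\psi$ and coincide with the paper's $K_j$. The paper's orthogonality computation is a hands-on proof that distinct eigenspaces of a symmetric element are $\phi_H$-orthogonal.

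Your route is more conceptual: it explains why the numerical conditions $(L-jH)^n=0$ force a splitting. It relies on Mumford \S\S 19--21 (degree on $\mathrm{End}^0$, characteristic polynomial, Cayley--Hamilton). The paper's route is elementary and produces the explicit isogeny $s$, which it reuses in the converse and in Remark~\ref{sec5:rem2}.

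\textbf{First correction: the step you flag is unnecessary.} ``Analytic multiplicity'' has no meaning in characteristic $p$, but you need neither it nor semisimplicity. Cayley--Hamilton gives $P_\psi(\psi)=0$. Also $\psi-j$ is not invertible in $\mathrm{End}^0(A)$, since $\deg(\psi-j)=P_\psi(j)=0$. Therefore the minimal polynomial is $\prod_j(t-j)^{a_j}$ with every $a_j\geq 1$. The Chinese remainder theorem then gives $n$ nonzero orthogonal idempotents summing to $1$. Each $A_j$ is nonzero and $\sum_j\dim A_j=n$, so $\dim A_j=1$ for all $j$, with no multiplicity count needed. Rosati positivity would additionally give $a_j=1$, but you do not need it.

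\textbf{Second correction: the choice of $P_j$ in the converse.} Your converse builds $L$ directly as $f^*$ of a box product twisted by $\boxtimes_j P_j$. This is more direct than the paper, which first builds $L_0$, invokes the numerical half of Theorem~\ref{sec5:thm1} to obtain the $K_j$, and then uses surjectivity of $s^\vee$ to prescribe $M_j|_{K_j}$. However, over $k=\overline{\mathbb F}_p$ every point of $\mathrm{Pic}^0(E_j)$ is torsion, so ``non-torsion $P_j$, avoiding countably many points'' is not available in general. Use your fallback instead. Let $E_j'\subseteq A$ be the connected preimage of the $j$-th factor and $g_j\colon E_j'\to E_j$ the induced isogeny. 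Take $P_j$ of prime order $\ell\neq p$ with $\ell\nmid\deg g_j$. Then $g_j^*P_j$ and $g_j^*P_j^{p}$ are both nontrivial. These are the restrictions of $N_j$ and $N_j^{p}$ to $E_j'\subseteq\ker\phi_{N_j}\subseteq\ker\phi_{N_j^{p}}$, so the vanishing theorem applies to both bundles.
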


When $B_A^1\otimes L$ is Ulrich for some $L$ and $H$, the above result gives an isogeny $s:\prod_{j=1}^nK_j\longrightarrow A$.  In Remark \ref{sec5:rem2}, we describe the pullbacks $s^*H$ and $s^*L$ as external tensor products of line bundles on the elliptic curves $K_j$. As a consequence of the above theorem, we prove:
\begin{cor} [Corollary \ref{sec5:cor2}]
	Let $A$ be an abelian variety of dimension $n\geq2$. If the Picard number $\rho(A)<n$, then there do not exist a line bundle $L$ and a very ample divisor $H$ on $A$ such that $B_A^1\otimes L$ is Ulrich with respect to $H$.
	\end{cor}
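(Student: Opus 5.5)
The plan is to derive the corollary from the preceding theorem (Theorem \ref{sec5:thm1}, Corollary \ref{sec5:cor1}) by arguing through Picard numbers. We argue by contradiction. Suppose there exist a line bundle $L$ and a very ample divisor $H$ on $A$ with $B_A^1\otimes L$ Ulrich with respect to $H$. The theorem then says that $A$ is isogenous to a product of $n$ elliptic curves, so there is an isogeny $s:\prod_{j=1}^n K_j\longrightarrow A$ as described after the theorem. The corollary will follow once we show this forces $\rho(A)\ge n$.

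\textbf{Step 1: $\rho$ is an isogeny invariant.} For an isogeny $s:B\to A$ there is also an isogeny $t:A\to B$ with $s\circ t=[m]_A$ for some integer $m\geq1$. Since $[m]^*$ acts on $NS(A)_{\mathbb Q}$ as multiplication by $m^2$ (on symmetric classes), it is injective there. Hence $t^*\circ s^*$ is injective, and so $s^*:NS(A)_{\mathbb Q}\to NS(B)_{\mathbb Q}$ is injective. The symmetric argument with $t$ gives the reverse inequality, so $\rho(A)=\rho(B)$ with $B=\prod_j K_j$.

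\textbf{Step 2: lower bound on $\rho(B)$.} Let $\pi_j:B\to K_j$ be the projections and $P_j$ a point of $K_j$. The classes $\pi_j^*[P_j]$, $j=1,\dots,n$, are linearly independent in $NS(B)_{\mathbb Q}$. To check this, intersect with the curves $C_l=\{0\}\times\cdots\times K_l\times\cdots\times\{0\}$: we have $\pi_j^*[P_j]\cdot C_l=\delta_{jl}$. Therefore $\rho(B)\ge n$, and by Step 1 $\rho(A)\geq n$, contradicting $\rho(A)<n$.

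\textbf{Alternative via the explicit pullbacks.} One could instead use Remark \ref{sec5:rem2}, where $s^*H$ is an external tensor product $\boxtimes_j H_j$ with each $\deg H_j>0$. This route seems unnecessary.

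\textbf{Main obstacle.} The only step needing care is Step 1, the invariance of $\rho$ under isogeny. It is standard, and I would cite it, for instance Mumford's \emph{Abelian Varieties}, rather than reprove the action of $[m]^*$ on $NS$.
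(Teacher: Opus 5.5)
Your proof is correct and follows the paper's argument: apply Theorem \ref{sec5:thm1} to get an isogeny $s:\prod_{j=1}^n K_j\to A$, then combine isogeny invariance of $\rho$ with $\rho\bigl(\prod_j K_j\bigr)\ge n$. The paper cites both facts from the literature, whereas you prove them directly. One small point: the inequality you actually need is $\rho(B)\le\rho(A)$, which comes from injectivity of $t^*$ via $t\circ s=[m]_B$ (your ``symmetric argument''), not from the injectivity of $s^*$ that you spell out; also, $[m]^*$ is multiplication by $m^2$ on all of $NS(A)$, not only on symmetric classes.
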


	\section{Preliminaries}
	\label{second}
	
	Throughout, the base field $k$ is assumed to be algebraically closed  of characteristic $p>0$. Let $X$ be a smooth projective variety over $k$. The absolute Frobenius morphism $F: X \longrightarrow X$ is the identity on the topological space $|X|$ of the variety $X$. On an affine open subset $U=\spec A \subset X$, the ring homomorphism induced by $F$ is given by
	\[
	F^\#:\cO_X(U)\longrightarrow\cO_X(U), \qquad a\longmapsto a^p.
	\]
	Since $\text{char}(k)=p$, this defines a ring homomorphism.

	\subsection{Ordinary and Frobenius split varieties} The morphism $F$ is a finite purely inseparable morphism of degree $p^{\text{dim}(X)}$. Since $X$ is smooth, $F$ is a flat morphism. Consider the following \ses 
	\begin{equation} \label{Fsplitseq}
		0 \longrightarrow \cO_X \longrightarrow F_* \cO_X \longrightarrow B^1_X \longrightarrow 0,
	\end{equation}
	where the cokernel $B^1_X$ is the sheaf of locally exact $1$-forms. Thus, the bundle $B^1_X$ has rank $p^{\dim X}-1$. 
		\begin{definition} (\cite[Definition 2]{mehta1985frobenius})
		A smooth, projective variety $X$ over $k$ is called Frobenius split if the \ses \eqref{Fsplitseq} splits in the category of $\cO_X$-modules.
	\end{definition}
	
		Recall the definition of the sheaf of exact forms:
	\begin{equation}
		B^i_X=\im\big(F_*d:F_* \Omega^{i-1}_X \longrightarrow F_* \Omega^{i}_X\big)
	\end{equation}
	and that of the sheaf of closed forms 
	\begin{equation} \label{closedform}
		Z^i_X=\ker\big(F_*d:F_* \Omega^{i}_X \longrightarrow F_* \Omega^{i+1}_X\big),
	\end{equation}
	where $d$ is the exterior derivative. From these definitions, we get the following \ses of $\cO_X$-modules
	\begin{equation} \label{defseq}
		0 \longrightarrow Z^i_X \longrightarrow F_* \Omega^{i}_X \longrightarrow B^{i+1}_X \longrightarrow 0.
	\end{equation}
	The Cartier operator defines the isomorphism
	\begin{equation} \label{carisom}
		C^i\,:\, Z^i_X/B^i_X \xrightarrow{\sim} \Omega^i_X.
	\end{equation}
	This isomorphism induces the following \ses of $\cO_X$-modules
	\begin{equation} \label{cartierseq}
		0 \longrightarrow B^i_X \longrightarrow Z^i_X \longrightarrow \Omega^i_X \longrightarrow 0.
	\end{equation}
	Let $\dim X\,=\,n$. Then from \eqref{closedform}, we obtain $Z^n_X\,=\,F_* \Omega^{n}_X\,=\,F_* \omega_X$, where $\omega_X$ is the canonical line bundle of $X$. Substituting $i=n$ in \eqref{cartierseq}, we get the following \ses
	\begin{equation} \label{Cartieroperator}
		0 \longrightarrow B^n_X \longrightarrow F_* \omega_X \xrightarrow{C^n} \omega_X \longrightarrow 0.
	\end{equation}
	Substituting $i=0$ in \eqref{defseq}, and using $Z^0_X=\cO_X$, we recover \eqref{Fsplitseq}. Recall the notion of ordinary variety from \cite[Definition 4.12]{illusie1983suites}:
	\begin{definition} [Ordinary variety] 
			\label{ordinarydef}
		A smooth, projective variety $X$ defined over $k$ is called an ordinary variety if $H^i(X, B^j_X)=0$ for all $i \ge 0$ and for all $j>0$.
	\end{definition}

	\subsection{Duality induced by the Cartier operator} The Cartier operator $C^n$ in \eqref{Cartieroperator} defines a perfect $\cO_X$-bilinear pairing for all $ 0 \le j \le n$:
	\begin{equation} \label{laste25}
		F_* \Omega^j_X \otimes F_*\Omega^{n-j}_X \longrightarrow \Omega^n_X=\omega_X, \qquad (w_1,w_2)\longmapsto C^n(w_1 \wedge w_2),
	\end{equation}
	where $C^n:F_* \Omega^n_X=Z^n_X \longrightarrow \Omega^n_X$ is the Cartier morphism. This induces a perfect pairing; see \cite[proof of Lemma~1.1, p.~194]{mehta1987varieties}:
	\[
	B_X^j\otimes B_X^{n-j+1}\longrightarrow\omega_X.
	\]
	Consequently, for $1 \le j \le n$, we obtain
	\begin{equation} \label{cartierduality}
		B_X^j\cong(B_X^{n-j+1})^\vee\otimes\omega_X.
	\end{equation}

	\subsection{ACM and Ulrich bundles}
	We recall some standard notions from the theory of \ul bundles. 
	Let $X$ be a projective variety of dimension $n$. Let $H$ be a very ample divisor on $X$ defining an embedding $X \longhookrightarrow \bP^{N}$.
	\begin{definition} [ACM vector bundles]
		\label{uldefin}
		Let $E$ be a vector bundle on $X$. Then $E$ is called an arithmetically Cohen-Macaulay (ACM) bundle \wrt $H$ if $H^i(X, E(tH))=0$ for all $1 \le i \le (n-1)$, and for all $t \in \bZ$.
	\end{definition} 
	\ul vector bundles have several equivalent characterizations, as follows:
	
	\begin{definition} \cite[Proposition 2.1]{eisenbud2003resultants}\label{Eisenbuddefinition} \cite[Theorem 2.3]{beauville2018introduction}
		\label{ulprop}
		Let $E$ be  a vector bundle on $X$.  Then $E$ is called Ulrich \wrt $H$ if one of the following equivalent conditions is satisfied: 
		\begin{itemize}
			\item [(D1)] The cohomology $H^\bullet(X, E(-jH)) =0$ for $1 \le j \le n$.
			\item [(D2)] If $\pi : X \rightarrow \bP^{n}$ is a finite linear projection, the vector bundle $\pi_* E$ is a trivial bundle on $\bP^{n}$. 
		\end{itemize}
		\end{definition}

	The following facts are standard. We will use them in the subsequent sections. 
	
	Let $X$ be a smooth projective variety of dimension $n \ge 1$. Let $H$ be a very ample divisor on $X$. Let $C$ be a smooth projective complete intersection curve cut out by $(n-1)$ general members of $|H|$. Let $d=H^n$ and $g$ be the genus of the curve $C$. 
	
	\begin{lemma} \label{ulrichdual}
		 Assume that $E$ is an Ulrich bundle on $X$ \wrt $H$. Then the following hold:
		\begin{enumerate}
			\item [(1)] The bundle $E^{\vee} \otimes \omega_X((n+1)H)$ is also Ulrich \wrt $H$.
		\item [(4)] The restriction $E\vert_C$ is Ulrich \wrt $H\vert_{C}$.
			\end{enumerate}
		
	\end{lemma}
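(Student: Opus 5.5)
Both parts are checked directly against condition (D1) of Definition \ref{Eisenbuddefinition}, i.e.\ vanishing of $H^\bullet(X,E(-jH))$ for $1\le j\le n$.

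For (1), set $E':=E^\vee\otimes\omega_X((n+1)H)$ and fix $1\le j\le n$. Serre duality on the smooth projective $X$ gives
\[
H^i\bigl(X,E'(-jH)\bigr)=H^i\bigl(X,E^\vee\otimes\omega_X((n+1-j)H)\bigr)\cong H^{n-i}\bigl(X,E(-(n+1-j)H)\bigr)^\vee .
\]
As $j$ runs over $1,\dots,n$, so does $j':=n+1-j$. Since $E$ is Ulrich, (D1) gives $H^{n-i}(X,E(-j'H))=0$ for every $i$. Hence $E'$ satisfies (D1) and is Ulrich. This step is purely formal.

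For (4), I argue by induction on the number of hyperplane sections. It suffices to show that if $Y\in|H|$ is a general smooth member (smooth by Bertini, since $H$ is very ample), then $E|_Y$ is Ulrich with respect to $H|_Y$ on the $(n-1)$-dimensional $Y$. Iterating $n-1$ times then reaches $C$; general members stay smooth at each stage by Bertini.

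Twisting the restriction sequence $0\to\mathcal O_X(-H)\to\mathcal O_X\to\mathcal O_Y\to0$ by $E(-jH)$ gives
\[
0\longrightarrow E(-(j+1)H)\longrightarrow E(-jH)\longrightarrow E|_Y(-jH|_Y)\longrightarrow 0 .
\]
For $1\le j\le n-1$, both $j$ and $j+1$ lie in $[1,n]$. So (D1) for $E$ kills the cohomology of the first two terms, and the long exact sequence forces $H^\bullet(Y,E|_Y(-jH|_Y))=0$. This is exactly (D1) for $E|_Y$ on $Y$ of dimension $n-1$.

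The only point to watch is this index range: the required range $1\le j\le n-1$ on $Y$ is precisely what the available range $1\le j\le n$ on $X$ supports via $j$ and $j+1$. This is the step I would check most carefully, though it works out. Alternatively, one could use (D2): a finite linear projection of $C$ to $\mathbb P^1$ can be obtained by restricting a projection $X\to\mathbb P^n$ to the preimage of a line. Base change then shows $\pi_*E|_C$ is trivial, but the cohomological route above avoids flatness and base-change bookkeeping.
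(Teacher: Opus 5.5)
Your proposal is correct and follows the paper's proof. Part (1) is the same Serre duality computation with $j \mapsto n+1-j$. For part (4), the paper only invokes ``successive hyperplane restriction'' and cites Beauville's argument, which is exactly the induction on the twisted restriction sequence that you write out, including the index check that $j$ and $j+1$ lie in $[1,n]$ for $1\le j\le n-1$.
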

	\begin{proof}
		For $1\leq t\leq n$, and for all $q$, Serre duality gives
		\[
		H^q\bigl(X,E^\vee\otimes\omega_X((n+1-t)H)\bigr) \cong H^{n-q}\bigl(X,E(-(n+1-t)H)\bigr)^\vee=0,
		\]
		because $1\leq n+1-t\leq n$. The bundle $E^{\vee} \otimes \omega_X((n+1)H)$ is called the Ulrich dual of $E$.
		
By successive hyperplane restriction, $E|_C$ is Ulrich
		with respect to $H\vert_{C}$; see \cite[Section 3, (3.4), p. 30]{beauville2018introduction},
		whose argument applies in arbitrary characteristic.
		\end{proof}
	
	For further background on \ul bundles, we refer to \cite{MR4409698,beauville2018introduction}.
	
	We will now recall the notion of almost \ul bundles, which are both ACM and weakly Ulrich. Let $X$ be a smooth projective variety of dimension $n$ defined over $k$. Let  $H$ be a very ample divisor on $X$. 
	\begin{definition} \label{almostul} \cite[Section 2, p. 3]{MR4206787}
		A vector bundle $E$ on $X$ is called almost \ul \wrt $H$ if the following conditions are satisfied:
		\begin{itemize}
			\item [(1)] $H^n(X, E(-mH))=0$ for all integers $m \le (n-1)$.
			\item  [(2)] $H^j(X, E(mH))=0$ for $1 \le j \le n-1$ and for all $m \in \bZ$ (i.e., $E$ is ACM).
			\item  [(3)] $H^0(X, E(-mH))=0$ for all integers $m \ge 2$.
		\end{itemize}
	\end{definition}
	
	We now recall the following generalization of ACM bundles, called  $\ell$-away ACM bundles. 
	The study of $\ell$-away ACM bundles is motivated by weakly Ulrich and supernatural bundles, which can have nonvanishing intermediate cohomology.
	
\begin{definition} (\cite[Definition 1.1]{gawron2024ℓ}, \, \cite[Definition 2.9]{bhattacharya2026})
		\label{lawayacm}
		Let $X$ be a smooth projective variety over $k$ and let $H$ be a very ample divisor on $X$ giving the embedding $X \subset  \bP^N$. A vector bundle $E$ on $X$ is called $\ell$-away ACM with respect to $H$ if there are exactly $\ell$ pairs $(i, t) \in \bZ^{2}$ with $0 < i < \text{dim}(X)$ such that $H^i(X, E \otimes \cO_X(H)^{\otimes t}) \ne 0$.
	\end{definition}

	\section{Some auxiliary results}
	We work over an algebraically closed field $k$ of characteristic $p>0$. Let $X$ be a smooth projective variety of dimension $n$ over $k$, and let $H$ be a very ample divisor. We establish a criterion for the nonvanishing of $H^0\big(X,B_X^i(mH)\big)$, and construct a canonical filtration of the sheaves $B_e$. These results will be used in subsequent sections.
	
For a line bundle $L$ on $X$, $F^*L$ admits a canonical connection $\nabla_{\text{can}}$; see \cite[p. 164]{MR2732093}. Let $d_{\nabla_{\mathrm{can}}}$ denote its extension to $F^*L$-valued differential forms. Locally, if $e$ is a frame of $L$, that is, a nowhere vanishing section $e \in H^0(U, L\vert_U)$ for some open set $U \subset X$ giving $\cO_U \simeq L \vert_U$, define
	\begin{equation} \label{e49}
		d_{\nabla_{\mathrm{can}}}:	\Omega^{i-1}_X \otimes F^*(L) \longrightarrow \Omega^{i}_X \otimes F^*(L), \quad d_{\nabla_{\mathrm{can}}}(\omega\otimes F^*e)\,=\, d\omega\otimes F^*e.
	\end{equation}
	This is independent of $e$. Indeed, if $e'=ge$, where $g\in\mathcal O_X^\times(U)$, then $F^*e'=g^pF^*e$, and $d(g^p\omega)=g^p\,d\omega$,
	since $d(g^p)=0$ in characteristic $p$.

	\begin{proposition} \label{nonvanishingexactforms}
		For $1\leq i\leq n$ and for all integer $m\geq0$, if $pm\geq i+1$, then $H^0\bigl(X,B_X^i(mH)\bigr)\neq0$.
	\end{proposition}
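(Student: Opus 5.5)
Since $F$ is affine, the projection formula identifies global sections:
\[
H^0\bigl(X,B_X^i(mH)\bigr)=H^0\bigl(X,F_*(d\Omega^{i-1}_X)\otimes\mathcal O_X(mH)\bigr)=H^0\bigl(X,d_{\nabla_{\mathrm{can}}}(\Omega^{i-1}_X\otimes F^*\mathcal O_X(mH))\bigr).
\]
The last space is the space of global sections of $\Omega^i_X\otimes\mathcal O_X(pmH)$ that are locally of the form $d\omega\otimes F^*e$, where $e$ is a local frame of $\mathcal O_X(mH)$. Here we use $F^*\mathcal O_X(mH)\cong\mathcal O_X(pmH)$. So the plan is to produce an explicit nonzero global section of $\Omega^i_X(pmH)$ that is locally $d_{\nabla_{\mathrm{can}}}$-exact.

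The construction uses global sections of $H$. Because $H$ is very ample, choose sections $s_0,s_1,\dots,s_i\in H^0(X,\mathcal O_X(H))$ such that on $U=\{s_0\neq0\}$ the rational functions $f_j=s_j/s_0$, $1\le j\le i$, have differentials $df_1,\dots,df_i$ linearly independent at a general point. This is possible because $X\hookrightarrow\mathbb P^N$ and $i\le n$: general linear forms restrict to part of a local coordinate system. On $U$, with frame $s_0$, consider the $\mathcal O_X(pmH)$-valued $i$-form
\[
\eta\;=\;d\bigl(f_1\,df_2\wedge\cdots\wedge df_i\bigr)\otimes s_0^{pm}\;=\;df_1\wedge\cdots\wedge df_i\otimes s_0^{pm}.
\]
Note that $s_0^{pm}=F^*(s_0^m)$, and $s_0^m$ is a frame of $\mathcal O_X(mH)$ on $U$. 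So $\eta$ is $d_{\nabla_{\mathrm{can}}}$-exact on $U$, and it is nonzero by the independence of the $df_j$.

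The main step is to show that $\eta$ extends to a global section of $\Omega^i_X(pmH)$, and to check that its local exactness persists on every chart $\{s_k\neq0\}$. Write $f_j=s_j/s_0$. Then $df_1\wedge\cdots\wedge df_i$ has a pole of order at most $i+1$ along $\{s_0=0\}$. Homogeneously, it equals
\[
s_0^{-(i+1)}\sum_{r=0}^{i}(-1)^r s_r\,ds_0\wedge\cdots\widehat{ds_r}\cdots\wedge ds_i,
\]
the usual Euler-type identity, in which the numerator is a well-defined twisted form with values in $\mathcal O_X((i+1)H)$ after twisting. Multiplying by $s_0^{pm}$ with $pm\ge i+1$ clears the pole. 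Hence $\eta$ extends to a global section
\[
s_0^{pm-i-1}\cdot\Theta,\qquad \Theta\in H^0\bigl(X,\Omega^i_X((i+1)H)\bigr).
\]

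It remains to verify exactness on the other charts $\{s_k\neq0\}$. There, using the frame $s_k^m$, one must exhibit $\eta=d(\omega_k)\otimes s_k^{pm}$. The cleanest route is to work on the affine cone, or equivalently to write everything in terms of the functions $g_r=s_r/s_k$. Then
\[
\eta=(s_0/s_k)^{pm}\,d(g_1/g_0)\wedge\cdots\wedge d(g_i/g_0)\otimes s_k^{pm}=g_0^{pm}\,d(g_1/g_0)\wedge\cdots\wedge d(g_i/g_0)\otimes s_k^{pm}.
\]
Since $g_0^{pm}=(g_0^m)^p$ is a $p$-th power, it passes through $d$, giving
\[
\eta=d\bigl(g_0^{pm}\,(g_1/g_0)\,d(g_2/g_0)\wedge\cdots\bigr)\otimes s_k^{pm}.
\]
The subtle point is that the primitive must be regular on $\{s_k\neq0\}$. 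The primitive has a pole of order $i$ only, which $g_0^{pm}$ kills since $pm\ge i+1>i$.

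I expect this regularity bookkeeping to be the main obstacle. Alternatively, one can avoid charts entirely: $\eta$ is a global section of $F_*\Omega^i_X\otimes\mathcal O_X(mH)$ killed by $d_{\nabla_{\mathrm{can}}}$ and by $C$ on $U$, hence everywhere, since the sheaf is torsion-free. It therefore lies in $B^i_X(mH)$ by the Cartier exact sequences \eqref{defseq} and \eqref{cartierseq}. Here $C(\eta)=0$ because $C$ applied to $df_1\wedge\cdots\wedge df_i\cdot s_0^{pm}$ gives $f_1^{p-1}\cdots$, which vanishes for exact forms. I would present this second argument as the verification, since it reduces to the extension across the divisor, which is exactly where $pm\ge i+1$ enters.
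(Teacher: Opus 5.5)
Your proposal is correct, and its construction matches the paper's: the same Euler-type form pulled back from $\mathbb P^N$, the same frame $s_0^{pm}=F^*(s_0^m)$ for $d_{\nabla_{\mathrm{can}}}$, the same use of $pm\ge i+1$ to clear a pole of order $i+1$ along $\{s_0=0\}$, and nonvanishing checked at the generic point. The difference is \emph{where} the pole is cleared.

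You extend the exact $i$-form $df_1\wedge\cdots\wedge df_i\otimes s_0^{pm}$ to a global section of $\Omega^i_X(pmH)$. You then need a separate argument that this section lies in $B^i_X(mH)$ everywhere. Your Cartier-operator argument does this correctly. Since $d_{\nabla_{\mathrm{can}}}\eta$ is a section of a locally free sheaf vanishing on the dense open $U$, we get $\eta\in H^0(Z^i_X(mH))$. Then $C(\eta)$ vanishes on $U$ for the same reason, so $\eta\in H^0(B^i_X(mH))$ by \eqref{cartierseq}.

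The paper instead clears the pole one degree lower, on the primitive. The form $s_0^{pm}\,x_i\,dx_1\wedge\cdots\wedge dx_{i-1}=s_is_0^{pm-i-1}\theta$ is already a global section of $\Omega^{i-1}_X(pmH)$. Hence $d_{\nabla_{\mathrm{can}}}$ of it is a global section of $B^i_X(mH)$ by definition, and no chart check, Cartier operator, or torsion-freeness is needed.

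Your chart primitive $g_0^{pm}(g_1/g_0)\,d(g_2/g_0)\wedge\cdots\wedge d(g_i/g_0)$ is exactly this global primitive written on $\{s_k\neq0\}$. Seeing that it is global, via the same Euler identity you used for $\eta$ but now with the $i$ sections $s_0,s_2,\dots,s_i$, removes what you called the main obstacle. In exchange, your torsion-freeness route proves something slightly more general: any global section of $\Omega^i_X(pmH)$ that is $d_{\nabla_{\mathrm{can}}}$-exact on a dense open subset lies in $H^0(B^i_X(mH))$.

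Two small slips:
\begin{itemize}
\item The chart primitive has a pole of order $i+1$, not $i$: order $i$ comes from the wedge of the $i-1$ differentials, plus one from $g_1/g_0$. This is still cleared since $pm\ge i+1$.
\item $C(\eta)|_U=0$ holds simply because $C$ kills exact forms. The remark about ``$f_1^{p-1}\cdots$'' is spurious and should be dropped.
\end{itemize}
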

	
	\begin{proof}
		Let $	\varphi_H:X\hookrightarrow\mathbb P^N$ be the closed immersion defined by $\mathcal O_X(H)$. We choose a basis $s_0,s_1,\ldots,s_N$
		of $H^0(X,\mathcal O_X(H))$. We consider the affine open subscheme $U=\{s_0\neq0\} \subset X$. Since $\varphi_H$ is a closed immersion, the induced map between cotangent spaces is surjective. Equivalently, the differentials
		\[
		d\left(\frac{s_1}{s_0}\right),\ldots, d\left(\frac{s_N}{s_0}\right)
		\]
		generate $\Omega_X^1|_U$.  Set $x_j=\frac{s_j}{s_0}$. Consequently, at the generic point $\xi$ of $X$, the differentials $dx_1,\ldots,dx_N$ span the $n$-dimensional $k(X)$-vector space $\Omega^1_{X, \xi}$. Since $i\leq n$, after relabelling the $s_j$, we may assume that $0 \ne dx_1\wedge\cdots\wedge dx_i \in \wedge^i \Omega^1_{X, \xi}$ at $\xi$.
		
		On $\mathbb P^N$, with homogeneous coordinates $X_0,\ldots,X_N$, we consider
		\[
		\Theta=\sum_{j=0}^{i-1} (-1)^jX_j\,dX_0\wedge\cdots\wedge \widehat{dX_j}\wedge\cdots\wedge dX_{i-1}.
		\]
		This defines a section $\Theta\in H^0\bigl(\mathbb P^N,\Omega_{\mathbb P^N}^{i-1}(i)\bigr)$; see \cite[Example 2.1, p. 549]{MR4974778}.
		We denote $\theta=\varphi_H^*\Theta$. Then $\theta\in H^0\bigl(X,\Omega_X^{i-1}(iH)\bigr)$.
		
		Since $U=\{s_0\neq0\} \subset X$, we have $U=\varphi_H^{-1}(\{X_0 \ne 0\})$.  Using the following identity defined on $U$
		\[
		dx_j=d\left(\frac{s_j}{s_0}\right)=\frac{s_0\,ds_j-s_j\,ds_0}{s_0^2},
		\]
		it follows that 
		\begin{equation} \label{e48}
			\frac{\theta}{s_0^i}=dx_1\wedge\cdots\wedge dx_{i-1}.
		\end{equation}
		
		On $U$, consider the $(i-1)$-form $\alpha=x_i\,dx_1\wedge\cdots\wedge dx_{i-1}$. Using $x_i=\frac{s_i}{s_0}$ and \eqref{e48}, we obtain
		\[
		s_0^{pm}\alpha=s_0^{pm}\frac{s_i}{s_0}\frac{\theta}{s_0^i}=s_i s_0^{pm-i-1}\theta.
		\]
		Since $pm\geq i+1$, $pm-i-1 \ge 0$. Hence $\eta:=s_i s_0^{pm-i-1}\theta$ is a global section of $\Omega_X^{i-1}(pm H)$.
		
		Since  $\cO_X(pmH) \cong F^* \cO_X(mH)$, $\cO_X(pmH)$ admits a canonical connection $\nabla_{\text{can}}$. Let $d_{\nabla_{\mathrm{can}}}$ denote its extension to $\cO_X(pmH)$-valued differential forms; see \eqref{e49}. We now prove that $d_{\nabla_{\mathrm{can}}}(\eta)$ gives a nonzero section of $\Omega_X^{i}(pm H)$ by showing $d_{\nabla_{\mathrm{can}}}(\eta) \ne 0$ on $U$.
		
		On $U$, the section $s_0^m$ is a frame of $\cO_X(mH)$, and $F^*(s_0^m)=s_0^{pm}$. Therefore,
		\begin{equation}
			\frac{d_{\nabla_{\mathrm{can}}}(\eta)}{s_0^{pm}}=d\left(\frac{\eta}{s_0^{pm}}\right).
		\end{equation}
		Since $\frac{\eta}{s_0^{pm}}\,=\,x_idx_1\wedge\cdots\wedge dx_{i-1}$, we obtain
		\begin{equation}
			\frac{d_{\nabla_{\mathrm{can}}}(\eta)}{s_0^{pm}} =d\left(x_i\,dx_1\wedge\cdots\wedge dx_{i-1}\right)=dx_i\wedge dx_1\wedge\cdots\wedge dx_{i-1}=(-1)^{i-1}dx_1\wedge\cdots\wedge dx_i.
		\end{equation}
		By construction, $dx_1\wedge\cdots\wedge dx_i$ is nonzero at the generic point $\xi$. Hence $d_{\nabla_{\mathrm{can}}}(\eta) \neq0$.
		
		Tensoring the $\cO_X$-linear morphism $F_*d:F_*\Omega_X^{i-1}\longrightarrow F_*\Omega_X^i$ with $\cO_X(mH)$, we get
		\begin{equation} \label{ee41}
			F_*\Omega_X^{i-1}\otimes \cO_X(mH)\longrightarrow F_*\Omega_X^i\otimes \cO_X(mH).
		\end{equation}
		By the projection formula, $F_*\bigl(\Omega_X^j\otimes F^* \cO_X(mH)\bigr) \cong F_*{\Omega_X^j} \otimes \cO_X(mH)$.
		Under these identifications, the map in \eqref{ee41} is the Frobenius pushforward of the map induced by $d_{\nabla_{\mathrm{can}}}$ on $\Omega_X^\bullet\otimes F^*\cO_X(mH)$ in \eqref{e49}.
		
		Since tensoring with a line bundle is exact, the image of \eqref{ee41} is
		\[
		\operatorname{im}(F_*d)\otimes \cO_X(mH)=B_X^i\otimes \cO_X(mH)=B_X^i(mH).
		\]
		Consequently, the nonzero section $d_{\nabla_{\mathrm{can}}}(\eta)$ determines a nonzero global section of $B_X^i(mH)$. Therefore, $H^0\bigl(X,B_X^i(mH)\bigr)\neq0$.
	\end{proof}

	For $e \ge 1$, we consider the sheaves:
	\begin{equation} 
		B_{e}:=\operatorname{coker} \bigl(\mathcal O_X\longrightarrow F_*^e\mathcal O_X\bigr).
	\end{equation}
	When $e=1$, $B_{1}$ is the sheaf of locally exact  $1$-forms $B^1_X$.

	\begin{proposition} \label{filtration of iterated 1 forms}
		Let $X$ be a smooth projective variety. Then there are natural exact sequences
		\begin{equation} \label{B_eseq}
			0\longrightarrow B_{e-1}\longrightarrow B_e\longrightarrow F_*^{e-1}B_1\longrightarrow0.
		\end{equation}
		Consequently, $B_e$ has a filtration with successive quotients $B_1, F_*B_1,\cdots, F_*^{e-1}B_1$.
	\end{proposition}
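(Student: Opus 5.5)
The plan is to factor $F^e=F^{e-1}\circ F$ and compare the two inclusions $\mathcal O_X\to F^{e-1}_*\mathcal O_X\to F^e_*\mathcal O_X$. The first map is the structure map $\mathcal O_X\to F^{e-1}_*\mathcal O_X$. The second is $F^{e-1}_*$ applied to $\mathcal O_X\to F_*\mathcal O_X$, and it is injective because $F^{e-1}_*$ is exact, $F$ being finite (indeed affine). Their composite is the structure map $\mathcal O_X\to F^e_*\mathcal O_X$, since $F^{e}_*\mathcal O_X=F^{e-1}_*(F_*\mathcal O_X)$ and the map on local sections is $a\mapsto a^{p^{e-1}}\mapsto a^{p^e}$. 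One should fix the convention that the inner Frobenius is applied last, i.e. $F^e_*=F^{e-1}_*F_*$, but either order gives the same sheaf and the same map.

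For a pair of injections $A\hookrightarrow B\hookrightarrow C$ of sheaves, the snake lemma (equivalently, the third isomorphism theorem in the abelian category of $\mathcal O_X$-modules) gives a natural short exact sequence
\[
0\longrightarrow B/A\longrightarrow C/A\longrightarrow C/B\longrightarrow 0.
\]
Take $A=\mathcal O_X$, $B=F^{e-1}_*\mathcal O_X$ and $C=F^e_*\mathcal O_X$. Then $B/A=B_{e-1}$ and $C/A=B_e$. The remaining step is to identify $C/B$. By exactness of $F^{e-1}_*$, applying it to \eqref{Fsplitseq} gives
\[
0\to F^{e-1}_*\mathcal O_X\to F^e_*\mathcal O_X\to F^{e-1}_*B_1\to0,
\]
so $C/B\cong F^{e-1}_*B_1$. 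This yields \eqref{B_eseq}. All maps are canonical, so the sequences are natural.

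The filtration follows by induction on $e$, with $B_0=0$. The inclusions $B_1\subset B_2\subset\cdots\subset B_e$ have successive quotients $B_1, F_*B_1,\dots,F^{e-1}_*B_1$. These are vector bundles because $F$ is finite flat on smooth $X$, so each $B_e$ is locally free as well.

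There is no real obstacle. The only point needing care is that the composite of the two structure maps really is the structure map for $F^e$ (compatible $\mathcal O_X$-module structures under the identification $F^e_*=F^{e-1}_*F_*$), together with the exactness of $F^{e-1}_*$. Both are immediate from $F$ being affine and the identity on $|X|$.
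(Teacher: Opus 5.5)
Your proposal is correct and takes essentially the same route as the paper. You factor $\iota_e$ through $F_*^{e-1}\mathcal O_X$ via $F_*^{e-1}\iota_1$, identify the cokernel of $F_*^{e-1}\mathcal O_X\to F_*^e\mathcal O_X$ with $F_*^{e-1}B_1$ by exactness of $F_*^{e-1}$, apply the snake lemma (equivalently, the third isomorphism theorem), and iterate to obtain the filtration. Your local check that the composite $a\mapsto a^{p^{e-1}}\mapsto a^{p^e}$ is the structure map for $F^e$ is a detail the paper only asserts.
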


	\begin{proof}
		We consider the \ses
		\[
		0\longrightarrow \mathcal O_X \xrightarrow{\iota_1} F_*\mathcal O_X \longrightarrow B_1 \longrightarrow 0.
		\]
		Since $F$ is finite, the functor $F_*$ is exact. Applying $F_*^{e-1}$ to this sequence gives
		\begin{equation} \label{e33}
			0\longrightarrow F_*^{e-1}\mathcal O_X \longrightarrow F_*^e\mathcal O_X \longrightarrow F_*^{e-1}B_1 \longrightarrow 0.
		\end{equation}
		Moreover, the map $\iota_e$ factors as $\mathcal O_X\xrightarrow{\iota_{e-1}} F_*^{e-1}\mathcal O_X \longrightarrow F_*^e\mathcal O_X$.
		Consequently, for every $e\geq2$, there is a commutative diagram
		\footnotesize
		\[
		\begin{array}{ccccccccc}
			0 & \longrightarrow & \mathcal O_X
			& \xrightarrow{\iota_{e-1}}
			& F_*^{e-1}\mathcal O_X
			& \longrightarrow & B_{e-1}
			& \longrightarrow & 0 \\[3pt]
			
			& & \downarrow
			& & \downarrow
			& & \downarrow
			& & \\[3pt]
			
			0 & \longrightarrow & \mathcal O_X
			& \xrightarrow{\iota_e}
			& F_*^e\mathcal O_X
			& \longrightarrow & B_e
			& \longrightarrow & 0
		\end{array}
		\]
		\normalsize
		The cokernel of the middle vertical map is $F_*^{e-1}B_1$ by \eqref{e33}. Hence the snake lemma gives the natural exact sequence
		\begin{equation} \label{e34}
			0\longrightarrow B_{e-1} \longrightarrow B_e \longrightarrow F_*^{e-1}B_1 \longrightarrow 0.
		\end{equation}
		Iterating the natural inclusions in \eqref{e34}, we obtain $0\subset B_1\subset B_2\subset\cdots\subset B_e$, and
		\[
		B_j/B_{j-1}\cong F_*^{j-1}B_1
		\]
		for $1\leq j\leq e$, where $B_0=0$. Thus the successive quotients are $B_1, F_*B_1, \cdots, F_*^{e-1}B_1$.
	\end{proof}

	\begin{cor} \label{oneformnonvan}
		For every $e \ge 1$, we have $H^0(X, B_{e}(H)) \ne 0$.
	\end{cor}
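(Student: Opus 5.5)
The plan is to reduce to the case $e=1$ using the filtration of Proposition \ref{filtration of iterated 1 forms}, and then to apply Proposition \ref{nonvanishingexactforms}.

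First, the case $e=1$. Here $B_1=B_X^1$. We apply Proposition \ref{nonvanishingexactforms} with $i=1$ and $m=1$. Its hypothesis is $pm=p\geq 2=i+1$, and this holds because $p\geq 2$. Hence $H^0(X,B_X^1(H))\neq 0$. We need $n\geq1$ so that $i=1\leq n$ is allowed. This is the standing assumption; if $n=0$, then $B_1=0$ and the statement is vacuous or false, so we assume $\dim X\geq 1$.

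Next, the case $e\geq 2$. Twist the exact sequence \eqref{B_eseq} by $\mathcal O_X(H)$. Twisting is exact, so we get
\[
0\longrightarrow B_{e-1}(H)\longrightarrow B_e(H)\longrightarrow \bigl(F_*^{e-1}B_1\bigr)(H)\longrightarrow 0.
\]
Taking global sections is left exact, so $H^0(X,B_{e-1}(H))$ injects into $H^0(X,B_e(H))$. By induction on $e$, or directly from the chain of inclusions $B_1\subset B_2\subset\cdots\subset B_e$, we get an injection $H^0(X,B_1(H))\hookrightarrow H^0(X,B_e(H))$. The left-hand side is nonzero by the first step, so $H^0(X,B_e(H))\neq 0$.

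There is no real obstacle. The only points to check are that the inclusion $B_{e-1}\to B_e$ is injective as a map of sheaves, which the snake lemma in Proposition \ref{filtration of iterated 1 forms} provides, and that the numerical condition $p\cdot 1\geq 2$ in Proposition \ref{nonvanishingexactforms} is satisfied.
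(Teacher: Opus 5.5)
Your proposal is correct and follows essentially the same route as the paper: apply Proposition \ref{nonvanishingexactforms} with $i=m=1$ (valid since $p\geq 2$) to get $H^0(X,B_X^1(H))\neq 0$, and then use the inclusion $B_1\subseteq B_e$ from Proposition \ref{filtration of iterated 1 forms} together with left exactness of global sections. Your remark that $\dim X\geq 1$ is needed matches the paper's standing assumption.
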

	
	\begin{proof}
		Applying Proposition \ref{nonvanishingexactforms} with $m=i=1$, we obtain $H^0(X, B^1_{X}(H)) \ne 0$. Since $B^1_{X}=B_{1} \subseteq B_{e}$ for every $e\ge1$ by Proposition \ref{filtration of iterated 1 forms}, this gives $H^0(X, B_{e}(H)) \ne 0$.		
	\end{proof}
	
	\begin{lemma}
		\label{splitlem}
		Assume $X$ is Frobenius split. Then the sequence \eqref{B_eseq} splits.
	\end{lemma}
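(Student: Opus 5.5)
We construct an explicit $\cO_X$-linear retraction of the inclusion $B_{e-1}\hookrightarrow B_e$, or equivalently a section of $B_e\to F_*^{e-1}B_1$, using a Frobenius splitting $\sigma\colon F_*\cO_X\to\cO_X$ with $\sigma\circ\iota_1=\mathrm{id}$. Iterating $\sigma$ gives maps $\sigma^{(j)}\colon F_*^j\cO_X\to\cO_X$, with $\sigma^{(j)}=\sigma^{(j-1)}\circ F_*^{j-1}\sigma$, each splitting $\iota_j$. This is the standard fact that a Frobenius splitting induces splittings of all iterates.

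The plan is to split the middle vertical map $F_*^{e-1}\cO_X\to F_*^e\cO_X$ of the diagram in the proof of Proposition \ref{filtration of iterated 1 forms} compatibly with the $\iota$'s. This middle map is $F_*^{e-1}\iota_1$, so $r:=F_*^{e-1}\sigma\colon F_*^e\cO_X\to F_*^{e-1}\cO_X$ satisfies $r\circ F_*^{e-1}\iota_1=\mathrm{id}$. We then check that $r$ carries the image of $\iota_e$ into the image of $\iota_{e-1}$. Since $\iota_e=F_*^{e-1}\iota_1\circ\iota_{e-1}$, we get $r\circ\iota_e=\iota_{e-1}$. Hence $r$ descends to a map on cokernels $\bar r\colon B_e\to B_{e-1}$.

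Composing $\bar r$ with the inclusion $B_{e-1}\to B_e$ (induced by $F_*^{e-1}\iota_1$) gives the map induced by $r\circ F_*^{e-1}\iota_1=\mathrm{id}$. So $\bar r$ retracts the inclusion and the sequence \eqref{B_eseq} splits. The kernel of $\bar r$ maps isomorphically onto $F_*^{e-1}B_1$, which yields $B_e\cong B_{e-1}\oplus F_*^{e-1}B_1$ and, by induction, $B_e\cong\bigoplus_{j=0}^{e-1}F_*^jB_1$.

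The only point needing care is $\cO_X$-linearity. The map $F_*^{e-1}\sigma$ is $\cO_X$-linear for the $\cO_X$-module structure on $F_*^e\cO_X$, because $\sigma$ is $\cO_X$-linear and $F_*^{e-1}$ is a functor of $\cO_X$-modules. All the identifications here are the ones already used in Proposition \ref{filtration of iterated 1 forms}, so I expect no real obstacle. The main thing to verify is that the inclusion $B_{e-1}\to B_e$ in \eqref{B_eseq} is indeed induced by $F_*^{e-1}\iota_1$, which holds by construction of the snake-lemma diagram.
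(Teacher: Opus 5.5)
Your proposal is correct and follows essentially the same argument as the paper. Both take $F_*^{e-1}\sigma$ as a retraction of $F_*^{e-1}\iota_1$, use $F_*^{e-1}\sigma\circ\iota_e=\iota_{e-1}$ to descend it to a map $B_e\to B_{e-1}$, and observe that this map retracts the inclusion $B_{e-1}\hookrightarrow B_e$. Your write-up only spells out the retraction check and the $\cO_X$-linearity a bit more explicitly than the paper does.
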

	\begin{proof}
	Since $X$ is Frobenius split, there exists a splitting $\sigma:F_*\mathcal O_X\longrightarrow\mathcal O_X$ of \eqref{Fsplitseq}. This
		induces a splitting $F_*^{e-1}\sigma$ of $F_*^{e-1} \iota_1: F_*^{e-1} \cO_X \longrightarrow F_*^{e} \cO_X$, and we obtain the following commutative diagram
		\footnotesize
		\[\begin{tikzcd}
			\cO_X && {F_*^{e} \cO_X} \\
			&& {F_*^{e-1} \cO_X}
			\arrow["{\iota_{e}}", from=1-1, to=1-3]
			\arrow["{\iota_{e-1}}", from=1-1, to=2-3]
			\arrow["{F_*^{e-1}\sigma}", from=1-3, to=2-3]
		\end{tikzcd}\]
		\normalsize
		Thus, after quotienting by the compatible images of $\mathcal O_X$, the map $F_*^{e-1}\sigma$ descends to a splitting
		$B_e\to B_{e-1}$.
	\end{proof}

	\section{Curves with Ulrich twisted $B_e$ and varieties with ACM $B^i_X$, $Z^i_X$}
	\label{three}
Let $X$ be a smooth projective variety of dimension $n$ over an algebraically closed field $k$ of characteristic $p>0$. When $n=1$ and $X$ is ordinary, we show that the vector bundles $B_e(1)$ are Ulrich with respect to any very ample line bundle $\cO_X(1)$. Moreover, we investigate the ACM property of $B^i_X$ and $Z^i_X$ in arbitrary dimensions. Under an additional assumption, we classify varieties for which $B^1_X$ (equivalently, $B^n_X$) is initialized ACM.

 The absolute Frobenius $F:X \longrightarrow X$ is a finite flat morphism of degree $p^n$. Thus, $B^1_X$ is a locally free sheaf of rank $p^n-1$. By \cite[Theorem 2.1, p. 289]{kitadais2008canonical}, we have $c_1(F_*\cO_X)=\frac{p^{n-1}(p-1)}{2}K_X$ in $N^1_{\bQ}(X)$.
Therefore, from \eqref{Fsplitseq}, we obtain
\begin{equation} \label{degreeB^1}
	c_1(B_X^1)=\frac{p^{n-1}(p-1)}{2}K_X,
\end{equation}
in $N^1_{\bQ}(X)$. For a curve, this specializes to $\deg(B_X^1)=(p-1)(g-1)$. For $e \ge 1$, recall the bundles 
\begin{equation}
	B_e=\text{coker}\left(\iota_e:\mathcal O_X\longrightarrow F_*^e\mathcal O_X\right).
\end{equation}
Note that $B_1=B^1_X$ by \eqref{Fsplitseq}. Recall the \ses constructed in Proposition \ref{filtration of iterated 1 forms}
\begin{equation} \label{B_eseq3}
	0\longrightarrow B_{e-1}\longrightarrow B_e\longrightarrow F_*^{e-1}B_1\longrightarrow 0.
\end{equation}

\subsection{\boldmath The sheaves  $B_e$ on curves}  
\label{curclassi}
Let $C$ be a smooth projective curve over $k$ of genus $g$. When $C$ is ordinary, $B_1(1)$ is Ulrich with respect to any very ample line bundle $\cO_C(1)$ by \cite[Theorem 2.1]{MR3920911}. We generalize to $B_e$ for every integer $e\geq 1$, and construct a sequence of indecomposable Ulrich vector bundles on ordinary curves with $g \ge 2$.

\begin{theorem} \label{thm3.1}
	Let $C$ be a smooth projective curve. Then the curve $C$ is ordinary if and only if $B_e(1)$ is Ulrich with respect to some, or equivalently every, very ample line bundle $\mathcal O_C(1)$.
\end{theorem}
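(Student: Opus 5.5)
On a curve $C$ with very ample $\mathcal O_C(1)$ of degree $d$, condition (D1) for $n=1$ says only that $H^\bullet(C,E)=0$, i.e.\ $H^0(C,E)=H^1(C,E)=0$. So $B_e(1)$ is Ulrich exactly when $H^0(C,B_e)=H^1(C,B_e)=0$. This condition does not involve $\mathcal O_C(1)$ at all, which already gives the ``some, equivalently every'' clause. The statement therefore reduces to showing that $C$ is ordinary if and only if $H^\bullet(C,B_e)=0$. By Definition \ref{ordinarydef} with $n=1$, ordinarity means $H^\bullet(C,B^1_C)=H^\bullet(C,B_1)=0$.

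For ``ordinary $\Rightarrow$ Ulrich'' I argue by induction on $e$ using \eqref{B_eseq3}. The base case $e=1$ is the definition. For the step, $F$ is finite and hence affine, so $H^j(C,F_*^{e-1}B_1)\cong H^j(C,B_1)=0$. The long exact sequence of \eqref{B_eseq3} then gives $H^\bullet(C,B_e)=0$ once $H^\bullet(C,B_{e-1})=0$.

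For the converse, assume $H^\bullet(C,B_e)=0$ for some $e\ge1$. I would not try to extract information from the filtration, since vanishing of the middle term does not force vanishing of the sub. Instead I would use the defining sequence $0\to\mathcal O_C\to F^e_*\mathcal O_C\to B_e\to0$. Its long exact sequence shows that $H^j(C,\mathcal O_C)\to H^j(C,F^e_*\mathcal O_C)=H^j(C,\mathcal O_C)$ is an isomorphism for $j=0,1$. This map is the $e$-th iterate of the Frobenius action on $H^1(C,\mathcal O_C)$, which is $p$-linear. So $F^e$ is bijective on $H^1(C,\mathcal O_C)$, hence $F$ itself is bijective there.

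Finally, apply the sequence \eqref{Fsplitseq} for $e=1$. We have $H^0(\mathcal O_C)\to H^0(F_*\mathcal O_C)$ an isomorphism $k\to k$ and $F$ bijective on $H^1$, so the long exact sequence gives $H^0(C,B_1)=H^1(C,B_1)=0$. Thus $C$ is ordinary.

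The only delicate point is the identification of the connecting data: the map $H^1(C,\mathcal O_C)\to H^1(C,F^e_*\mathcal O_C)$ must be identified with the iterated Frobenius $F^e$. Then injectivity of a $p^e$-linear map gives injectivity, and hence bijectivity by finite dimensionality, of $F$ itself, since $\ker F\subseteq\ker F^e$. I expect this bookkeeping, rather than any cohomological difficulty, to be the main point to check.
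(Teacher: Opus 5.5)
Your proof is correct. The forward direction and the reduction of the ``some, equivalently every'' clause to $H^\bullet(C,B_e)=0$ coincide with the paper's argument. Your converse, however, takes a different route.

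The paper stays with the filtration of Proposition \ref{filtration of iterated 1 forms}. The inclusion $B_1\subset B_e$ gives $H^0(C,B_1)\hookrightarrow H^0(C,B_e)=0$. Since $H^2$ vanishes on a curve, \eqref{B_eseq3} gives a surjection $H^1(C,B_e)\twoheadrightarrow H^1(C,F_*^{e-1}B_1)\cong H^1(C,B_1)$. So your stated reason for abandoning the filtration is not quite right. You never need vanishing to pass to the whole subsheaf. You only need $H^0$-vanishing for the bottom piece and $H^1$-vanishing for the top quotient, and both follow at once.

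You instead read off from $0\to\mathcal O_C\to F^e_*\mathcal O_C\to B_e\to 0$ that $H^\bullet(C,B_e)=0$ exactly when $F^e$ is bijective on $H^1(C,\mathcal O_C)$. The $H^0$ map $\lambda\mapsto\lambda^{p^e}$ on $k$ is automatically bijective because $k$ is perfect. You then descend from $F^e$ to $F$.

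The identification you flag is fine. $F^e$ is the identity on $|C|$, so $F^e_*\mathcal O_C$ is $\mathcal O_C$ as a sheaf of abelian groups. The map $\iota_e$ is $f\mapsto f^{p^e}$, the $e$-fold composite of $f\mapsto f^p$. Passing from $F^e$ to $F$ does not even need finite-dimensionality: since $F^e=F\circ F^{e-1}=F^{e-1}\circ F$ is bijective, $F$ is both surjective and injective.

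The paper's route is shorter and avoids the semilinear bookkeeping. Yours is more conceptual: for every $e$, it shows the condition is equivalent to bijectivity of Frobenius on $H^1(C,\mathcal O_C)$, i.e.\ invertibility of the Hasse--Witt map. Run in reverse, it would also prove the forward direction without the induction on $e$.
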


\begin{proof} 
	Since $B_1$ is locally free and $F$ is finite and flat, we have  $F_*^{j}B_1$ is locally free. Thus, inductively every $B_e$ is locally free.
	We assume that $C$ is ordinary.	Thus, by Definition \ref{ordinarydef}, $H^i(C, B_1)=0$ for $i=0, 1$. 
	For every $j\geq0$, since $F^j$ is finite, we get
	\begin{equation} \label{e35}
		H^i\bigl(C,F_*^jB_1\bigr)\cong H^i(C,B_1)=0,\qquad i=0,1.
	\end{equation}
	Using the short exact sequences \eqref{B_eseq3}, and using \eqref{e35}, induction on $e$ yields
	\begin{equation} \label{e36}
		H^0(C,B_e)=H^1(C,B_e)=0
	\end{equation}
	for every $e\geq1$. Now let $\mathcal O_C(1)$ be any very ample line bundle. Then we have $H^i(C,B_e(1) \otimes \cO_C(-1))=H^i(C,B_e)=0$ for $i=0, 1$ by \eqref{e36}. Therefore, by condition $(D_1)$ in Definition \ref{ulprop}, $B_e(1)$ is an Ulrich bundle \wrt $\cO_C(1)$.
	
	Conversely, assume that for some $e$, $B_e(1)$ is Ulrich with respect to some very ample line bundle $\cO_C(1)$. Thus, $H^i(C, B_e)=0$ for $i=0,1$. Since $B_1 \subset B_e$, we get $H^0(C, B_1)=0$. Moreover, taking the \les in cohomology for \eqref{B_eseq3}, we obtain $H^1(C, F^{e-1}_* B_1)=0$. This gives $H^1(C, B_1)=0$. Thus, $C$ is ordinary.
\end{proof}

For the following two results, we do not assume the curve $C$ to be ordinary.

\begin{lemma}
	Let $C$ be a smooth projective curve of genus $g$. For every $e \ge 1$, the bundle $B_e$ is semistable of slope $(g-1)$. 
\end{lemma}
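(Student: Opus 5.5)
The plan is to first compute the slope and then prove semistability by induction on $e$, using the filtration of Proposition \ref{filtration of iterated 1 forms} together with the known semistability of Frobenius pushforwards on curves.

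\textbf{Slope.} From \eqref{degreeB^1}, $\deg B_1=(p-1)(g-1)$ and $\operatorname{rk}B_1=p-1$, so $\mu(B_1)=g-1$. For a vector bundle $E$ on $C$, Riemann--Roch together with $\chi(F_*E)=\chi(E)$ (since $F$ is finite) gives
\[
\deg F_*E=\deg E+(p-1)\operatorname{rk}(E)(g-1).
\]
Applying this to $E=B_1$ yields $\deg F_*B_1=p(p-1)(g-1)$ and $\operatorname{rk}F_*B_1=p(p-1)$, so $\mu(F_*B_1)=g-1$. Inductively, $\mu(F_*^{j}B_1)=g-1$ for all $j\geq0$. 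The filtration of Proposition \ref{filtration of iterated 1 forms} then shows $\mu(B_e)=g-1$. Alternatively one can compute directly: $\deg F^e_*\mathcal O_C=(p^e-1)(g-1)$ and $\operatorname{rk}B_e=p^e-1$.

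\textbf{Semistability.} An extension of semistable bundles of equal slope $\mu$ is semistable of slope $\mu$, since any subsheaf $S\subset B_e$ sits in $0\to S\cap B_{e-1}\to S\to S''\to0$ with $S''\subset F_*^{e-1}B_1$, and both pieces have slope $\le\mu$. By induction on $e$ using \eqref{B_eseq3}, it therefore suffices to show that each $F_*^{j}B_1$ is semistable.

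For $g\ge1$ I would invoke the theorem of Sun (Mehta--Pauly for $g\ge2$; for $g=1$, Frobenius pushforward preserves semistability elementarily) that $F_*$ preserves semistability on curves of genus $\ge1$. This reduces everything to the semistability of $B_1$ itself. For that, I would use Joshi's result, or argue directly: $B_1\otimes\omega_C^{-1}$ is dual to $B_1$ up to the Cartier duality \eqref{cartierduality}, and $B_1\subset F_*\omega_C$ via $B^1_C\hookrightarrow Z^1_C=F_*\omega_C$ from \eqref{Cartieroperator}. Since $F_*\omega_C$ is semistable of slope $g-1+\frac{(p-1)(g-1)}{p}\cdot$(correct normalization), I would compare slopes of subsheaves using the quotient $F_*\omega_C\to\omega_C$, in the style of Joshi's argument. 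This direct argument for $B_1$ is the step I expect to be the main obstacle; I would first try to cite the known semistability of $B^1_C$ (Joshi, or Sun's stability of $B_1$).

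\textbf{Genus $0$.} On $\mathbb P^1$ the slope is $-1$. Here $F^e_*\mathcal O=\mathcal O\oplus\mathcal O(-1)^{p^e-1}$, which gives $B_e\cong\mathcal O(-1)^{p^e-1}$, and this is semistable directly.
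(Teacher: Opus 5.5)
Your proposal is correct and follows essentially the same route as the paper: each graded piece $F_*^{j}B_1$ of the filtration from \eqref{B_eseq3} has slope $g-1$, you obtain semistability of $B_1$ by citation (the paper cites Raynaud) and of $F_*^{j}B_1$ by Sun's theorem, you conclude by the extension argument, and you treat $\mathbb P^1$ separately (computing $F^e_*\mathcal O_{\mathbb P^1}\cong\mathcal O\oplus\mathcal O(-1)^{\oplus(p^e-1)}$ directly, where the paper instead uses the Ulrich property from Theorem~\ref{thm3.1}). Do rely on the citation for $B_1$: your sketched direct argument via $B_1\subset F_*\omega_C$ only bounds subsheaf slopes by $\mu(F_*\omega_C)=\frac{(p+1)(g-1)}{p}$, which exceeds $g-1$ when $g\ge2$, so it cannot by itself give semistability (it does suffice when $g=1$).
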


\begin{proof}
	By Proposition \ref{filtration of iterated 1 forms}, the canonical filtration
	\[
	0=B_0\subset B_1\subset\cdots\subset B_e
	\]
	has successive quotients $	B_i/B_{i-1}\simeq F_*^{i-1}B_1$.  When $C=\bP^1$, $B_e \otimes \cO_{\bP^1}(1)$ is Ulrich \wrt $\cO_{\bP^1}(1)$. Thus, $B_e \cong \cO_{\bP^1}(-1)^{\oplus (p^e-1)}$, which is semistable.
	
	Assume $g \ge 1$. The bundle $B_1$ is semistable by \cite[Theorem 4.1.1, p. 119]{raynaud1982sections}, and $F^i_* B_1$ is also semistable by \cite[Theorem 2.2]{sun2008direct}. Moreover, $\mu(F^i_* B_1)=g-1$
	for every $i$. Thus, $\mu(B_e)=g-1$. By induction, we assume $B_n$ is semistable. We want to show that $B_{n+1}$ is also semistable. Assume by contradiction that $B_{n+1}$ is not semistable, and let $F \subset B_{n+1}$ with $\mu(F) > g-1$ be the maximal destabilizing subbundle. We consider the induced map $F \longrightarrow F^n_* B_1$. Since $\mu(F) > g-1=\mu(F^n_* B_1)$, this map is zero. Thus from \eqref{B_eseq3}, it follows that $F \subset B_n$. This contradicts the semistability of $B_n$. Thus, $B_{n+1}$ is semistable, and consequently $B_e$ is semistable for every integer $e \ge 1$. 
	
	If $e \ge 2$, then $B_1 \subset B_e$ with $\mu(B_1)=\mu(B_e)$. Thus, for $e \ge 2$, $B_e$ is strictly semistable.
\end{proof}

\begin{proposition} \label{B_eindecomposability}
	Let $C$ be a smooth projective curve of genus $g \ge 2$. For every $e \ge 1$, the bundle $B_e$ is simple. Consequently, $B_e$ is indecomposable. 
\end{proposition}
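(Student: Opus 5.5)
The plan is to argue by induction on $e$, using the canonical filtration of Proposition \ref{filtration of iterated 1 forms} together with stability of its graded pieces. The first input is that for $g\ge 2$ the bundle $B_1=B^1_C$ is in fact \emph{stable}, not merely semistable; this is the refinement of \cite[Theorem 4.1.1]{raynaud1982sections} due to Joshi and others. Sun's theorem \cite[Theorem 2.2]{sun2008direct} then shows that Frobenius pushforward preserves stability, so each $F_*^{j}B_1$ is stable of slope $g-1$. Since $\operatorname{rk}F_*^jB_1=p^j(p^{}-1)$, these graded pieces are pairwise non-isomorphic. Two non-isomorphic stable bundles of the same slope admit only the zero map between them, so
\[
\operatorname{Hom}(F_*^iB_1,F_*^jB_1)=0 \quad (i\ne j), \qquad \operatorname{End}(F_*^jB_1)=k .
\]
The case $e=1$ is exactly the stability of $B_1$, since stable bundles are simple.

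For the inductive step, take $\varphi\in\operatorname{End}(B_e)$ and compose with the maps of \eqref{B_eseq3}. The composite $B_{e-1}\to B_e\xrightarrow{\varphi}B_e\to F_*^{e-1}B_1$ vanishes. To see this, filter the source by the graded pieces $F_*^jB_1$ with $j<e-1$; each one maps to zero into $F_*^{e-1}B_1$ by the vanishing above. Hence $\varphi$ preserves $B_{e-1}$. It therefore induces an endomorphism of $B_{e-1}$, which is a scalar $\lambda$ by induction, and an endomorphism of $F_*^{e-1}B_1$, which is a scalar $\mu$.

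Set $\psi=\varphi-\lambda\,\mathrm{id}$. Then $\psi$ kills $B_{e-1}$, so it factors through a map $\bar\psi:F_*^{e-1}B_1\to B_e$, and composing $\bar\psi$ with the projection to $F_*^{e-1}B_1$ gives multiplication by $\mu-\lambda$. Two cases arise.
\begin{itemize}
\item If $\mu=\lambda$, then $\bar\psi$ lands in $B_{e-1}$. By the same Jordan--H\"older argument, $\operatorname{Hom}(F_*^{e-1}B_1,B_{e-1})=0$, so $\psi=0$ and $\varphi$ is a scalar.
\item If $\mu\ne\lambda$, then $(\mu-\lambda)^{-1}\bar\psi$ is a section of the projection, so \eqref{B_eseq3} splits.
\end{itemize}
Thus simplicity of $B_e$ is equivalent to non-splitting of \eqref{B_eseq3} at every stage. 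Indecomposability then follows because $\operatorname{End}(B_e)=k$ contains no nontrivial idempotents.

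The main obstacle is this non-splitting, and here I expect a genuine problem. By Lemma \ref{splitlem}, \eqref{B_eseq3} \emph{does} split whenever $C$ is Frobenius split, and ordinary curves of genus $g\ge2$ are Frobenius split. For such curves and $e\ge2$ one gets $B_e\cong B_{e-1}\oplus F_*^{e-1}B_1$, which is decomposable. So the plan proves the statement unconditionally for $e=1$. For $e\ge2$ it proves the statement exactly when all the extensions \eqref{B_eseq3} are non-split.

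Non-splitting should be attacked through the extension class in $\operatorname{Ext}^1(F_*^{e-1}B_1,B_{e-1})$. I would relate this class to a class in $H^1(C,\mathcal O_C)$ through the Frobenius action, and non-split should correspond to non-ordinary, i.e. non-split Frobenius. Before completing the argument, I would therefore check whether the statement needs an extra hypothesis, such as $e=1$ or $C$ not Frobenius split, since it appears to conflict with Lemma \ref{splitlem} as stated.
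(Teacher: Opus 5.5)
Your inductive reduction is correct as far as it goes. With $B_1$ stable and the pieces $F_*^jB_1$ stable of slope $g-1$ and pairwise non-isomorphic, $B_e$ is simple if and only if $B_{e-1}$ is simple and \eqref{B_eseq3} does not split. But for $e\ge 2$ that non-splitting is the entire content of the statement, and you leave it unproved.

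Your suspicion that the statement is false rests on an incorrect claim: no curve of genus $g\ge 2$ is Frobenius split, ordinary or not. A splitting would be a nonzero map $F_*\mathcal{O}_C\to\mathcal{O}_C$. Duality for the finite flat morphism $F$ gives $\operatorname{Hom}(F_*\mathcal{O}_C,\mathcal{O}_C)\cong H^0(C,\omega_C^{1-p})$, which is $0$ because $\deg\omega_C^{1-p}=(1-p)(2g-2)<0$. Equivalently, Frobenius splitting would force $H^1(C,\omega_C)=0$, since $\omega_C$ is ample. Ordinarity and Frobenius splitting agree only for $g\le 1$. This is why the remark following the proposition applies Lemma \ref{splitlem} only to $\mathbb{P}^1$ and ordinary elliptic curves. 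So there is no conflict and no extra hypothesis is needed; what is missing is an argument.

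The paper supplies the missing idea by bypassing the filtration altogether.
Put $E=F_*^e\mathcal{O}_C$. Iterating Sun's theorem starting from the stable bundle $\mathcal{O}_C$ shows $E$ is stable, so $\operatorname{End}(E)=k$. The key step is that the trace map $R(T)(a)=\operatorname{tr}(T\circ m_a)$ splits $0\to E^\vee\to\mathcal{E}nd(E)\to\mathcal{H}om(E,B_e)\to 0$, because $\operatorname{tr}(u\circ\lambda\circ m_a)=\lambda(a)$. Hence $\operatorname{End}(E)\to\operatorname{Hom}(E,B_e)$ is surjective on global sections. This is not automatic: $E^\vee\cong F_*^e(\omega_C^{1-p^e})$ has nonzero $H^1$.

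It follows that $\operatorname{Hom}(E,B_e)=k\,q$. Since $q$ is surjective, $\varphi\mapsto\varphi\circ q$ embeds $\operatorname{End}(B_e)$ into $k\,q$, so $B_e$ is simple. This also gives exactly the non-splitting your induction needs. A retraction $r$ of $j\colon B_{e-1}\hookrightarrow B_e$ would give $j\circ r\circ q\in k\,q$, forcing the nontrivial idempotent $j\circ r$ to be a scalar.

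Degree considerations alone cannot replace this step. For instance, $\ker(r\circ q)\subset E$ would have slope $(g-1)\frac{p^e-p^{e-1}}{p^e-p^{e-1}+1}$. For $e\ge 2$ this is strictly less than $\mu(E)=(g-1)\frac{p^e-1}{p^e}$, so it does not contradict the stability of $E$.
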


\begin{proof} 
Denote $E:=F_*^e \cO_C$. The defining \ses for $B_e$ is
\[
0\longrightarrow\mathcal O_C \xrightarrow{u} E \xrightarrow{q} B_e \longrightarrow0.
\]
Since $g \ge2$, $E$ is a stable bundle by \cite[Theorem 2.2]{sun2008direct}. Thus $\operatorname{End}_C(E)=k$.
Since $E$ is locally free, applying $\mathcal Hom(E,-)$ induces the following \ses
\begin{equation} \label{lre2}
	0\longrightarrow E^\vee \xrightarrow{\,j\,}\mathcal E nd(E) \longrightarrow\mathcal H om(E,B_e) \longrightarrow 0,
	\qquad j(\lambda)=u\circ\lambda.
\end{equation}
We claim that \eqref{lre2} splits. For a local section $a$ of the $\mathcal O_C$-algebra $E$, let $m_a: E\to E$ denote multiplication by $a$. Define
\begin{equation} \label{lre3}
	R:\mathcal E nd(E)\longrightarrow E^\vee, \qquad R(T)(a)=\operatorname{tr}(T\circ m_a).
\end{equation}
For a local section $\beta$ of $\cO_X$, by the $\mathcal O_C$-linearity of trace, we obtain
\[
R(T)(\beta\cdot a)=\beta R(T)(a), \qquad R(\beta T)=\beta R(T).
\]
Thus, the map $R$ is $\cO_C$-linear. For a local section $\lambda$ of $E^\vee$, the endomorphism
$(u\circ\lambda)\circ m_a$ is given by
\begin{equation} \label{lre4}
	b\longmapsto\lambda(ab)\cdot 1,
\end{equation}
where $1$ is the unit section of $E$ and the dot denotes
the $\mathcal O_C$-module action on $E$.
Since the morphism \eqref{lre4} is of rank-one, its trace is $(\lambda\circ m_a)(1)=\lambda(a)$. Therefore
\[
R(j(\lambda))(a)=\lambda(a),
\]
so $R\circ j=\operatorname{id}_{E^\vee}$, proving the claim.
Thus, taking global sections of the split exact sequence \eqref{lre2}, we obtain a surjection
\[
\operatorname{End}_C(E) \longrightarrow\operatorname{Hom}_C(E,B_e), \qquad T\longmapsto q\circ T.
\]
Since $\operatorname{End}_C(E)=k$ and $\operatorname{id}_E$ maps to $q\ne0$, it follows that $\operatorname{Hom}_C(E,B_e)=kq$.

Thus, for any $\varphi\in\operatorname{End}_C(B_e)$, there exists $c\in k$ such that $\varphi\circ q=cq$. As $q$ is surjective, $\varphi=c \cdot \operatorname{id}_{B_e}$. Hence $\operatorname{End}_C(B_e)=k$, so $B_e$ is simple. Consequently, $B_e$ is indecomposable.
\end{proof}

\begin{remark}[On the splitting of the sequence \eqref{B_eseq3} on curves]
		By Lemma \ref{splitlem}, the sequence \eqref{B_eseq3} splits when $X=\bP^1$ or $X$ is an ordinary elliptic curve. 
		By Proposition \ref{B_eindecomposability}, $B_e$ is indecomposable if $g \ge 2$. Thus, the sequence \eqref{B_eseq3} is nonsplit whenever $g \ge 2$.
	\end{remark}

\begin{remark} [Ulrich but not strongly semistable]
	Let $C$ be a smooth projective curve of genus $g\ge 2$ over an algebraically closed field in characteristic $p>2$. Let $H$ be a very ample divisor on $C$. Then it is known that $F^*B_C^1$ is not semistable \cite[Remark 4.1.2(2), p. 119]{raynaud1982sections}. 
Consequently, $F^* (B^1_C(H)) \cong (F^*B_C^1)(pH)$ is also not semistable. Therefore, when $C$ is ordinary, $B^1_C(H)$ is Ulrich \wrt $H$ by Theorem \ref{thm3.1} but not strongly semistable.
\end{remark}

\subsection{Sheaves of exact and closed forms on smooth projective varieties}
\label{threetwo}
Let $X$ be a smooth projective variety of dimension $n \ge 1$. We show that $B^i_X$ being ACM forces $X$ to be ordinary, and obtain a classification result for varieties with initialized ACM $B^1_X$. For $n \ge 3$ and $i \ge 2$,  $B^i_X$ and $Z^i_X$ need not be ACM even when $X$ is ordinary or Frobenius split. Finally, sufficiently high Frobenius direct images of twisted $B^i_X$ and $Z^i_X$ are shown to be ACM.

By substituting $j=1$ in \eqref{cartierduality}, we obtain
\[
B_X^1\cong(B_X^n)^\vee\otimes\omega_X.
\]
Therefore, by Serre duality,
\begin{equation} \label{lre1}
H^j(X,B_X^n\otimes L^t) \cong H^j\left(X,(B_X^1)^\vee\otimes\omega_X\otimes L^t\right) \cong H^{n-j}(X,B_X^1\otimes L^{-t})^\vee
\end{equation}
for $1\leq j\leq n-1$ and for any line bundle $L$ on $X$. As a corollary, we get the following.
\begin{cor} \label{1nequi}
	Let $X$ be a smooth projective variety, and let $H$ be a very ample divisor. Then $B^1_X$ is ACM \wrt $H$ if and only if $B^n_X$ is ACM \wrt $H$.
\end{cor}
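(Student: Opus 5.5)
The plan is to read off the equivalence directly from the Serre-duality isomorphism \eqref{lre1}, with $L=\mathcal O_X(H)$. Set $n=\dim X$. If $n=1$, the ACM condition (Definition \ref{uldefin}) ranges over $1\le j\le n-1$, which is empty. Hence both $B^1_X$ and $B^n_X=B^1_X$ are trivially ACM, and there is nothing to prove. So I assume $n\ge 2$.

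First I recall how \eqref{lre1} arises. Putting $j=1$ in \eqref{cartierduality} gives $B_X^1\cong (B_X^n)^\vee\otimes\omega_X$. Since these are locally free, dualizing yields $B_X^n\cong (B_X^1)^\vee\otimes\omega_X$. Serre duality on the smooth projective $X$ then gives, for every $t\in\mathbb Z$ and $1\le j\le n-1$,
\[
H^j\bigl(X,B_X^n(tH)\bigr)\cong H^{n-j}\bigl(X,B_X^1(-tH)\bigr)^\vee .
\]

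Now assume $B^1_X$ is ACM with respect to $H$. Fix $1\le j\le n-1$ and $t\in\mathbb Z$. Then $1\le n-j\le n-1$ and $-t\in\mathbb Z$, so the right-hand side vanishes. Hence $H^j(X,B^n_X(tH))=0$ for all such $j,t$, which says $B^n_X$ is ACM.

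The converse uses the same isomorphism read backwards. Given $1\le j'\le n-1$ and $t'$, I set $j=n-j'$ and $t=-t'$. The isomorphism then identifies $H^{j'}(X,B^1_X(t'H))$ with the dual of $H^{n-j'}(X,B^n_X(-t'H))$, which vanishes when $B^n_X$ is ACM. Equivalently, one can start from the symmetric identity $B^n_X\cong (B^1_X)^\vee\otimes\omega_X$ and apply Serre duality again.

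There is no real obstacle here. The only point to watch is that the map $j\mapsto n-j$, $t\mapsto -t$ is a bijection of the index set $\{1,\dots,n-1\}\times\mathbb Z$ onto itself, and that local freeness of $B^i_X$ is available, so that dualizing and Serre duality for vector bundles apply.
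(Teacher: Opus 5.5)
Your proposal is correct and is essentially the paper's argument. The paper also reads the corollary directly off the Serre-duality isomorphism \eqref{lre1}, which comes from the Cartier duality $B_X^n\cong(B_X^1)^\vee\otimes\omega_X$, together with the fact that $(j,t)\mapsto(n-j,-t)$ permutes $\{1,\dots,n-1\}\times\mathbb Z$; your explicit handling of the vacuous case $n=1$ and of the reverse direction only spells out what the paper leaves implicit.
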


Combining the preceding corollary with \cite[Theorem 3.1]{MR3920911}, we obtain the following. 

\begin{proposition} \label{acmjoshiprop}
	Let $X$ be a smooth projective and  Frobenius split variety of dimension $n$. Then $B^1_X$ and $B^n_X$ are ACM \wrt any very ample line bundle. 
\end{proposition}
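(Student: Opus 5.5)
The plan is to reduce everything to $B^1_X$ and then transfer the result to $B^n_X$ through Corollary \ref{1nequi}. The statement says the two ACM properties are equivalent, so it is enough to show that $B^1_X$ is ACM with respect to an arbitrary very ample divisor $H$. That is, we want
\[
H^j\bigl(X,B^1_X(tH)\bigr)=0 \quad\text{for } 1\le j\le n-1 \text{ and all } t\in\bZ.
\]

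This is Joshi's theorem \cite[Theorem 3.1]{MR3920911}, and it could simply be quoted. For completeness, here is the argument I would give. Fix a Frobenius splitting $\sigma$, so that $F_*\cO_X\cong\cO_X\oplus B^1_X$. Twisting by $\cO_X(tH)$ and using the projection formula gives
\[
F_*\bigl(\cO_X(ptH)\bigr)\cong \cO_X(tH)\oplus B^1_X(tH).
\]
Since $F$ is finite, taking cohomology yields
\[
H^j\bigl(X,\cO_X(ptH)\bigr)\cong H^j\bigl(X,\cO_X(tH)\bigr)\oplus H^j\bigl(X,B^1_X(tH)\bigr).
\]
So it suffices to show $H^j(X,\cO_X(ptH))=0$ for $1\le j\le n-1$ and every $t$, because $B^1_X(tH)$ sits as a direct summand.

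The vanishing of these line bundle cohomologies follows from iterating the splitting. Applying the same decomposition repeatedly shows that $H^j(X,\cO_X(mH))$ injects into $H^j(X,\cO_X(p^e mH))$ for all $e\geq 1$.
\begin{itemize}
\item[(a)] For $m>0$, Serre vanishing gives $H^j(X,\cO_X(p^emH))=0$ once $e\gg0$, hence $H^j(X,\cO_X(mH))=0$.
\item[(b)] For $m<0$, Serre duality identifies $H^j(X,\cO_X(p^emH))$ with $H^{n-j}(X,\omega_X(-p^emH))^\vee$. This vanishes for $e\gg0$ whenever $n-j\ge1$, which covers $j\le n-1$.
\item[(c)] The case $m=0$ is where the proof needs care, because Frobenius iteration does not move the twist. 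Fortunately it is not needed: the direct summand argument only involves the exponents $m=pt$, and $t=0$ is the only value giving $m=0$.
\end{itemize}

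The main obstacle is therefore the case $t=0$, that is, showing $H^j(X,B^1_X)=0$ for $1\le j\le n-1$. Here I would use the long exact sequence of \eqref{Fsplitseq}. The map $H^j(\cO_X)\to H^j(F_*\cO_X)$ is the Frobenius action on $H^j(\cO_X)$, and the splitting makes it injective. Since $H^j(\cO_X)$ is finite-dimensional, an injective $p$-linear endomorphism of it is bijective. Hence every map $H^j(\cO_X)\to H^j(F_*\cO_X)$ is an isomorphism, and the long exact sequence then forces $H^j(X,B^1_X)=0$ for all $j$; in particular the required vanishing holds for $1\le j\le n-1$.

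Combining this with the twisted cases, $B^1_X$ is ACM with respect to $H$, and Corollary \ref{1nequi} gives the same conclusion for $B^n_X$.
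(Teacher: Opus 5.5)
Your proposal is correct and matches the paper's proof, which likewise takes the $B^1_X$ case from Joshi's Theorem 3.1 and transfers it to $B^n_X$ through Corollary \ref{1nequi}. Your optional self-contained proof of Joshi's vanishing is also sound. For $t\neq 0$, the direct-summand trick together with the iterated split injections $H^j(X,\cO_X(mH))\hookrightarrow H^j(X,\cO_X(p^emH))$ does the job. You correctly treat $t=0$ separately, using bijectivity of Frobenius on $H^j(X,\cO_X)$; there the reduction to line bundles would genuinely fail, e.g.\ for ordinary abelian varieties.
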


Let $H$ be a very ample divisor on $X$. Assume that the pair $(X,H)$ is subcanonical; that is, $\omega_X\cong\cO_X(\alpha H)$ for some integer $\alpha$. The following result generalizes \cite[Theorem 3.4]{MR3920911}, which applies when $\alpha=0$.

\begin{theorem} \label{subcan}
	Assume $n \ge 2$. Let $H$ be a very ample divisor on $X$ such that $\omega_X \cong \cO_X(\alpha H)$ for some integer $\alpha$. Then $B_X^1$ (equivalently, $B_X^n$) is ACM with respect to $H$ if and only if $X$ is Frobenius split.
\end{theorem}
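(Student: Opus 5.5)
The direction ``Frobenius split $\Rightarrow$ ACM'' is Proposition \ref{acmjoshiprop}, so only the converse needs an argument. The natural criterion is the standard one: $X$ is Frobenius split if and only if the sequence \eqref{Fsplitseq} splits. The extension class of \eqref{Fsplitseq} lives in $\operatorname{Ext}^1(B^1_X,\mathcal O_X)\cong H^1\bigl(X,(B^1_X)^\vee\bigr)$. It therefore suffices to show that this group vanishes, or at least that the class maps to zero in it. We will translate this vanishing into a statement about $B^n_X$ using \eqref{cartierduality}.

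By \eqref{cartierduality} with $j=n$, we have $(B^1_X)^\vee\cong B^n_X\otimes\omega_X^{-1}\cong B^n_X(-\alpha H)$. Hence
\[
\operatorname{Ext}^1(B^1_X,\mathcal O_X)\cong H^1\bigl(X,B^n_X(-\alpha H)\bigr).
\]
Assume $B^1_X$ is ACM. By Corollary \ref{1nequi}, $B^n_X$ is then ACM as well. Since $n\ge 2$, the index $1$ lies in the range $1\le 1\le n-1$, so $H^1\bigl(X,B^n_X(tH)\bigr)=0$ for every $t\in\mathbb Z$, and in particular for $t=-\alpha$. Consequently the extension class vanishes, \eqref{Fsplitseq} splits, and $X$ is Frobenius split.

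Alternatively, one can argue directly via Serre duality as in \eqref{lre1}:
\[
H^1\bigl(X,(B_X^1)^\vee\bigr)\cong H^{n-1}\bigl(X,B^1_X\otimes\omega_X\bigr)^\vee=H^{n-1}\bigl(X,B^1_X(\alpha H)\bigr)^\vee,
\]
and the right-hand side vanishes by the ACM hypothesis because $1\le n-1$. This is exactly where the subcanonical assumption enters: the twist by $\omega_X$ must be a multiple of $H$ for the ACM vanishing to apply. The main point to check carefully is the identification of Frobenius splitting with the vanishing of the extension class in $\operatorname{Ext}^1(B^1_X,\mathcal O_X)$. This holds because a splitting of $\mathcal O_X\to F_*\mathcal O_X$ as $\mathcal O_X$-modules is exactly a splitting of \eqref{Fsplitseq}, which is the definition of Frobenius splitting used in the paper. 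The hypothesis $n\ge2$ is needed so that $H^1$ and $H^{n-1}$ are genuinely intermediate cohomology groups.
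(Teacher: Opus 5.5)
Your proposal is correct and is essentially the paper's proof. The paper argues exactly as in your ``alternative'' paragraph: Serre duality gives $\operatorname{Ext}^1(B_X^1,\mathcal O_X)\cong H^{n-1}(X,B_X^1(\alpha H))^\vee=0$, so \eqref{Fsplitseq} splits, with the $B^n_X$ case reduced via Corollary~\ref{1nequi} and the converse taken from Proposition~\ref{acmjoshiprop}. Your main route via $(B_X^1)^\vee\cong B_X^n(-\alpha H)$ is the same computation repackaged, since Corollary~\ref{1nequi} is itself just Serre duality combined with \eqref{cartierduality}.
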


\begin{proof}
	If $B_X^1$ is ACM \wrt $H$, then
	\[
	H^{n-1}(X,B_X^1\otimes\omega_X)=H^{n-1}(X,B_X^1(aH))=0.
	\]
	Since $B_X^1$ is locally free, Serre duality gives
	\[
	\operatorname{Ext}^1_{\mathcal O_X}(B_X^1,\mathcal O_X) \simeq H^1(X,(B_X^1)^\vee) \simeq H^{n-1}(X,B_X^1\otimes\omega_X)^\vee=0.
	\]
	Therefore the sequence \eqref{Fsplitseq} splits, and thus, $X$ is Frobenius split. If $B^n_X$ is is ACM \wrt $H$, then Corollary \ref{1nequi} gives $X$ is Frobenius split. The converse follows from Proposition \ref{acmjoshiprop}.
\end{proof}

If $X$ is Frobenius split, then $H^i(X,B_X^1)=0$ for every integer $i\ge0$ by \eqref{Fsplitseq}, and hence $H^0(X,B_X^n)=0$ by \eqref{lre1}. Moreover, applying Proposition \ref{nonvanishingexactforms} with $i=1$ gives $H^0(X,B_X^1(H))\ne0$. Thus, $B_X^1(H)$ is initialized whenever $X$ is Frobenius split. Together with Theorem \ref{subcan}, this yields the following result:

\begin{cor} \label{inicor}
	Let $H$ be a very ample divisor on $X$ such that $\omega_X \cong \cO_X(\alpha H)$ for some integer $\alpha$. Let $a \in \bZ$. Then $B_X^1(aH)$ is initialized ACM with respect to $H$ if and only if $a=1$ and $X$ is Frobenius split.
\end{cor}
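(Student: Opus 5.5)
We need to prove Corollary \ref{inicor}: for subcanonical $(X,H)$, $B_X^1(aH)$ is initialized ACM if and only if $a=1$ and $X$ is Frobenius split. Recall that initialized means $H^0(X,E(-H))=0$ and $H^0(X,E)\neq 0$. Note that $n\ge 2$ is implicit, since Theorem \ref{subcan} is stated for $n\ge2$.

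The "if" direction is already in the paragraph preceding the statement. If $X$ is Frobenius split, Theorem \ref{subcan} (or Proposition \ref{acmjoshiprop}) gives that $B_X^1$ is ACM, hence so is every twist $B^1_X(H)$. By \eqref{Fsplitseq} we have $H^0(X,B^1_X)=0$. Proposition \ref{nonvanishingexactforms} with $i=m=1$ (so $pm\ge 2$) gives $H^0(X,B^1_X(H))\neq0$. Together these say exactly that $B_X^1(H)$ is initialized.

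For the converse, assume $B^1_X(aH)$ is initialized ACM. The ACM property is invariant under twisting by multiples of $H$, so $B^1_X$ itself is ACM, and Theorem \ref{subcan} shows $X$ is Frobenius split. It remains to pin down $a$. Frobenius splitting makes \eqref{Fsplitseq} split, so $H^0(X,B^1_X)=0$. Initializedness requires
\[
H^0\bigl(X,B^1_X((a-1)H)\bigr)=0 \quad\text{and}\quad H^0\bigl(X,B^1_X(aH)\bigr)\neq0.
\]

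To force $a=1$ we use two monotonicity facts. First, if $a\le 0$, then $B^1_X(aH)\hookrightarrow B^1_X$ via a section of $\mathcal O_X(-aH)$, which is effective or trivial. Hence $H^0(X,B^1_X(aH))\subseteq H^0(X,B^1_X)=0$, contradicting nonvanishing. Second, if $a\ge2$, then $a-1\ge1$ and Proposition \ref{nonvanishingexactforms} with $i=1$, $m=a-1$ (so $pm\ge2$) gives $H^0(X,B^1_X((a-1)H))\neq0$, contradicting initializedness. Therefore $a=1$.

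There is no real obstacle here. The only step needing care is the injection $B^1_X(aH)\hookrightarrow B^1_X$ for $a\le0$; it holds because $B^1_X$ is locally free, hence torsion-free, and $-aH$ is effective since $H$ is very ample.
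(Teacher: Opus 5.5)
Your proof is correct and follows the paper's route. Frobenius splitting gives $H^0(X,B^1_X)=0$, Proposition~\ref{nonvanishingexactforms} gives $H^0(X,B^1_X(H))\neq 0$, and Theorem~\ref{subcan} turns ACM into Frobenius splitting. You also spell out why the initialized twist must be $a=1$, which the paper leaves implicit: the injection $B^1_X(aH)\hookrightarrow B^1_X$ rules out $a\le 0$, and Proposition~\ref{nonvanishingexactforms} with $m=a-1$ rules out $a\ge 2$. You are right that $n\ge 2$ is needed: for a supersingular elliptic curve, $B^1_X$ itself is initialized, so the statement fails for $n=1$.
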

By Theorem \ref{subcan}, $B_X^n$ is ACM when $X$ is Frobenius split. Thus, there exists a unique integer $a_0$ such that $B_X^n(a_0H)$ is initialized ACM. This yields a result analogous to Corollary \ref{inicor}, characterizing when $B_X^n(aH)$ is initialized ACM.

Thus, the above results provide numerous examples. For $n\ge2$, a general smooth hypersurface $X\subset\mathbb P^{n+1}$ of degree $d\le n+1$ is known to be Frobenius split, and hence the bundle $B_X^1$ is ACM with respect to the hyperplane divisor.

The following result shows that the ACM property of $B^i_X$ forces ordinarity without any restriction on $\omega_X$.

\begin{proposition} \label{forcedordinarity}
	Let $X$ be a smooth projective variety of dimension $n\geq 2$. If $B_X^i$ is ACM with respect to a very ample divisor $H$ for every
	$1\leq i\leq \left\lceil\frac{n}{2}\right\rceil$, then $X$ is ordinary.
\end{proposition}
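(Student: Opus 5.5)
The plan is to verify Definition \ref{ordinarydef} directly: we must show $H^q(X,B_X^j)=0$ for all $q\ge 0$ and all $1\le j\le n$. I will split this into the intermediate range $1\le q\le n-1$, which follows formally from the hypothesis and duality, and the two extreme degrees $q=0,n$, which need a semilinear-algebra argument with Frobenius and the Cartier operator.

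\emph{Intermediate cohomology.} Taking $t=0$ in the ACM hypothesis gives $H^q(X,B_X^i)=0$ for $1\le q\le n-1$ and $1\le i\le \lceil n/2\rceil$. By the Cartier duality \eqref{cartierduality}, $B_X^j\cong (B_X^{n-j+1})^\vee\otimes\omega_X$. Serre duality then gives $H^q(X,B_X^j)\cong H^{n-q}(X,B_X^{n-j+1})^\vee$, exactly as in \eqref{lre1}. Hence the intermediate vanishing also holds for every $j$ with $n-j+1\le\lceil n/2\rceil$, that is, $j\ge \lfloor n/2\rfloor+1$. Since $\lceil n/2\rceil\ge\lfloor n/2\rfloor$, the two ranges cover all $1\le j\le n$. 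The same duality shows $H^n(X,B_X^j)\cong H^0(X,B_X^{n-j+1})^\vee$. So it remains only to prove $H^0(X,B_X^j)=0$ for all $1\le j\le n$.

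\emph{Degree zero, by induction on $j$.} For $j=1$, use \eqref{Fsplitseq}. Its long exact sequence identifies $H^0(B_X^1)$ with $\ker(F\colon H^1(\mathcal O_X)\to H^1(\mathcal O_X))$. The vanishing $H^q(B_X^1)=0$ for $1\le q\le n-1$ shows that $F$ is surjective on $H^q(\mathcal O_X)$ for $1\le q\le n-1$ and injective on $H^q(\mathcal O_X)$ for $2\le q\le n$. A $p$-linear surjective (or injective) endomorphism of a finite-dimensional vector space over the perfect field $k$ is bijective. Hence $F$ is bijective on every $H^q(\mathcal O_X)$ with $q\ge1$, which gives $H^0(B_X^1)=0$ (and also $H^n(B_X^1)=0$).

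For the inductive step, assume $H^0(B_X^j)=0$. Apply cohomology to \eqref{cartierseq}. Since $H^0(B_X^j)=0$, the Cartier map $C\colon H^0(Z_X^j)\to H^0(\Omega_X^j)$ is injective and $p^{-1}$-linear. On the other hand, $H^0(Z_X^j)\subseteq H^0(F_*\Omega^j_X)=H^0(\Omega^j_X)$. Comparing dimensions forces $H^0(Z_X^j)=H^0(\Omega_X^j)$, that is, every global $j$-form is closed. Feeding this into the long exact sequence of \eqref{defseq} with $i=j$ shows that $H^0(F_*\Omega^j_X)\to H^0(B_X^{j+1})$ is zero, so
\[
H^0(B_X^{j+1})\cong\ker\bigl(H^1(Z_X^j)\to H^1(F_*\Omega_X^j)\bigr).
\]
Next, since $n\ge2$ we have $H^1(B_X^j)=0$ from the intermediate step, and also $H^0(B_X^j)=0$. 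The sequence \eqref{cartierseq} then shows that $C\colon H^1(Z_X^j)\to H^1(\Omega^j_X)$ is injective.

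\emph{Main obstacle.} The hard step is to show that the inclusion-induced map $\iota\colon H^1(Z_X^j)\to H^1(F_*\Omega_X^j)=H^1(\Omega_X^j)$ is also injective. In the case $j=0$ this map is just $F$, which is why that case was easy. My first approach is a dimension count. Both $C$ and $\iota$ map $H^1(Z^j_X)$ to the same space; the injectivity of $C$ gives $h^1(Z_X^j)\le h^1(\Omega_X^j)$, and I would try to use \eqref{defseq} together with the vanishing of $H^1(B_X^{j+1})$ to get the reverse inequality. Failing that, I would exploit the composite $C\circ\iota^{-1}$ on images to produce a semilinear endomorphism that must be bijective, forcing $\ker\iota=0$. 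Once this is established, $H^0(B_X^{j+1})=0$, the induction closes, and together with the duality step we get $H^q(B_X^j)=0$ for all $q$ and $j$, so $X$ is ordinary.
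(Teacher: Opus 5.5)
Your reduction to $H^0(X,B_X^j)=0$ (intermediate vanishing plus Cartier/Serre duality for $H^n$) matches the paper, and your Frobenius argument for $j=1$ is fine. The inductive step, however, has two problems.

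First, the sentence ``Comparing dimensions forces $H^0(Z_X^j)=H^0(\Omega_X^j)$'' does not follow from what you cite. Injectivity of $C$ on $H^0(Z^j_X)$ and the inclusion $H^0(Z_X^j)\subseteq H^0(F_*\Omega_X^j)$ both give only the upper bound $h^0(Z_X^j)\le h^0(\Omega_X^j)$. You need a lower bound, i.e.\ surjectivity of $C\colon H^0(Z^j_X)\to H^0(\Omega^j_X)$, and that comes from $H^1(X,B_X^j)=0$.

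Second, the step you call the main obstacle, injectivity of $\iota\colon H^1(Z_X^j)\to H^1(F_*\Omega_X^j)$, is only announced as a plan, so as written the proof is incomplete. Your first plan does work if carried out. $H^1(X,B_X^{j+1})=0$ makes $\iota$ surjective. Injectivity of $C$ on $H^1$ gives $h^1(Z_X^j)\le h^1(\Omega_X^j)=h^1(F_*\Omega_X^j)$. So $\iota$ is a surjection between spaces of equal finite dimension, hence bijective.

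The missing idea is that once you use $H^1(X,B_X^j)=0$ at the level of $H^0$, the whole induction is unnecessary. The long exact sequence of \eqref{cartierseq} gives $0\to H^0(B_X^j)\to H^0(Z_X^j)\to H^0(\Omega_X^j)\to 0$, so $h^0(Z_X^j)=h^0(B_X^j)+h^0(\Omega_X^j)$. Meanwhile \eqref{defseq} gives $h^0(Z_X^j)\le h^0(F_*\Omega_X^j)=h^0(\Omega_X^j)$. Hence $H^0(X,B_X^j)=0$ for every $1\le j\le n$ at once. This needs no inductive hypothesis, no separate treatment of $j=1$, and no analysis of $H^1(Z^j_X)$, and it is exactly the paper's proof.
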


\begin{proof}
	For $1\leq i\leq n$, from \eqref{defseq} and \eqref{cartierseq}, we have the following exact sequences
	\begin{equation} \label{e341}
		0\longrightarrow B_X^i\longrightarrow Z_X^i \longrightarrow \Omega_X^i\longrightarrow 0
	\end{equation}
	\begin{equation} \label{e342}
		0\longrightarrow Z_X^i\longrightarrow F_*\Omega_X^i \longrightarrow B_X^{i+1}\longrightarrow 0,
	\end{equation}
	where $B_X^{n+1}=0$.  
	We have $B_X^j\cong(B_X^{n-j+1})^\vee\otimes\omega_X$ by \eqref{cartierduality}. Therefore, by Serre duality, we obtain
	\begin{equation} \label{acmupperhalf}
		H^q(X, B_X^j(kH)) \cong H^q(X, (B_X^{n-j+1})^\vee\otimes\omega_X (kH)) \cong H^{n-q}(X, (B_X^{n-j+1})(-kH))
	\end{equation}
	Since $B_X^i$ is ACM for every
	$1\leq i\leq \left\lceil\frac{n}{2}\right\rceil$, by \eqref{acmupperhalf}, it follows that  $B_X^i$ is ACM for all $1 \le i \le n$. Since $n\geq2$, this gives
	\begin{equation} \label{e343}
		H^q(X,B_X^i)=0, \qquad 1\leq q\leq n-1.
	\end{equation}
	Therefore, taking the \les in cohomology for \eqref{e341}, we obtain
	\[
	0\longrightarrow H^0(X,B_X^i) \longrightarrow H^0(X,Z_X^i) \longrightarrow H^0(X,\Omega_X^i) \longrightarrow 0.
	\]
	On the other hand, \eqref{e342} gives an injection $H^0(X,Z_X^i)\longrightarrow H^0(X,F_*\Omega_X^i)$.
	Since $F$ is a finite map, $h^0(X,F_*\Omega_X^i)=h^0(X,\Omega_X^i)$. This gives
	\[
	h^0(X,\Omega_X^i)+h^0(X,B_X^i)=h^0(X,Z_X^i)\leq h^0(X,\Omega_X^i).
	\]
	Hence $H^0(X,B_X^i)=0$ for every $i$. Thus, substituting $q=n$, $j=i$, and $k=0$ in \eqref{acmupperhalf} gives
	\[
	H^n(X,B_X^i)\cong H^0(X,B_X^{n-i+1})=0.
	\]
	
	Thus $H^q(X,B_X^i)=0$ for every $q\geq0$ and every $1\leq i\leq n$. Therefore, $X$ is ordinary.
\end{proof}

\begin{cor} \label{ordiB1}
Assume $n=2$. Then $B^1_X$ is ACM for some very ample divisor if and only if $X$ is ordinary.
\end{cor}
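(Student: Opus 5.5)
For $n=2$ we have $\lceil n/2\rceil=1$, so both directions come quickly from what has already been proved.

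\emph{Forward direction.} Suppose $B^1_X$ is ACM with respect to some very ample divisor $H$. Then the hypothesis of Proposition \ref{forcedordinarity} holds for $n=2$: it only asks that $B^i_X$ be ACM for $1\le i\le \lceil 2/2\rceil=1$. Hence $X$ is ordinary.

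\emph{Converse.} Assume $X$ is ordinary, and fix any very ample divisor $H$. On a surface the ACM condition for $B^1_X$ concerns only $H^1$, so we must show
\[
H^1\bigl(X,B^1_X(tH)\bigr)=0 \qquad \text{for all } t\in\mathbb Z.
\]
Ordinarity gives only $H^q(X,B^1_X)=0$ and $H^q(X,B^2_X)=0$ for all $q$, i.e.\ the vanishing at $t=0$. We must propagate this to all twists. I would use the sequence $0\to\mathcal O_X\to F_*\mathcal O_X\to B^1_X\to 0$ twisted by $tH$. By the projection formula $F_*\mathcal O_X\otimes\mathcal O_X(tH)\cong F_*\mathcal O_X(ptH)$, so $H^1(X,B^1_X(tH))$ sits in the exact sequence
\[
H^1\bigl(\mathcal O_X(tH)\bigr)\xrightarrow{F^*} H^1\bigl(\mathcal O_X(ptH)\bigr)\to H^1\bigl(B^1_X(tH)\bigr)\to H^2\bigl(\mathcal O_X(tH)\bigr)\xrightarrow{F^*} H^2\bigl(\mathcal O_X(ptH)\bigr).
\]
Thus the vanishing is equivalent to two statements: Frobenius $F^*$ is surjective on $H^1$ in the twist $t\mapsto pt$, and injective on $H^2$.

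Here is the main obstacle. Ordinarity is a statement at $t=0$; it says that $F$ is bijective on $H^q(\mathcal O_X)$. For $t\ne0$ I would try to use the Frobenius-splitting-type consequences of ordinarity on surfaces, but ordinarity does not imply Frobenius splitting. Worse, for a general ordinary surface of general type one expects $H^1(\mathcal O_X(ptH))\ne0$ for suitable negative $t$, so the converse looks false as stated.

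The statement is an ``if and only if'' with ``some very ample divisor''. So the converse direction actually intended may rest on a further unstated input, for example a subcanonical hypothesis allowing Theorem \ref{subcan}. Absent that, I would present the forward implication as the substantive content, which follows directly from Proposition \ref{forcedordinarity}. For the converse I would attempt the cohomology sequence above and flag that it requires control of Frobenius on twisted cohomology that ordinarity alone does not obviously give.
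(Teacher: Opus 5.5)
\paragraph{Forward direction.} This matches the paper. For $n=2$ we have $\lceil n/2\rceil=1$, so ACM of $B^1_X$ alone triggers Proposition~\ref{forcedordinarity} and gives ordinarity.

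\paragraph{Converse.} Here there is a genuine gap, and your suspicion that the converse is false is mistaken. You fix an arbitrary very ample $H$ and try to prove $H^1(X,B^1_X(tH))=0$ for all $t$. That is the ``every very ample divisor'' statement, which ordinarity alone need not give, as you correctly sense. But the corollary asks only for \emph{some} very ample divisor, and you never use that freedom. The missing idea is to replace $H$ by a large multiple, which removes all the problematic nonzero twists at once.

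\begin{itemize}
\item By Serre vanishing, $H^1(X,B^1_X(rH))=0$ for $r\gg0$.
\item Since $B^1_X$ is locally free, Serre duality gives $H^1(X,B^1_X(rH))\cong H^1\bigl(X,(B^1_X)^\vee\otimes\omega_X(-rH)\bigr)^\vee$, and this vanishes for $r\ll0$. Hence there is $r_0$ with $H^1(X,B^1_X(rH))=0$ whenever $|r|>r_0$.
\item Ordinarity supplies exactly the one remaining case you already identified: $H^1(X,B^1_X)=0$.
\item Take $m>r_0$. For $t\neq0$ we have $|tm|>r_0$, and for $t=0$ ordinarity applies. So $H^1\bigl(X,B^1_X(t\cdot mH)\bigr)=0$ for all $t\in\mathbb Z$.
\end{itemize}

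On a surface the ACM condition involves only $H^1$, so $B^1_X$ is ACM with respect to the very ample divisor $mH$. This is the paper's argument. No subcanonical hypothesis and no analysis of Frobenius on twisted cohomology $H^q(\mathcal O_X(ptH))$ is needed. The nonvanishing you expect for certain negative $t$ can occur only for finitely many twists of a fixed $H$, and passing to $mH$ skips all of them.
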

\begin{proof}
	If $X$ is ordinary, we have $H^i(X, B^1_X)=0$ for all $i$. Let $H$ be a very ample divisor. Then by Serre duality, we have 
	\[
	H^1(X,B^1_X(rH))=0 \qquad (|r| \gg 0).
	\]
	Thus, for sufficiently large integer $m \gg 0$, $B^1_X$ is ACM \wrt $mH$. The converse follows from Proposition \ref{forcedordinarity}.
\end{proof}

By Proposition \ref{acmjoshiprop}, if $X$ is Frobenius split, then both $B^1_X$ and $B^n_X$ are ACM \wrt any very ample divisor. However, $B^i_X$ for $2 \le i \le n-1$ may not be ACM. The following gives an example.

\begin{example} [Frobenius split Calabi-Yau threefold with non-ACM $B^2_X$] 
	\label{nonacmexample}
	Consider the Fermat quintic threefold $X=\left\{x_0^5+x_1^5+x_2^5+x_3^5+x_4^5=0\right\}\subset\mathbb P_k^4$ over $k=\overline{\mathbb F}_{11}$. Let $\mathcal O_X(1)$ be the very ample line bundle $\cO_{\bP^4}(1)\vert_X$. 
	It is smooth, and by adjunction, $\omega_X\cong\mathcal O_X$. Moreover, $X$ is Frobenius split. Indeed, the coefficient of
	$(x_0x_1x_2x_3x_4)^{10}$ in $(x_0^5+\cdots+x_4^5)^{10}$ is $\frac{10!}{(2!)^5}\equiv1\pmod{11}$. Thus, by \cite[Example 1.5, p. 185]{schwede2026frobenius}, $X$ is Frobenius split. 
	Nevertheless, $B_X^2$ is not ACM with respect to $\cO_X(1)$ as we now show.
	
	Let $i:X\hookrightarrow\mathbb P^4$ be the inclusion. We consider the \ses
	\begin{equation} \label{e346}
		0\longrightarrow \mathcal O_{\mathbb P^4}(-5) \longrightarrow \mathcal O_{\mathbb P^4} \longrightarrow i_*\mathcal O_X \longrightarrow 0.
	\end{equation}
	Twisting \eqref{e346} by $\mathcal O_{\mathbb P^4}(m)$ and taking the \les in cohomology, we obtain
	\begin{equation} \label{e350}
		H^q(X,\mathcal O_X(m))=0 \qquad(q>0,\ m>0), \qquad H^3(X,\mathcal O_X)\cong k.
	\end{equation} 
	
	The restricted Euler and conormal sequences are
	\begin{equation} \label{e347}
		0\longrightarrow \Omega_{\mathbb P^4}^1(m)|_X \longrightarrow \mathcal O_X(m-1)^{\oplus5} \longrightarrow \mathcal O_X(m) \longrightarrow0,
	\end{equation}
	\begin{equation} \label{e348}
		0\longrightarrow \mathcal O_X(m-5) \longrightarrow \Omega_{\mathbb P^4}^1(m)|_X \longrightarrow \Omega_X^1(m)
		\longrightarrow0.
	\end{equation}
	For $m\geq2$, taking the \les in cohomology for \eqref{e347}, and using \eqref{e350}, we get
	\begin{equation} \label{ee5027}
		H^q\bigl(X,\Omega_{\mathbb P^4}^1(m)|_X\bigr)=0 \qquad(q=2,3).
	\end{equation}
	The long exact sequence in cohomology associated with \eqref{e348}, together with \eqref{ee5027} gives
	\begin{equation}
		H^2\bigl(X,\Omega_X^1(m)\bigr) \cong H^3\bigl(X,\mathcal O_X(m-5)\bigr) \qquad(m\geq2).
	\end{equation}
	In particular, $H^2\bigl(X,\Omega_X^1(5)\bigr) \cong H^3\bigl(X,\mathcal O_X\bigr) \cong k$. Using the projection formula, we get
	\[
	H^2\bigl(X,(F_*\Omega_X^1)(5)\bigr) \cong H^2(X, F_*(\Omega_X^1 \otimes F^* \cO_X(5))) \cong H^2\bigl(X,\Omega_X^1(55)\bigr)=0.
	\]
	Moreover, twisting \eqref{Fsplitseq}
	by $\mathcal O_X(5)$, and taking the \les in cohomology gives $H^3(X,B_X^1(5))=0$, since
	$H^3(X,\mathcal O_X(55))=0$.
	Thus, twisting \eqref{defseq} and \eqref{cartierseq} by $\mathcal O_X(5)$, and taking cohomology now give surjections
	\[
	H^1\bigl(X,B_X^2(5)\bigr)
	\twoheadrightarrow H^2\bigl(X,Z_X^1(5)\bigr)
	\twoheadrightarrow H^2\bigl(X,\Omega_X^1(5)\bigr)
	\cong k.
	\]
	Hence $B_X^2$ is not ACM with respect to $\mathcal O_X(1)$.
\end{example}

By \eqref{defseq}, we have $Z^n_X =F_* \omega_X$. Thus, when $X=\bP^n$, $Z^n_X$ is ACM \wrt any very ample divisor on $\bP^n$. However, even if $X$ is Frobenius split, $Z^i_X$ need not be ACM in general. For example, when $X=\bP^2$, we have $H^1(\bP^2, Z^1_{\bP^2}) \ne 0$, and thus $Z^1_{\bP^2}$ is not ACM \wrt any very ample divisor; see Example \ref{exams3e1}.

However, the following result shows that every vector bundle admits a twist whose sufficiently high Frobenius direct images are ACM.

\begin{proposition}
	Let $H$ be a very ample divisor on $X$. Let $E$ be a vector bundle on $X$. Then there exist $e \ge 1$ and $a \in \bZ$ such that $F^e_* (E(aH))$ is ACM \wrt $H$. 
\end{proposition}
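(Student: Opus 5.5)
The plan is to reduce the ACM condition for $F^e_*(E(aH))$ to vanishing statements for twists of $E$ itself, using the projection formula. Since $F^e$ is finite and $(F^e)^*\mathcal O_X(tH)\cong\mathcal O_X(p^etH)$, for every $t\in\mathbb Z$ and every $q$ we have
\[
H^q\bigl(X,F^e_*(E(aH))\otimes\mathcal O_X(tH)\bigr)\cong H^q\bigl(X,E((a+p^et)H)\bigr).
\]
So the problem becomes the following: choose $e$ and $a$ so that $H^q(X,E(mH))=0$ for $1\le q\le n-1$ and for every $m$ in the arithmetic progression $a+p^e\mathbb Z$.

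Next I would invoke Serre vanishing twice. Applied to $E$, it gives an integer $m_0\ge0$ with $H^q(X,E(mH))=0$ for all $q\ge1$ and all $m\ge m_0$. Applied to $E^\vee\otimes\omega_X$, together with Serre duality
\[
H^q(X,E(mH))\cong H^{n-q}\bigl(X,E^\vee\otimes\omega_X(-mH)\bigr)^\vee,
\]
it gives an integer $m_1\ge0$ with $H^q(X,E(mH))=0$ for all $q\le n-1$ and all $m\le -m_1$. Thus the intermediate cohomology $H^q(X,E(mH))$, $1\le q\le n-1$, can be nonzero only for $m$ in the finite window $-m_1<m<m_0$. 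Set $t_0:=\max(m_0,m_1)$; this window is contained in $\{m:|m|\le t_0-1\}$, which has at most $2t_0-1$ elements.

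Finally, I would choose $a=t_0$ and $e$ with $p^e>2t_0$. The progression $t_0+p^e\mathbb Z$ contains $t_0$. Its other terms are either at least $t_0+p^e>t_0$ or at most $t_0-p^e<-t_0$. So no term lies in the bad window: $t_0\ge m_0$ is in the Serre-vanishing range, the terms with $t\ge1$ are too, and the terms with $t\le-1$ satisfy $m\le -m_1$. Hence $F^e_*(E(aH))$ is ACM with respect to $H$, and the same holds for every larger $e$.

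There is no real obstacle here; the only care needed is the bookkeeping that the gap $p^e$ in the progression exceeds the length of the window. This matches the shape $p^e>2t_0+1$ with twist $(t_0+1)H$ that appears in the introduction's version of the statement.
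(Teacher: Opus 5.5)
Your proof is correct and follows the paper's argument. Both reduce, via the projection formula, to making the progression $a+p^e\mathbb Z$ avoid the finite set of twists $m$ with nonzero intermediate cohomology $H^q(X,E(mH))$ (finite by Serre vanishing plus Serre duality), and then take $p^e$ larger than the width of that set. Your choice of $a=t_0$ with $p^e>2t_0$, where the bad window lies strictly inside $(-t_0,t_0)$, is just a reindexing of the paper's choice $a=t_0+1$, $p^e>2t_0+1$ with $S(E)\subseteq[-t_0,t_0]$.
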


\begin{proof}
	Recall that $\dim X=n$.	We define
	\[
	S(E)=\left\{t\in\mathbb Z:H^i(X, E(tH))\neq0\text{ for some }1\leq i\leq n-1\right\}.
	\]
	By Serre vanishing and Serre duality, it follows that $S(E)$ is finite. By the projection formula $$F_*^e(E(aH))(tH) \cong F_*^e \big(E(aH) \otimes (F^e)^*\cO_X(tH) \big) \cong F^e_* E((a+p^et)H).$$ 
	This gives 
	\[
	H^i\left(X,F_*^e(E(aH))(tH)\right)=H^i\left(X,E((a+p^et)H)\right).
	\]
	Therefore, if $a\in\mathbb Z$ and $e\geq1$ satisfy
	\[
	(a+p^e\mathbb Z)\cap S(E)=\varnothing,
	\]
	then $F_*^e\bigl(E(aH)\bigr)$ is ACM with respect to $H$. In particular, if $S(E)\subseteq[-t_0,t_0]$ for some integer $t_0 \ge 0$, then one may take $a=t_0+1$ and any $e$ satisfying $p^e>2t_0+1$.
\end{proof}

Applying the proposition to $B_X^i$ and $Z_X^i$, we obtain the following corollary.
\begin{cor} \label{frobeniusdirectacm}
	Let $H$ be a very ample divisor on $X$.	There exists an integer $t_0\geq0$ such that
	$$ F^e_*\bigl(B_X^i((t_0+1)H)\bigr) \qquad\text{and}\qquad F^e_*\bigl(Z_X^i((t_0+1)H)\bigr) $$
	are ACM with respect to $H$ for every $1\leq i\leq n$ and every integer $e\geq1$ satisfying $p^e>2t_0+1$.
\end{cor}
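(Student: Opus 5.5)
The corollary follows from the preceding proposition, applied to the finitely many bundles $B_X^1,\dots,B_X^n$ and $Z_X^1,\dots,Z_X^n$, with a single integer $t_0$ chosen to work for all of them at once.

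First, each $B_X^i$ and $Z_X^i$ is locally free. We have $B_X^1$ of rank $p^n-1$. The sequences \eqref{defseq} and \eqref{cartierseq} then give local freeness of $Z_X^i$ and $B_X^{i+1}$ inductively: kernels of surjections of locally free sheaves are locally free, and we use the extension $0\to B_X^i\to Z_X^i\to\Omega_X^i\to 0$. Alternatively, one can invoke the standard local description of $F_*\Omega_X^\bullet$.

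Second, for each such bundle $E$, consider the set $S(E)$ defined in the proof of the proposition. Serre vanishing gives $H^q(X,E(tH))=0$ for $q>0$ and $t\gg0$. Serre duality gives
\[
H^q\bigl(X,E(tH)\bigr)\cong H^{n-q}\bigl(X,E^\vee\otimes\omega_X(-tH)\bigr)^\vee,
\]
which vanishes for $1\le q\le n-1$ and $t\ll0$, again by Serre vanishing. Hence $S(E)$ is finite. Put
\[
S=\bigcup_{i=1}^n \bigl(S(B_X^i)\cup S(Z_X^i)\bigr),
\]
a finite subset of $\mathbb Z$, and choose an integer $t_0\ge0$ with $S\subseteq[-t_0,t_0]$.

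Finally, fix $e\ge1$ with $p^e>2t_0+1$, and let $E$ be any of these bundles. By the projection formula, as in the proposition,
\[
H^q\bigl(X,F^e_*(E((t_0+1)H))(tH)\bigr)\cong H^q\bigl(X,E((t_0+1+p^et)H)\bigr).
\]
If $t\ge0$, then $t_0+1+p^et\ge t_0+1$. If $t\le-1$, then $t_0+1+p^et\le t_0+1-p^e<-t_0$. In either case the exponent lies outside $[-t_0,t_0]\supseteq S(E)$, so the cohomology vanishes for $1\le q\le n-1$. This is exactly the ACM condition.

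The only point requiring any care is taking one uniform $t_0$ for all $2n$ bundles, which is immediate since the union of finitely many finite sets is finite. There is no real obstacle.
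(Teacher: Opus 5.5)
Your argument is correct and is essentially the paper's. The paper applies the proof of the proposition to the single bundle $E=\bigoplus_{i=1}^n(B_X^i\oplus Z_X^i)$, whose set $S(E)$ is exactly your union $S$, and your explicit check that $t_0+1+p^et\notin[-t_0,t_0]$ when $p^e>2t_0+1$ is the step the paper leaves inside the proposition.

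One small repair concerns local freeness. Your induction runs upward, but in \eqref{defseq} the sheaf $B_X^{i+1}$ is a cokernel, not a kernel, so the principle you cite does not give it. Run the induction downward instead: start from $Z_X^n=F_*\omega_X$ and alternately take kernels of the surjections in \eqref{cartierseq} and \eqref{defseq}.
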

\begin{proof}
	Apply the proof of the proposition to the vector bundle $ E=\bigoplus_{i=1}^{n}\bigl(B_X^i\oplus Z_X^i\bigr)$.
	The same choice of $t_0$ then works for every summand.
\end{proof}

\begin{example} \label{exams3e1}
	Consider $X=\bP^2$, and $H=\cO_{\bP^2}(1)$. Then using the Euler sequence
	$$0 \longrightarrow \Omega^1_{\bP^2} \longrightarrow \cO_{\bP^2}(-1)^{\oplus 3} \longrightarrow \cO_{\bP^2} \longrightarrow 0,$$
	it follows that $H^1(\bP^2, \Omega^1_{\bP^2}(t)) = 0$ if and only  if $t \ne 0$. Since $\bP^2$ is Frobenius split, $B^1_{\bP^2}$ is ACM \wrt $\cO_{\bP^2}(1)$. Using the \ses from \eqref{cartierseq}:
	$$0\longrightarrow B_{\bP^2}^1 \longrightarrow Z_{\bP^2}^1 \longrightarrow\Omega_{\bP^2}^1 \longrightarrow0,$$
	we get  $S(Z^1_{\bP^2})=\{0\}$, since $H^2(\bP^2, B^1_{\bP^2})=0$.  Thus, $F^e_*(Z^1_{\bP^2}(a))$ is ACM \wrt $\cO_{\bP^2}(1)$ if and only if $p^e \nmid a$.
\end{example}

	\section{The sheaves $B_X^i$ and $Z_X^i$ on ordinary varieties with trivial cotangent bundle}
	\label{sec3.3}
	
	Let $X$ be a smooth projective variety of dimension $n$. If $X$ is Frobenius split and $n=2$ or $3$, then a suitable twist of $B_X^1$ is almost Ulrich; see \cite[Theorem 5.4]{MR4206787}. In this section, we investigate the ACM and almost Ulrich properties of $B^i_X$ and $Z^i_X$ under the assumption that $X$ is ordinary and $\Omega_X^1 \cong \mathcal O_X^{\oplus n}$.

Let $\Gamma$ be the group of all $p$-torsion line bundles
\[
\Gamma:=\text{Pic}(X)[p]=\left\{ [L]\in\text{Pic}(X)\mid L^{\otimes p}\cong\mathcal O_X\right\}.
\]
Since $X$ is ordinary with $\Omega_X^1\cong\mathcal O_X^{\oplus n}$, by \cite[Lemma 1.1, p. 193]{mehta1987varieties}, $X$ is Frobenius split. Thus, \cite[Proposition 3.4]{ejiri2019characterization} gives
\[
H^0(X,\Omega_X^1) \cong \text{Pic}(X)[p]\otimes_{\mathbb Z}k.
\]
Since $H^0(X,\Omega_X^1)\cong k^n$, it follows that $\Gamma=\text{Pic}(X)[p]$ is an $n$-dimensional $\mathbb F_p$-vector space. Hence
\[
\Gamma\cong(\mathbb Z/p\mathbb Z)^n \quad\text{and}\quad \#\Gamma=p^n.
\]
We choose representatives $L_\alpha$ for $\alpha\in\Gamma$, with $L_0=\mathcal O_X$. Since $X$ is Frobenius split and $\#\Gamma=p^n=\deg F$, by \cite[Lemma 3.1]{ejiri2019characterization} we get
\begin{equation} \label{e310}
	F_*\mathcal O_X \cong \bigoplus_{\alpha\in\Gamma}L_\alpha.
\end{equation}

\begin{proposition} \label{abdecom}
	Let $X$ be a smooth projective ordinary variety of dimension $n$ with $\Omega_X^1\cong\mathcal O_X^{\oplus n}$.	 Then for $1\leq i\leq n$,
	we have		
	\[
	B_X^i \cong \bigoplus_{\alpha\neq0} L_\alpha^{\oplus\binom{n-1}{i-1}}, \quad Z_X^i \cong \mathcal O_X^{\oplus\binom ni} \oplus \bigoplus_{\alpha\neq0} L_\alpha^{\oplus\binom{n-1}{i-1}}.
	\]
	In particular, $Z_X^i\cong \Omega_X^i\oplus B_X^i$ and $B_X^i\cong
	\left(B_X^1\right)^{\oplus\binom{n-1}{i-1}}$.
\end{proposition}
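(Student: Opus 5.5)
We decompose the whole de Rham complex $F_*\Omega_X^\bullet$ into pieces indexed by $\Gamma$, using \eqref{e310} together with the triviality of $\Omega^1_X$. Fix a basis $\omega_1,\dots,\omega_n$ of $H^0(X,\Omega^1_X)$ trivializing $\Omega^1_X$. Since $X$ is ordinary, the Cartier operator acts bijectively on $H^0(X,\Omega^1_X)$. After a change of basis we may therefore assume $C(\omega_j)=\omega_j$, and in particular $d\omega_j=0$, i.e.\ each $\omega_j$ is a closed logarithmic form. Then every $\Omega^i_X\cong \bigwedge^i V\otimes_k\mathcal O_X$, where $V=\langle\omega_1,\dots,\omega_n\rangle$. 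By the projection formula,
\[
F_*\Omega^i_X\cong F_*\mathcal O_X\otimes_k \textstyle\bigwedge^i V\cong\bigoplus_{\alpha\in\Gamma}L_\alpha\otimes_k\bigwedge^iV.
\]
Because the $\omega_j$ are closed, $F_*d(f\,\omega_I)=df\wedge\omega_I$. So $F_*d$ is determined by $F_*d:F_*\mathcal O_X\to F_*\Omega^1_X\cong F_*\mathcal O_X\otimes V$, wedged with the identity on $\bigwedge^\bullet V$.

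The key step is to show that $d$ respects the decomposition \eqref{e310}: it should map $L_\alpha$ into $L_\alpha\otimes V$, acting there as $s\mapsto s\otimes v_\alpha$ for a vector $v_\alpha\in V$ depending only on $\alpha$, with $v_\alpha\neq0$ for $\alpha\neq0$ and $v_0=0$. We would get this from the construction in \cite[Lemma 3.1]{ejiri2019characterization}. Each $L_\alpha$ with $L_\alpha^p\cong\mathcal O_X$ embeds in $F_*\mathcal O_X$ via $L_\alpha\to F_*F^*L_\alpha\cong F_*\mathcal O_X$. Locally, with a frame $e$ and $e^p=g$ a unit, the image is spanned by a $p$-th root of $g^{-1}$-type function $h$. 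Its derivative $dh=h\cdot(-dg/g)$ up to sign is $h$ times a closed logarithmic form. That form is the global form $\operatorname{dlog}(\alpha)\in H^0(\Omega^1_X)^{C=1}$ attached to $\alpha$ under $\Gamma\otimes k\cong H^0(\Omega^1_X)$ \cite[Proposition 3.4]{ejiri2019characterization}. Identifying $\operatorname{dlog}$ with $v_\alpha$, injectivity of $\Gamma\to H^0(\Omega^1_X)$ gives $v_\alpha\neq0$ for $\alpha\ne0$. This is the main obstacle: carefully matching the summands of \eqref{e310} with the $p$-torsion line bundles and checking that the resulting splitting is compatible with $d$, rather than merely abstract. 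I would first verify it locally on a trivializing cover, where $F_*\mathcal O_X$ is the algebra generated by the $h_\alpha$.

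Granting this, $F_*\Omega^\bullet_X\cong\bigoplus_\alpha C^\bullet_\alpha$ with $C^i_\alpha=L_\alpha\otimes\bigwedge^iV$ and differential $v_\alpha\wedge-$. For $\alpha=0$ the differential is zero. For $\alpha\neq0$ it is the Koszul complex of a nonzero vector, which is exact, and its image in degree $i$ is $v_\alpha\wedge\bigwedge^{i-1}V$, of dimension $\binom{n-1}{i-1}$. Hence, taking images and kernels,
\[
B^i_X\cong\bigoplus_{\alpha\ne0}L_\alpha^{\oplus\binom{n-1}{i-1}},\qquad Z^i_X\cong\Big(\mathcal O_X\otimes\textstyle\bigwedge^iV\Big)\oplus\bigoplus_{\alpha\neq0}L_\alpha^{\oplus\binom{n-1}{i-1}},
\]
since by exactness kernel equals image for $\alpha\ne0$. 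The first summand of $Z^i_X$ is $\mathcal O_X^{\oplus\binom ni}\cong\Omega^i_X$. The case $i=1$ gives $B^1_X\cong\bigoplus_{\alpha\ne0}L_\alpha$, which yields the final claims $Z^i_X\cong\Omega^i_X\oplus B^i_X$ and $B^i_X\cong(B^1_X)^{\oplus\binom{n-1}{i-1}}$.
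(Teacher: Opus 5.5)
Your argument is correct, and its second half takes a different route from the paper's.

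\textbf{What the paper does.} It never chooses closed forms. It gets the splitting $F_*\Omega^\bullet_X\cong\bigoplus_\alpha C^\bullet_\alpha$ formally from $\Hom(L_\alpha,L_\beta)=0$ for $\alpha\neq\beta$ and $\operatorname{End}(L_\alpha)=k$. So each differential is $1_{L_\alpha}\otimes\delta_{\alpha,i}$ with $\delta_{\alpha,i}$ left abstract. It then uses the Cartier isomorphism $\mathcal H^i(F_*\Omega^\bullet_X)\cong\mathcal O_X^{\oplus\binom ni}$ to force zero differentials on $C_0^\bullet$ and exactness of $C_\alpha^\bullet$ for $\alpha\neq0$. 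The multiplicities come from the recursion $r_i+r_{i+1}=\binom ni$.

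\textbf{What you do instead.} You fix a basis of closed $1$-forms, which turns each $C^\bullet_\alpha$ into the Koszul complex of $v_\alpha\wedge-$. Exactness and the image dimension $\binom{n-1}{i-1}$ are then read off directly, and the Cartier isomorphism is not needed for that step; you also get an explicit description of the $L_\alpha$-part of $B^i_X$ as $L_\alpha\otimes(v_\alpha\wedge\bigwedge^{i-1}V)$. The price is that you must know all global $1$-forms are closed, and you should make this explicit. Ordinarity gives $H^0(X,B^2_X)=0$, hence $H^0(X,Z^1_X)=H^0(X,\Omega^1_X)$ by \eqref{defseq}. Closedness is all the Koszul structure uses, so choosing a Cartier-fixed basis is more than you need.

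\textbf{Your ``main obstacle''.} The step you flag is much easier than your local $\operatorname{dlog}$ matching suggests. Since $F_*d$ is $\mathcal O_X$-linear, its component $L_\alpha\to L_\beta\otimes_k V$ lies in $\Hom(L_\alpha,L_\beta)\otimes_k V$. This is $0$ for $\beta\neq\alpha$ and $V$ for $\beta=\alpha$, which is exactly the paper's argument. Next, $\ker\bigl(F_*d:F_*\mathcal O_X\to F_*\Omega^1_X\bigr)=\mathcal O_X$ is precisely the summand $L_0$, because every map $\mathcal O_X\to L_\alpha$ with $\alpha\neq0$ vanishes. So $v_0=0$, and $v_\alpha\neq0$ for $\alpha\neq0$, since otherwise $L_\alpha\subseteq L_0$.

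Thus the identification with $\operatorname{dlog}$ and the injectivity of $\Gamma\to H^0(X,\Omega^1_X)$ are unnecessary. Your local computation is nevertheless valid once the adjunction embeddings are matched with the summands of \eqref{e310}, which again follows from $\Hom(L_\alpha,F_*\mathcal O_X)=k$. It yields the extra information that $v_\alpha$ is, up to sign, the logarithmic form attached to $\alpha$.
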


\begin{proof}
	We choose a $k$-vector space $V$ of dimension $n$ and an isomorphism $\Omega_X^1\cong V\otimes_k\mathcal O_X$.
	Then
	\[
	\Omega_X^i \cong \bigwedge^iV\otimes_k\mathcal O_X \cong \mathcal O_X^{\oplus\binom ni}.
	\]
	Consequently, by \eqref{e310}, we get 
	\begin{equation}
		F_*\Omega_X^i \cong \big(\bigoplus_{\alpha\in\Gamma} L_\alpha \big) \otimes_k\bigwedge^iV.
	\end{equation}
	If $\alpha\neq\beta$, then $L_\alpha^{-1}\otimes L_\beta \not \cong \cO_X$, and 
	\begin{equation} \label{e311}
		\text{Hom}(L_\alpha,L_\beta)=H^0\left(X,L_\alpha^{-1}\otimes L_\beta\right)=0.
	\end{equation}
	Indeed, a nonzero section of $L_\alpha^{-1}\otimes L_\beta$ defines a nonzero effective divisor $D$. Since $L_\alpha^{-1}\otimes L_\beta$ is torsion, $mD=(f)$ is principal for some $m>0$ and some rational function $f$. Then $f$ has no poles and is therefore constant, forcing $D=0$, giving a contradiction.
	
	We consider the de Rham complex $F_*\Omega_X^\bullet$, whose differentials $d:F_*\Omega_X^i\longrightarrow F_*\Omega_X^{i+1}$ are $\mathcal O_X$-linear. It follows from \eqref{e311} that $d$ preserves the $L_\alpha$-isotypic summands. Thus the complex decomposes as
	\begin{equation} \label{e312}
		F_*\Omega_X^\bullet \cong \bigoplus_{\alpha\in\Gamma}C_\alpha^\bullet,
	\end{equation}
	where $C_\alpha^i= L_\alpha\otimes_k\bigwedge^iV$. Since $\text{End}(L_\alpha)=k$, the restriction of the differential $d$ to $C_\alpha^i$ has the form $d_{\alpha, i}=1_{L_\alpha}\otimes\delta_{\alpha,i}$, for some $k$-linear map
	\[
	\delta_{\alpha,i}: \bigwedge^iV\longrightarrow\bigwedge^{i+1}V.
	\]
	The Cartier isomorphism in \eqref{carisom} gives
	\begin{equation} \label{e313}
		Z^i_X/B^i_X=	\mathcal H^i(F_*\Omega_X^\bullet) \cong \Omega_X^i \cong \mathcal O_X^{\oplus\binom ni}.
	\end{equation}
	On the other hand, from the decomposition \eqref{e312}, it follows that the cohomology is of the form
	\[
	\mathcal H^i(F_*\Omega_X^\bullet) \cong \bigoplus_{\alpha\in\Gamma} \mathcal H^i(C_\alpha^\bullet) \cong 
	\bigoplus_{\alpha\in\Gamma} L_\alpha^{\oplus h_{\alpha,i}},
	\]
	for suitable integers $h_{\alpha,i}\geq0$. Comparing this with \eqref{e313}, no nontrivial $L_\alpha$ can occur. Hence
	\begin{equation} \label{e316}
		\mathcal H^i(C_\alpha^\bullet)=0 \,\,\text{for}\,\,\alpha\neq0, \qquad \mathcal H^i(C_0^\bullet) \cong 	\mathcal O_X^{\oplus\binom ni}.
	\end{equation}
	Since $C_0^i= \cO_X \otimes_k\bigwedge^iV$, $\text{rank}\, (C_0^i)\,=\, \binom ni$. Thus, \eqref{e316} forces every differential in $C_0^\bullet$ to be zero. Hence, the entire trivial summand $\mathcal O_X^{\oplus\binom ni}$ is contained in $Z_X^i$. Moreover, since $\mathcal H^i(C_\alpha^\bullet)=0$ for every $\alpha\neq0$ by \eqref{e316}, the complex $C_\alpha^\bullet$ is exact. Therefore 
	$$B^i_X	\cap \bigoplus_{ 0 \ne \alpha\in\Gamma}C_\alpha^i =Z^i_X	\cap \bigoplus_{ 0 \ne \alpha\in\Gamma}C_\alpha^i.$$
	Thus, $Z^i_X =B^i_X \oplus C_0^i \cong B^i_X \oplus \mathcal O_X^{\oplus\binom ni}$. 
	
	Fix $\alpha\neq0$. Let $r_i$ be the rank of the image in degree $i$:
	\[
	r_i=\text{rk}\, \text{im}\bigg(\delta_{\alpha,i-1}:\bigwedge^{i-1}V\longrightarrow\bigwedge^iV\bigg).
	\]
	Set $r_0=0$. Exactness of $C_\alpha^\bullet$ gives $\ker \delta_{\alpha,i}\,=\,\im \delta_{\alpha,i-1}$. Thus, the rank-nullity theorem gives $r_i+r_{i+1}=\binom ni$. Substituting $i=0$, we get $r_1=1$. For $i=1$, we get $r_2=(n-1)$. By induction, it follows that $r_i=\binom{n-1}{i-1}$,
	for $1 \leq i \leq n$. For each $\alpha \ne 0$, the $L_\alpha$-part of $B_X^i$ is the image in degree $i$, hence has multiplicity $r_i$. This proves 
	\[
	B_X^i \cong \bigoplus_{\alpha\neq0} L_\alpha^{\oplus\binom{n-1}{i-1}}.
	\]
	Hence
	\[
	Z_X^i \cong \mathcal O_X^{\oplus\binom ni} \oplus \bigoplus_{\alpha\neq0} L_\alpha^{\oplus\binom{n-1}{i-1}}.
	\]
\end{proof}

\begin{lemma} \label{toracm}
	Let $X$ be an ordinary smooth projective variety of dimension $n$ with trivial cotangent bundle. Then the line bundles $L_{\alpha}$ for $\alpha \ne 0$ appearing in \eqref{e310} are ACM \wrt every very ample divisor $H$.
\end{lemma}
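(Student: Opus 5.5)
We need to show $H^i(X, L_\alpha(tH))=0$ for $1\le i\le n-1$, all $t\in\mathbb Z$, $\alpha\ne0$. The plan is to handle $t=0$ from ordinarity and the decomposition of $B^1_X$, and $t\neq 0$ by Frobenius pushforward and the $p$-divisibility trick.

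\textbf{Step 1 ($t=0$).} By Proposition \ref{abdecom} with $i=1$, $B_X^1\cong\bigoplus_{\alpha\neq0}L_\alpha$. Since $X$ is ordinary, $H^q(X,B_X^1)=0$ for all $q$. Hence $H^q(X,L_\alpha)=0$ for every $q$ and every $\alpha\neq0$.

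\textbf{Step 2 (all $t$, via Frobenius).} Twist \eqref{e310} by $\mathcal O_X(tH)$ and use the projection formula:
\[
\bigoplus_{\beta\in\Gamma}L_\beta(tH)\cong F_*\mathcal O_X\otimes\mathcal O_X(tH)\cong F_*\mathcal O_X(ptH).
\]
Similarly, twisting by $L_\alpha(tH)$ gives
\[
\bigoplus_\beta L_\beta\otimes L_\alpha(tH)\cong F_*\bigl(F^*L_\alpha\otimes\mathcal O_X(ptH)\bigr)=F_*\mathcal O_X(ptH),
\]
because $F^*L_\alpha=L_\alpha^{\otimes p}\cong\mathcal O_X$. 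Hence
\[
H^i(X,L_\alpha(tH))\hookrightarrow H^i(X,\mathcal O_X(ptH)).
\]
Iterating $e$ times, using $F^e_*\mathcal O_X\cong\bigoplus_{\beta}L_\beta$ inductively (or just iterating the inclusion above), gives
\[
h^i(X,\mathcal O_X(tH))\le h^i(X,\mathcal O_X(p^etH)).
\]
For $t>0$, Serre vanishing gives $H^i(X,\mathcal O_X(p^etH))=0$ for $e\gg0$ and $i\ge1$. So $H^i(X,\mathcal O_X(tH))=0$ for $i\ge1$, and then $H^i(X,L_\alpha(tH))\subset H^i(X,\mathcal O_X(ptH))=0$.

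For $t<0$, use Serre duality with $\omega_X\cong\mathcal O_X$ (trivial cotangent bundle):
\[
H^i(X,L_\alpha(tH))\cong H^{n-i}\bigl(X,L_\alpha^{-1}(-tH)\bigr)^\vee.
\]
Here $L_\alpha^{-1}\cong L_{-\alpha}$ with $-\alpha\neq0$ and $-t>0$, so the case $t>0$ applies provided $1\le n-i\le n-1$, which holds for $1\le i\le n-1$.

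\textbf{Main obstacle.} The delicate point is getting the correct inclusion $L_\alpha(tH)\subset F_*\mathcal O_X(ptH)$ from the $\Gamma$-torsor structure, which needs $L_\alpha^{\otimes p}$ trivial. That holds by definition of $\Gamma$.

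If the $t>0$ argument proves awkward, there is a fallback. Realise $L_\alpha=\mathcal O(D)$ with $D$ numerically trivial. Then $tH+D$ is ample for $t>0$, and Kodaira-type vanishing follows since $X$ is Frobenius split (Mehta–Ramanathan: ample line bundles on Frobenius split varieties have vanishing higher cohomology). This gives $H^i(X,L_\alpha(tH))=0$ for $i\ge1$, $t>0$ directly. I would in fact use this citation as the primary route for $t>0$, with Step 1 covering $t=0$ and Serre duality covering $t<0$.
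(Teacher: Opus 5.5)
Your proof is correct, and the route you commit to is essentially the paper's: ordinarity with $B^1_X\cong\bigoplus_{\alpha\neq0}L_\alpha$ for $t=0$, vanishing of higher cohomology of ample line bundles on the Frobenius split $X$ for $t>0$, and Serre duality with $\omega_X\cong\mathcal O_X$ for $t<0$. Your Frobenius-iteration alternative for $t>0$ is just the standard proof of that vanishing theorem, with one slip: $F^e_*\mathcal O_X\cong\bigoplus_\beta L_\beta$ is false for $e\ge2$ (the ranks $p^{ne}$ and $p^n$ differ), but iterating the inclusion $H^i(X,\mathcal O_X(tH))\hookrightarrow H^i(X,\mathcal O_X(ptH))$, your other suggestion, is valid.
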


\begin{proof} 
	By Proposition \ref{abdecom} with $i=1$,
	$$ B_X^1\cong\bigoplus_{\alpha\neq0}L_\alpha. $$
	Since $X$ is ordinary, $H^j(X,B_X^1)=0$ for every $j\geq0$; see Definition \ref{ordinarydef}. Hence
	\begin{equation} \label{e322}
		H^j(X,L_\alpha)=0 
	\end{equation}
	for every $\alpha\neq0$ and every $j\geq0$. 
	Let $H$ be a very ample divisor. For $t>0$, the line bundle $L_\alpha(tH)$ is ample, since
	$ \bigl(L_\alpha(tH)\bigr)^{\otimes p} \cong\mathcal O_X(ptH)$.
	Since $X$ is Frobenius split, \cite[Corollary 1.12]{schwede2026frobenius} gives
	\begin{equation} \label{e5025}
		H^j(X,L_\alpha(tH))=0 
	\end{equation}
	for every $j>0$. For $t<0$, Serre duality and $\omega_X\cong\mathcal O_X$ give
	$$ H^j(X,L_\alpha(tH)) \cong H^{n-j}\bigl(X,L_\alpha^{-1}(-tH)\bigr)^\vee =0 $$
	for $1\leq j\leq n-1$, because $L_\alpha^{-1}(-tH)$ is ample. This proves that $L_\alpha$ is ACM with respect to $H$.
\end{proof}

\begin{lemma} \label{nonvanabelian}
	Let $X$ be an ordinary smooth projective variety of dimension $n$ with trivial cotangent bundle.  Then $H^q(X, \cO_X) \ne 0$ for $0 \le q \le n$.
\end{lemma}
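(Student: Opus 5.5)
We aim to show $H^q(X,\mathcal O_X)\neq 0$ for $0\le q\le n$. The idea is to show that $H^1(X,\mathcal O_X)$ has dimension at least $n$ and that cup product sends $\bigwedge^q H^1$ injectively into $H^q$. Since $\omega_X\cong\mathcal O_X$ (because $\Omega^n_X\cong\bigwedge^n(\mathcal O_X^{\oplus n})$), Serre duality identifies $H^n(X,\mathcal O_X)\cong H^0(X,\mathcal O_X)^\vee=k$. This settles $q=0$ and $q=n$ directly, so the real content is $1\le q\le n-1$.

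\textbf{Step 1 (degree one).} We use the Frobenius action on $H^1(X,\mathcal O_X)$. By \eqref{e310}, $F_*\mathcal O_X\cong\bigoplus_{\alpha\in\Gamma}L_\alpha$, and by Lemma \ref{toracm} (more precisely \eqref{e322}) $H^j(X,L_\alpha)=0$ for $\alpha\ne0$. Hence $F^*:H^q(X,\mathcal O_X)\to H^q(X,F_*\mathcal O_X)=H^q(X,\mathcal O_X)$ is a $p$-linear bijection for every $q$; this also follows directly from ordinarity via \eqref{Fsplitseq}, since $H^\bullet(X,B^1_X)=0$.

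Next we use the Kummer sequence $0\to\mu_p\to\mathbb G_m\to\mathbb G_m\to0$ in the flat topology, or equivalently Artin--Schreier together with the $p$-torsion of $\operatorname{Pic}$. Since $\Gamma=\operatorname{Pic}(X)[p]\cong(\mathbb Z/p)^n$, each nontrivial $L_\alpha$ gives a $\mu_p$-torsor, hence a class in $H^1_{fl}(X,\mu_p)$. By the standard identification $H^1_{fl}(X,\mu_p)\otimes k\cong H^0(X,Z^1_X)^{C=1}\otimes k$ (the logarithmic forms $d\log$), these give $n$ independent closed forms. Dually, we would rather run Artin--Schreier: on ordinary $X$, $H^1_{et}(X,\mathbb Z/p)\otimes k\cong H^1(X,\mathcal O_X)$ because $F$ is bijective. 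The cleaner route is to cite the proof of \cite[Proposition 3.4]{ejiri2019characterization}, which gives $H^0(\Omega^1)\cong \operatorname{Pic}[p]\otimes k$, and its dual statement. Alternatively, Hodge symmetry for ordinary varieties (Illusie, the degeneration of Hodge-to-de Rham and $h^{0,1}=h^{1,0}$ under ordinarity) gives $h^1(\mathcal O_X)=h^0(\Omega^1_X)=n$.

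\textbf{Step 2 (higher degrees).} The plan is to prove injectivity of cup product $\bigwedge^qH^1(X,\mathcal O_X)\to H^q(X,\mathcal O_X)$. We take the Serre duality pairing $H^q(\mathcal O_X)\times H^{n-q}(\mathcal O_X)\to H^n(\mathcal O_X)=k$. If we choose a Frobenius-fixed basis $\xi_1,\dots,\xi_n$ of $H^1$ (possible since $F^*$ is a bijective $p$-linear map on a finite-dimensional space over an algebraically closed field, by Lang/Dieudonné), then it suffices to show $\xi_1\cup\cdots\cup\xi_n\ne0$ in $H^n$. Every partial product is then nonzero, since it divides the top product.

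\textbf{Main obstacle.} The hard point is the nonvanishing of the top product $\xi_1\cup\cdots\cup\xi_n$. Here we would pass to the étale $\mathbb Z/p$-cover $Y\to X$ of degree $p^n$ associated with $\Gamma$ (Frobenius-fixed classes in $H^1(\mathcal O_X)$ correspond to $\mathbb Z/p$-torsors). Alternatively, we use Hodge–Witt/Dieudonné theory: for ordinary $X$ with trivial tangent bundle, $X$ is a quotient of an ordinary abelian variety by \cite{mehta1987varieties}, so $H^\bullet(X,\mathcal O_X)$ is an exterior algebra on $H^1$. Using that structure result to reduce to the abelian case, where $H^q(\mathcal O_A)=\bigwedge^qH^1(\mathcal O_A)$ has dimension $\binom nq$, is the step I expect to need the most care.
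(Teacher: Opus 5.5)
\textbf{There is a genuine gap: neither ingredient your cup-product reduction needs is established.}

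Your cases $q=0$ and $q=n$ are fine ($\omega_X\cong\bigwedge^n\mathcal O_X^{\oplus n}\cong\mathcal O_X$ plus Serre duality). The reduction in Step 2 is also logically sound: if some $\xi_1,\dots,\xi_n\in H^1(X,\mathcal O_X)$ satisfy $\xi_1\cup\cdots\cup\xi_n\neq0$, then every partial product is nonzero. The problem lies in the two inputs.

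First, Step 1 never proves $h^1(X,\mathcal O_X)\geq n$:
\begin{itemize}
\item Bijectivity of Frobenius on $H^q(X,\mathcal O_X)$ is compatible with $H^q(X,\mathcal O_X)=0$, so it gives no lower bound.
\item The Kummer/$d\log$ route and Ejiri's isomorphism $H^0(X,\Omega^1_X)\cong\operatorname{Pic}(X)[p]\otimes k$ produce $1$-forms, which you already have since $\Omega^1_X$ is trivial. They do not produce classes in $H^1(X,\mathcal O_X)$.
\item Artin--Schreier does give $H^1(X,\mathcal O_X)\cong H^1_{\mathrm{et}}(X,\mathbb Z/p)\otimes k$ once Frobenius is bijective. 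But you would then have to count $\mathbb Z/p$-torsors, whereas $\Gamma=\operatorname{Pic}(X)[p]$ counts $\mu_p$-torsors. For the same reason, the cover ``associated with $\Gamma$'' in your last paragraph is a $\mu_p^{\,n}$-torsor, which is purely inseparable, not an \'etale $\mathbb Z/p$-cover.
\item Hodge symmetry $h^{0,1}=h^{1,0}$ is not automatic in characteristic $p$, and you only gesture at it.
\end{itemize}

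Second, Step 2 is explicitly left open. Your proposed fix, that $H^\bullet(X,\mathcal O_X)$ is an exterior algebra on $H^1$ because $X$ is a quotient of an abelian variety, is asserted rather than derived. For a free action of a $p$-group in characteristic $p$ there is no transfer, and pullback can even kill cohomology. For an \'etale isogeny $E'\to E$ of degree $p$ between ordinary elliptic curves, the map $H^1(E,\mathcal O_E)\to H^1(E',\mathcal O_{E'})$ is zero, since the dual isogeny has kernel $\mu_p$. So the cup-product route is unproven, and it also aims at the much stronger statement $h^q(X,\mathcal O_X)=\binom nq$.

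\textbf{The missing idea} is to use the Mehta--Srinivas cover only for a dimension count, which is what the paper does.
\begin{itemize}
\item The cover $\pi:A\to X$ is Galois of degree $p^m$, with group $G$ a finite $p$-group.
\item In characteristic $p$ the trivial module is the only simple $k[G]$-module, so $k[G]$ has a composition series with all factors $k$.
\item Since $\pi_*\mathcal O_A\cong A\times^G k[G]$, this gives a filtration of $\pi_*\mathcal O_A$ by subbundles whose graded pieces are all $\mathcal O_X$.
\item Hence $h^q(A,\mathcal O_A)=h^q(X,\pi_*\mathcal O_A)\leq p^m\,h^q(X,\mathcal O_X)$.
\end{itemize}
As $h^q(A,\mathcal O_A)=\binom nq>0$, this forces $H^q(X,\mathcal O_X)\neq0$ for every $0\le q\le n$. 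You need neither a lower bound on $h^1$ nor any multiplicative structure.
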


\begin{proof}
	By \cite[Theorem 1]{mehta1987varieties}, there is a finite Galois \'{e}tale cover $\pi:A\longrightarrow X$
	of degree $p^m$ for some $m \ge 0$, where $A$ is an abelian variety and $\text{char} \,k=p$. The Galois group $G$ of $\pi$ is a finite $p$-group. Thus, the trivial module $k$ is the unique simple $k[G]$-module; see \cite[Proposition 1.34, p. 13]{benson2011representations}. Hence the regular representation of $G$, namely $k[G]$, admits a Jordan-H\"{o}lder filtration
	\[
	0=V_0\subset V_1\subset\cdots\subset V_{p^m}=k[G]
	\]
	with $V_\nu/V_{\nu-1}\cong k$ as $k[G]$-modules. Since $\pi:A \longrightarrow X$ is a $G$-torsor, for a $G$-representation $V$, the associated bundle on $X$ is $A\times^G V$. For the $G$-module $k[G]$, finite \'{e}tale descent gives  
	\[
	A\times^G k[G]\cong \pi_*\mathcal O_A.
	\]
	The subspaces $V_i\subset k[G]$ are $G$-stable. Therefore they define subbundles $E_i=A\times^G V_i\subset A\times^G k[G]$ on $X$.
	Hence we obtain a filtration of subbundles
	\begin{equation} \label{etalefiltration}
		0=E_0\subset E_1\subset\cdots\subset E_{p^m}=\pi_*\mathcal O_A,
	\end{equation}
	such that $E_i/E_{i-1} \simeq A\times^G(V_i/V_{i-1})$. Since $V_i/V_{i-1}\cong k$ with the trivial $G$-action, we get $A\times^G k=\mathcal O_X$. Hence $E_i/E_{i-1} \cong \cO_X$.
	
	If $H^q(X,\mathcal O_X)=0$ for some $q$ with $0 \le q \le n$, by using \eqref{etalefiltration}, we obtain $H^q(X,\pi_*\mathcal O_A)=0$. This gives $H^q(A,\mathcal O_A)=H^q(X,\pi_*\mathcal O_A)=0$, which contradicts $h^q(A,\mathcal O_A)=\binom nq>0$; see \cite[Chapter III, \S 13, Corollary 2, p. 129]{mumford1974abelian}.
\end{proof}

\begin{cor} \label{acmabelianresult}
	Let $X$ be an ordinary smooth projective variety of dimension $n$ with trivial cotangent bundle. Let $H$ be a very ample divisor.  Then, for every $1 \le i \le n$, the following hold:
	\begin{enumerate}
		\item [(1)]   $B_X^i$ is ACM \wrt $H$.
		
		\item [(2)] If $n\ge2$, every $Z_X^i$ is exactly $(n-1)$-away ACM.
		Its only intermediate cohomology occurs at twist zero, where the trivial summands contribute in every degree $1,\ldots,n-1$.
		
		\item [(3)] Let $n \ge 2$. For every integer $e\ge1$ and every $a \in \bZ$, $F_*^e(Z_X^i(aH))$ is ACM if and only if  $p^e\nmid a$.
	\end{enumerate}
\end{cor}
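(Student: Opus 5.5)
The plan is to reduce everything to cohomology of line bundles via Proposition \ref{abdecom}. It gives
\[
B_X^i\cong\bigoplus_{\alpha\neq0}L_\alpha^{\oplus\binom{n-1}{i-1}},\qquad Z_X^i\cong\mathcal O_X^{\oplus\binom ni}\oplus B_X^i .
\]
Since cohomology commutes with finite direct sums, each statement about $B_X^i$ or $Z_X^i$ becomes a statement about the summands $L_\alpha(tH)$ for $\alpha\neq0$ and $\mathcal O_X(tH)$.

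For (1), Lemma \ref{toracm} says every $L_\alpha$ with $\alpha\neq0$ is ACM with respect to $H$. Hence any direct sum of copies of them is ACM, and in particular $B_X^i$ is ACM.

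For (2), the $B_X^i$ summand contributes no intermediate cohomology by (1). It remains to analyse $H^q(X,\mathcal O_X(tH))$ for $1\le q\le n-1$.
\begin{itemize}
\item For $t>0$: $X$ is Frobenius split (as recalled at the start of this section) and $\mathcal O_X(tH)$ is ample, so \cite[Corollary 1.12]{schwede2026frobenius} gives $H^q(X,\mathcal O_X(tH))=0$ for $q>0$.
\item For $t<0$: Serre duality with $\omega_X\cong\mathcal O_X$ gives $H^q(X,\mathcal O_X(tH))\cong H^{n-q}(X,\mathcal O_X(-tH))^\vee$. This vanishes for $1\le q\le n-1$ by the same ampleness vanishing.
\item For $t=0$: Lemma \ref{nonvanabelian} gives $H^q(X,\mathcal O_X)\neq0$ for every $0\le q\le n$. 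Since $\binom ni\ge1$, the trivial summand of $Z_X^i$ has nonzero $H^q$ for each $q=1,\dots,n-1$.
\end{itemize}
So the nonvanishing pairs $(q,t)$ with $0<q<n$ are exactly $(q,0)$ for $q=1,\dots,n-1$. That is $n-1$ pairs, which is nonempty because $n\ge2$. Hence $Z_X^i$ is exactly $(n-1)$-away ACM in the sense of Definition \ref{lawayacm}.

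For (3), I use the projection formula as in the proof of the preceding Proposition:
\[
H^q\bigl(X,F^e_*(Z_X^i(aH))(tH)\bigr)\cong H^q\bigl(X,Z_X^i((a+p^et)H)\bigr).
\]
By (2), the set $S(Z_X^i)$ of twists carrying intermediate cohomology is exactly $\{0\}$. Therefore $F^e_*(Z_X^i(aH))$ is ACM if and only if $a+p^et\neq0$ for all $t\in\mathbb Z$, which is equivalent to $p^e\nmid a$.

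The only step with real content is the $t=0$ nonvanishing. It needs both Lemma \ref{nonvanabelian} and the fact that the trivial summand has positive multiplicity $\binom ni$. Everything else is bookkeeping with direct sums and the vanishing theorem for ample line bundles on Frobenius split varieties, so I expect no obstacle beyond checking that Serre duality handles negative twists in the middle degrees.
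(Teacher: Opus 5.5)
Your proposal is correct and takes essentially the same route as the paper. You reduce via Proposition \ref{abdecom} to the line-bundle summands, invoke Lemma \ref{toracm}, use Frobenius-split vanishing together with Serre duality for $t\neq 0$ and Lemma \ref{nonvanabelian} for $t=0$, and finish (3) with the projection formula $H^q\bigl(X,F^e_*(Z_X^i(aH))(tH)\bigr)\cong H^q\bigl(X,Z_X^i((a+p^et)H)\bigr)$.
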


\begin{proof}
	For $t\in\mathbb Z$ and $0<j<n$, Proposition \ref{abdecom} and Lemma \ref{toracm} give
	\[
	H^j(X,B_X^i(tH))=0, \qquad H^j(X,Z_X^i(tH)) \cong H^j(X,\mathcal O_X(tH))^{\oplus\binom ni}.
	\]
	The latter group vanishes for $t\ne0$ by \cite[Corollary 1.12]{schwede2026frobenius} and Serre duality, and is nonzero for $t=0$ by Lemma \ref{nonvanabelian}. This proves (1) and (2).

	The projection formula gives
	\[
	H^j\!\left(X,F_*^e(Z_X^i(aH))(tH)\right) \cong H^j\!\left(X,Z_X^i((a+p^et)H)\right).
	\]
	Hence, for $n\ge2$, $F_*^e(Z_X^i(aH))$ is ACM precisely when $a+p^et\ne0$ for every $t\in\mathbb Z$, equivalently $p^e\nmid a$.
\end{proof}

\begin{lemma} \label{cohotorline}
	Let $X$ be an ordinary smooth projective variety of dimension $n$ with trivial cotangent bundle. Let $H$ be a very ample divisor. For $\alpha \ne 0$, let $L_{\alpha}$ be the line bundle appearing in \eqref{e310}. Then 
	\begin{enumerate}
		\item [(1)] $H^j(X,L_\alpha(tH))=0 \,\,\text{for all} \,\,j>0$ and $t>0$.
		\item [(2)] $H^0(X,L_\alpha(tH))=0$ if and only if $t \le 0$.
		\item [(3)] $H^n(X,L_\alpha(tH))=0$ if and only if $t\geq0$.
	\end{enumerate}
\end{lemma}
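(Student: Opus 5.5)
Part (1) is essentially the argument already given in the proof of Lemma \ref{toracm}. For $t>0$ we have $\bigl(L_\alpha(tH)\bigr)^{\otimes p}\cong\mathcal O_X(ptH)$, so $L_\alpha(tH)$ is ample. Recall from the start of this section that $X$ is Frobenius split, by \cite[Lemma 1.1]{mehta1987varieties}. Hence \cite[Corollary 1.12]{schwede2026frobenius} gives $H^j(X,L_\alpha(tH))=0$ for all $j>0$. I will simply record this.

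For (2) there are three ranges of $t$.
\begin{itemize}
\item For $t=0$: by \eqref{e322}, $H^0(X,L_\alpha)=0$.
\item For $t<0$: the line bundle $L_\alpha(tH)$ has negative degree against $H^{n-1}$, since $L_\alpha$ is numerically trivial, being torsion. A nonzero section would give an effective divisor $D$ with $D\cdot H^{n-1}<0$, which is impossible. So $H^0=0$.
\item For $t>0$: by (1) all higher cohomology of $L_\alpha(tH)$ vanishes, so $h^0(L_\alpha(tH))=\chi(L_\alpha(tH))$. Because $L_\alpha$ is numerically trivial, Riemann--Roch gives $\chi(L_\alpha(tH))=\chi(\mathcal O_X(tH))$. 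By Frobenius splitting vanishing again, $\chi(\mathcal O_X(tH))=h^0(\mathcal O_X(tH))$.
\end{itemize}
Finally $h^0(\mathcal O_X(tH))>0$, since $tH$ is effective; in fact it is at least $2$ because $H$ is very ample and $t\ge1$. This proves $H^0(X,L_\alpha(tH))\neq 0$ for $t>0$, completing (2).

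For (3), use Serre duality with $\omega_X\cong\mathcal O_X$, which holds since $\Omega^1_X$ is trivial:
\[
H^n(X,L_\alpha(tH))\cong H^0\bigl(X,L_\alpha^{-1}(-tH)\bigr)^\vee .
\]
Now $L_\alpha^{-1}\cong L_{-\alpha}$ (up to the choice of representative) with $-\alpha\neq0$, so $L_\alpha^{-1}$ is again a nontrivial $p$-torsion line bundle of the same type. Applying (2) to it, $H^0\bigl(X,L_\alpha^{-1}(-tH)\bigr)=0$ if and only if $-t\le 0$, that is, $t\ge0$.

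The only step needing care is the use of $\chi$ in (2): the numerical invariance of $\chi$ under twisting by a numerically trivial line bundle. This follows from the Hirzebruch--Riemann--Roch formula, or Snapper's polynomial, which depends only on numerical classes in any characteristic. An alternative is to note that $L_\alpha(tH)^{\otimes p}$ is very ample, but the Euler characteristic argument is cleaner, and I would go with it.
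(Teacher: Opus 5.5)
Your proposal is correct and follows the paper's proof essentially step by step: (1) uses Frobenius-split vanishing for the ample bundle $L_\alpha(tH)$; (2) uses numerical invariance of $\chi$ together with that vanishing for $t>0$, \eqref{e322} for $t=0$, and negativity for $t<0$; and (3) uses Serre duality with $\omega_X\cong\mathcal O_X$ applied to $L_\alpha^{-1}\cong L_{-\alpha}$. The only cosmetic difference is that you rule out sections for $t<0$ by intersecting with $H^{n-1}$, whereas the paper simply notes that $L_\alpha(tH)$ is anti-ample.
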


\begin{proof}
	The statement $(1)$ follows from the proof of Lemma \ref{toracm}; see \eqref{e5025}. 
	
	Since $L_\alpha$ is numerically trivial, Riemann–Roch gives
	\begin{equation} \label{e325}
		\chi\bigl(X,L_\alpha(tH)\bigr)=\chi\bigl(X,\mathcal O_X(tH)\bigr).
	\end{equation}
	For $t>0$, the line bundles $L_{\alpha}(tH)$ and $\cO_X(tH)$ are ample. Thus, for $t>0$ and  $j>0$,  $H^j(X, \cO_X(tH))=H^j(X, L_{\alpha}(tH))=0$ by \cite[Corollary 1.12]{schwede2026frobenius}. Hence, using \eqref{e325}, for all $t>0$, we obtain
	\[
	h^0\bigl(X,L_\alpha(tH)\bigr)=h^0\bigl(X,\mathcal O_X(tH)\bigr)>0.
	\]
	For $t<0$, $H^0(X,L_\alpha(tH))=0$, since $L_\alpha(tH)$ is anti-ample. Since $L_{\alpha}$ is a nontrivial torsion line bundle, $H^0(X, L_{\alpha})=0$; see \eqref{e322}. Thus, $H^0(X,L_\alpha(tH))=0$ if and only if $t \le 0$. This proves $(2)$.
	
	Since $\omega_X\cong\mathcal O_X$, Serre duality gives
	\begin{equation} \label{e5026}
		H^n(X,L_\alpha(tH)) \cong H^0(X,L_\alpha^{-1}(-tH))^\vee. 
	\end{equation}
	Since $L_\alpha^{-1}\cong L_{-\alpha}$ is again a nontrivial line bundle appearing in \eqref{e310}, statement $(2)$ shows that the group in \eqref{e5026} vanishes if and only if $t\geq0$. This proves $(3)$.
\end{proof}

\begin{cor} \label{almostulrichB^i_X}
	Let $X$ be an ordinary smooth projective variety of dimension $n$ with trivial cotangent bundle. Let $H$ be a very ample divisor. Then for $1 \le i \le n$, $s, a \in \bZ$, and $e \ge 1$, the following hold:
	\begin{enumerate}
		\item [(1)] $B_X^i(sH)$ is almost Ulrich if and only if $n-1\leq s\leq2$. In particular, for $n \ge 4$, none of $B_X^i(sH)$ are almost Ulrich.
		\item [(2)] $F^e_*\bigl(Z_X^i(aH)\bigr)$ is almost Ulrich if and only if  $p^e(n-1)<a<2p^e$. Consequently, for $n\geq2$, $F^e_*\bigl(Z_X^i(aH)\bigr)$  is almost Ulrich if and only if $n=2$ and $p^e<a<2p^e$.
	\end{enumerate} 
\end{cor}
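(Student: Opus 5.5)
The plan is to reduce everything to line bundle cohomology, using the decompositions of Proposition \ref{abdecom}:
\[
B_X^i(sH)\cong\bigoplus_{\alpha\neq0}L_\alpha(sH)^{\oplus\binom{n-1}{i-1}},\qquad
Z_X^i\cong\mathcal O_X^{\oplus\binom ni}\oplus\bigoplus_{\alpha\ne0}L_\alpha^{\oplus\binom{n-1}{i-1}}.
\]
Both multiplicities are at least $1$ for $1\le i\le n$, and $\Gamma\neq0$. So a cohomology group of either bundle vanishes if and only if it vanishes for each type of summand that occurs. For the torsion summands we use Lemma \ref{cohotorline}. For the trivial summand we use the analogous facts for $\mathcal O_X(kH)$:
\begin{itemize}
\item[(a)] $H^0(X,\mathcal O_X(kH))=0$ if and only if $k<0$. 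This is clear, since $H^0(X,\mathcal O_X)=k$.
\item[(b)] $H^n(X,\mathcal O_X(kH))=0$ if and only if $k>0$. For $k>0$ this follows from \cite[Corollary 1.12]{schwede2026frobenius}, since $X$ is Frobenius split. For $k<0$, Serre duality with $\omega_X\cong\mathcal O_X$ gives $H^0(\mathcal O_X(-kH))^\vee\neq0$. For $k=0$ we use Lemma \ref{nonvanabelian}.
\end{itemize}

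For (1), the ACM condition (2) of Definition \ref{almostul} holds for every $s$ by Corollary \ref{acmabelianresult}(1). Condition (1) asks that $H^n(X,L_\alpha((s-m)H))=0$ for all $m\le n-1$. By Lemma \ref{cohotorline}(3) this means $s-m\ge0$ for all such $m$, which is equivalent to $s\ge n-1$. Condition (3) asks that $H^0(X,L_\alpha((s-m)H))=0$ for all $m\ge2$. By Lemma \ref{cohotorline}(2) this means $s-m\le0$ for all $m\ge2$, which is equivalent to $s\le2$. Hence the bundle is almost Ulrich exactly when $n-1\le s\le 2$. This range is empty once $n\ge4$.

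For (2), the projection formula gives
\[
H^j\bigl(X,F^e_*(Z_X^i(aH))(tH)\bigr)\cong H^j\bigl(X,Z_X^i((a+p^et)H)\bigr).
\]
Condition (1) then becomes $H^n(Z_X^i((a-p^em)H))=0$ for all $m\le n-1$. By (b) the trivial summand requires $a-p^em>0$. By Lemma \ref{cohotorline}(3) the torsion summands only require $a-p^em\ge0$, so the binding constraint is the trivial one. Taking $m=n-1$, condition (1) is equivalent to $a>p^e(n-1)$.

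Condition (3) becomes $H^0(Z_X^i((a-p^em)H))=0$ for all $m\ge2$. By (a) the trivial summand requires $a-p^em<0$, while Lemma \ref{cohotorline}(2) only requires $\le0$. Taking $m=2$, condition (3) is equivalent to $a<2p^e$.

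It remains to handle the ACM condition (2). When $n=1$ it is vacuous. When $n\ge2$, Corollary \ref{acmabelianresult}(3) says it is equivalent to $p^e\nmid a$. This is automatic when $p^e(n-1)<a<2p^e$: for $n\ge2$ that range forces $n-1<a/p^e<2$, hence $n=2$ and $p^e<a<2p^e$, and that open interval contains no multiple of $p^e$. This proves both the main equivalence and the ``consequently'' clause.

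The main subtlety is bookkeeping rather than any single hard step. One must check that the trivial summand, not the torsion summands, gives the binding strict inequalities in (2). The key input there is the nonvanishing at twist zero from Lemma \ref{nonvanabelian}. One must also treat $n=1$ separately, since the ACM condition is then vacuous.
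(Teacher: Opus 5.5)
Your proposal is correct and follows essentially the same route as the paper. Both arguments use Proposition \ref{abdecom}, Lemma \ref{cohotorline} and Serre duality to get the vanishing table for $H^0$ and $H^n$ of twists of $B_X^i$ and $Z_X^i$, isolate $m=n-1$ and $m=2$ as the binding cases, and finish with Corollary \ref{acmabelianresult}, noting that $p^e\nmid a$ is automatic in the range and that the ACM condition is vacuous for $n=1$. Your explicit treatment of the trivial summand simply spells out the paper's table; for $k=0$ in (b), Serre duality with $\omega_X\cong\mathcal O_X$ already suffices in place of Lemma \ref{nonvanabelian}.
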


\begin{proof}
	By Proposition \ref{abdecom}, Lemma \ref{cohotorline}, and Serre duality, we have
	\[
	\begin{array}{c|cc}
		E & H^0(X,E(tH))=0 & H^n(X,E(tH))=0\\ \hline B_X^i & t\le0 & t\ge0\\
		Z_X^i & t<0 & t>0
	\end{array}
	\]
	Since $B_X^i$ is ACM, the almost-Ulrich conditions for $B_X^i(sH)$ in Definition \ref{almostul} reduce to $H^n\bigl(X, B_X^i((s-m)H)\bigr)=0$ for $m=n-1$ and $H^0\bigl(X, B_X^i((s-m)H)\bigr)=0$ for $m=2$. These vanishings are equivalent to $s-(n-1)\ge0$ and $s-2\le0$. This proves (1).
	
	By the projection formula, we obtain
	\footnotesize
	\begin{equation*}
		H^j\left(X, F^e_*\bigl(Z_X^i(aH)\bigr)\otimes\mathcal O_X(-mH)\right) \cong H^j\left(X, F^e_* \bigg(\bigl(Z_X^i(aH)\bigr)\otimes (F^e)^* \mathcal O_X(-mH)\bigg) \right) \cong H^j(X, Z^i_X((a-p^em)H)).
	\end{equation*}
	\normalsize
	Hence, the corresponding top and zeroth-almost Ulrich conditions for $F_*^e(Z_X^i(aH))$ are $H^n(X, Z^i_X((a-p^em)H))=0$ for $m= n-1$ and $H^0(X, Z^i_X((a-p^em)H))=0$ for $m=2$. These vanishings are equivalent to
	\[
	a-p^e(n-1)>0, \qquad a-2p^e<0.
	\]
	For $n\ge2$, these inequalities force $n=2$ and $p^e<a<2p^e$, hence $p^e \nmid a$. Thus, by Corollary \ref{acmabelianresult}, $F_*^e(Z_X^i(aH))$  is ACM. For $n=1$, the ACM condition is vacuous. This proves (2).
\end{proof}

\begin{remark}
	Since an abelian variety $A$ has $\Omega^1_A$ trivial, the above result applies to ordinary abelian varieties.
\end{remark}

	\section{Classification of varieties with Ulrich twists of $B^i_X$ and $Z^i_X$}
			\label{four}
			
		We work over an algebraically closed field $k$ of characteristic $p>0$. Let $X$ be a smooth projective variety of dimension $n$ over $k$, and let $H$ be a very ample divisor on $X$. In this section, we investigate when $B_X^i(aH)$ or $Z_X^i(aH)$ is Ulrich with respect to $H$ for some integer $a$.


	For $e \ge 1$, recall the vector bundles:
		\begin{equation} \label{e40}
			B_{e}:=\operatorname{coker} \bigl(\mathcal O_X\longrightarrow F_*^e\mathcal O_X\bigr).
		\end{equation}
		When $e=1$, $B_{1}$ is the sheaf of locally exact  $1$-forms $B^1_X$.  We denote $q=p^e$. Twisting \eqref{e40} by $\cO_X(sH)$ for $s \in \bZ$ gives
		\begin{equation} \label{ee42}
			0 \longrightarrow\mathcal O_X(sH) \longrightarrow F_*^e\mathcal O_X(sqH) \longrightarrow B_{e}(sH) \longrightarrow 0.
		\end{equation}
		
		The classification problem for curves for which $B_e(aH)$ is Ulrich is addressed in Section \ref{curclassi}. We therefore restrict attention to $n\ge2$ when studying $B_X^i$. In the following classification result, $X$ is not assumed to be Frobenius split.
		
		\begin{theorem} \label{classisur}
			Let $X$ be a smooth projective surface over $k$, and let $H$ be a very ample divisor. Let $e \ge 1$ and $a \in \bZ$. Then $B_{e}(aH)$ 
			is Ulrich with respect to $H$ if and only if $a=1, e=1, p=2, H^2=1$,
			and $(X,H)\cong\bigl(\mathbb P^2,\mathcal O_{\mathbb P^2}(1)\bigr)$.
		\end{theorem}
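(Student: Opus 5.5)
The plan is to reduce everything to numerical conditions: first force the twist $a$, then use restriction to a curve together with a first Chern class computation to get a relation between $K_X\cdot H$ and $H^2$, and finally invoke adjunction theory for very ample divisors on surfaces to single out $(\mathbb P^2,\mathcal O_{\mathbb P^2}(1))$ with $p=2$, $e=1$. Throughout write $q=p^e$, $d=H^2$, and let $C\in|H|$ be a general smooth curve of genus $g$, so that $2g-2=(K_X+H)\cdot H$.

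\emph{Step 1: pinning down $a$.} By (D1), Ulrich means $H^\bullet(X,B_e((a-1)H))=H^\bullet(X,B_e((a-2)H))=0$. By Corollary \ref{oneformnonvan} and Proposition \ref{nonvanishingexactforms} with $i=1$, we have $H^0(X,B_e(mH))\neq0$ for every $m\ge1$, since $B_1\subseteq B_e$ and $pm\ge2$. Hence $a-1\le0$, so $a\le1$. For the reverse inequality I would use the restriction to $C$ (Lemma \ref{ulrichdual}(4)). An Ulrich bundle $E$ on $C$ with respect to $H|_C$ has slope $\mu(E)=d+g-1$, so
\[
\mu\bigl(B_e|_C\bigr)+ad=d+g-1 .
\]
I will carry $a$ through the computation below and check that the resulting equation, combined with $a\le1$ and $d>0$, forces $a=1$. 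A fallback is the Ulrich requirement $h^0(E)=d\cdot\operatorname{rk}E>0$: for $a\le0$ the sequence \eqref{ee42} embeds $H^0(B_e(aH))$ into $H^1(X,\mathcal O_X(aH))$, which should give a contradiction.

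\emph{Step 2: Chern class and the key relation.} Grothendieck--Riemann--Roch on a surface gives
\[
c_1(F_*E)=p\,c_1(E)+\operatorname{rk}(E)\,\tfrac{p(p-1)}{2}K_X .
\]
Iterating this, I expect $c_1(F^e_*\mathcal O_X)=\tfrac{q(q-1)}{2}K_X$; the cases $e=1$ (which is \eqref{degreeB^1}) and $e=2$ check out, and the general case follows by induction. Therefore
\[
c_1(B_e)=\tfrac{q(q-1)}{2}K_X,\qquad \operatorname{rk}B_e=q^2-1,\qquad \mu(B_e|_C)=\frac{q\,K_X\cdot H}{2(q+1)} .
\]
Substituting into the slope identity with $a=1$ gives $\tfrac{q\,K_X\cdot H}{2(q+1)}=\tfrac{(K_X+H)\cdot H}{2}$, that is,
\[
-K_X\cdot H=(q+1)\,d\ \ge\ 3d .
\]

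\emph{Step 3: adjunction theory, the main obstacle.} I would use the classical fact that for a very ample $H$ on a smooth surface, $K_X+2H$ is nef unless $(X,H)$ is $(\mathbb P^2,\mathcal O(1))$, $(\mathbb P^2,\mathcal O(2))$, or a scroll over a curve. I must make sure this holds in characteristic $p$; I expect it does via Mori's cone theorem for surfaces, which is characteristic-free, but this is the step needing care. If $K_X+2H$ is nef, then $(K_X+2H)\cdot H\ge0$ gives $q+1\le2$, a contradiction. The exceptional cases are handled as follows.
\begin{itemize}
\item For $(\mathbb P^2,\mathcal O(2))$ we get $6=4(q+1)$, which is impossible.
\item For a scroll over a curve of genus $b$ we have $-K_X\cdot H=2d+2-2b\le 2d+2$. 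This forces $3d\le 2d+2$, so $d\le2$. The case $d=2$ (the quadric) gives $4=2(q+1)$, impossible, and $d=1$ is $\mathbb P^2$.
\item For $(\mathbb P^2,\mathcal O(1))$ we get $3=q+1$, so $q=2$, i.e.\ $p=2$, $e=1$, $d=1$.
\end{itemize}

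\emph{Step 4: converse.} On $\mathbb P^2$ with $p=2$, the Frobenius pushforward splits as $F_*\mathcal O_{\mathbb P^2}\cong\mathcal O\oplus\mathcal O(-1)^{\oplus3}$. Hence $B^1_{\mathbb P^2}\cong\mathcal O(-1)^{\oplus3}$ and $B_1(1)\cong\mathcal O^{\oplus3}$, which is Ulrich with respect to $\mathcal O_{\mathbb P^2}(1)$.
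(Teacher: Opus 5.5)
\textbf{Gap: the proposal never actually proves $a\ge1$.} Your Step 1 correctly gives $a\le1$, via Proposition \ref{nonvanishingexactforms} with $i=1$. Step 2 then simply sets $a=1$.

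The curve slope identity cannot do what you promise. With $\mu(B_e|_C)=\frac{q\,K_X\cdot H}{2(q+1)}$ and $2g-2=K_X\cdot H+d$, the identity $\mu(B_e|_C)+ad=d+g-1$ reads
\[
-K_X\cdot H=(q+1)(3-2a)\,d .
\]
This contains the unknown $K_X\cdot H$, so on its own it excludes no value of $a$. Your fallback is not an argument either: it would need an upper bound $h^1(X,\mathcal O_X(aH))<d(q^2-1)$, which you do not supply.

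The underlying reason is that restricting to $C\in|H|$ keeps only one condition, $\chi(E(-H))-\chi(E(-2H))=0$. The paper instead applies Riemann--Roch on $X$ itself. There $\chi(B_e(sH))=\tfrac12\bigl[(q^2-1)s^2d-(q-1)s\,K_X\cdot H\bigr]$ is a nonzero quadratic in $s$ with root $s=0$. Ulrich vanishing adds the roots $s=a-1$ and $s=a-2$, so $a\in\{1,2\}$. The case $a=2$ is then excluded by Corollary \ref{oneformnonvan}.

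\textbf{How to repair it, which also makes Step 3 unnecessary.} Keep $a$ in the relation above. The genus formula then gives $2g-2=d\bigl(1-(q+1)(3-2a)\bigr)$. Since $a\le1$ and $q\ge2$, we have $(q+1)(3-2a)\ge3$. Hence $g\ge0$ forces $d=1$ and $(q+1)(3-2a)=3$, that is, $q=2$ and $a=1$. This is exactly the elementary step \eqref{e45} the paper uses once $-K_X\cdot H=(q+1)d$ is known.

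The adjunction-theoretic classification is therefore not needed. The only consequence you extract from it is $(K_X+2H)\cdot H\ge0$. But $(K_X+2H)\cdot H=2g-2+d\ge d-2$, so this holds automatically for $d\ge2$.

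Your Step 3 also contains two slips, neither of which affects the conclusion:
\begin{itemize}
\item The exceptions you list are those for $K_X+H$. The divisor $K_X+2H$ fails to be nef only for $(\mathbb P^2,\mathcal O_{\mathbb P^2}(1))$.
\item For a scroll over a curve of genus $b$, the hyperplane section is isomorphic to the base, so $(K_X+H)\cdot H=2b-2$. Thus $-K_X\cdot H=d+2-2b$, not $2d+2-2b$.
\end{itemize}

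Your GRR computation $c_1(B_e)=\frac{q(q-1)}{2}K_X$ and your converse are correct. Note, though, that the paper avoids computing $c_1(F^e_*\mathcal O_X)$ altogether by using $\chi(F^e_*\mathcal O_X(sqH))=\chi(\mathcal O_X(sqH))$.
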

		
		\begin{proof}
			We denote $q=p^e$, and $d=H^2$. 
			Since $\chi(F_*^e\mathcal O_X(sqH))=\chi(\mathcal O_X(sqH))$, from \eqref{ee42}, we obtain
			\[
			\chi\bigl(B_{e}(sH)\bigr)=\chi\bigl(\mathcal O_X(qsH)\bigr)-\chi\bigl(\mathcal O_X(sH)\bigr).
			\]
			By the Riemann-Roch theorem for a surface \cite[Example 4.1.2, p. 433]{hartshorne1977algebraic}, we obtain
			\begin{eqnarray} \label{e41}
				\chi\bigl(B_{e}(sH)\bigr)&=& \frac{1}{2}\left[qsH \cdot \big(qsH-K_X\big)\right]-\frac{1}{2}\left[sH \cdot \big(sH-K_X\big)\right]\nonumber\\
				&=& \frac{1}{2}\left[(q^2-1)s^2d-(q-1)sK_X\cdot H\right].
			\end{eqnarray}
			
			Suppose that $B_{e}(aH)$ is Ulrich. Then the Ulrich vanishings give $H^i(X, B_{e}(sH))=0$ for $s=a-1,a-2$ and for all $i$. Thus, the quadratic polynomial in \eqref{e41} vanishes at $s=a-1$ and $s=a-2$. Since $\chi(B_{e})=0$ by \eqref{ee42}, $0$ is one of its roots. Thus, we obtain $a=1$ or $a=2$.
			
			The case $a=2$ is impossible. Indeed, if $B_{e}(2H)$ were Ulrich, then it would be initialized, giving $H^0\bigl(X,B_{e}(H)\bigr)=0$. This contradicts Corollary \ref{oneformnonvan}.
			
			Therefore, $a=1$.  Since $s=a-2=-1$ is a root of  \eqref{e41}, substituting $s=-1$ gives
			\[
			K_X\cdot H=-(q+1)d.
			\]
			Let $C\in |H|$ be a smooth curve. By the adjunction formula, using $q=p^e\geq 2$ and $d\geq 1$, we get
			\begin{equation} \label{e45}
				-2 \le 2g(C)-2=(K_X+H)\cdot H=-(q+1)d+d=-qd \le -2.
			\end{equation}
			Hence $qd=2$, so $q=2$ and $d=1$. Consequently, $p=2$, $e=1$. Since $X$ is a nondegenerate projective surface of degree $d=1$, $X$ must be a linear plane. Therefore, $(X,H)\cong
			\bigl(\mathbb P^2,\mathcal O_{\mathbb P^2}(1)\bigr)$.
			
			Conversely, in characteristic $p=2$,
			\[
			F_*\mathcal O_{\mathbb P^2}\cong \mathcal O_{\mathbb P^2} \oplus \mathcal O_{\mathbb P^2}(-1)^{\oplus 3},
			\]
			and hence $B_{1}(1) \cong \mathcal O_{\mathbb P^2}^{\oplus 3}$. Thus, $B^1_{\bP^2}(1)=B_{1}(1)$ is Ulrich \wrt $\cO_{\bP^2}(1)$.
		\end{proof}

		\begin{cor} \label{surfaceB2}
			Let $X$ be a smooth projective surface over $k$, and let $H$ be a very ample divisor. Let $a\in\mathbb Z$. Then $B^2_X(aH)$ 
			is Ulrich with respect to $H$ if and only if 
			$a=2, p=2, H^2=1$, and $(X,H)\cong \bigl(\mathbb P^2,\mathcal O_{\mathbb P^2}(1)\bigr)$.
		\end{cor}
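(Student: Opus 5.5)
The plan is to reduce Corollary \ref{surfaceB2} to Theorem \ref{classisur} (with $e=1$, so $B_1=B^1_X$) via Cartier duality and the Ulrich dual of Lemma \ref{ulrichdual}(1). With $n=2$ and $j=2$, \eqref{cartierduality} gives
\[
B^2_X\cong (B^1_X)^\vee\otimes\omega_X .
\]
Suppose first that $B^2_X(aH)$ is Ulrich with respect to $H$. By Lemma \ref{ulrichdual}(1), its Ulrich dual
\[
\bigl(B^2_X(aH)\bigr)^\vee\otimes\omega_X(3H)\cong B^1_X\otimes\omega_X^{-1}(-aH)\otimes\omega_X(3H)\cong B^1_X((3-a)H)
\]
is again Ulrich with respect to $H$. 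Applying Theorem \ref{classisur} with $e=1$, we get $3-a=1$, so $a=2$, together with $p=2$, $H^2=1$ and $(X,H)\cong(\mathbb P^2,\mathcal O_{\mathbb P^2}(1))$.

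For the converse, take $p=2$, $X=\mathbb P^2$ and $H=\mathcal O_{\mathbb P^2}(1)$. Theorem \ref{classisur} says that $B^1_{\mathbb P^2}(1)$ is Ulrich. Its Ulrich dual is $B^2_{\mathbb P^2}(2)$, by the same computation read backwards: the dual of the Ulrich bundle $B^1_X(H)$ is $(B^1_X)^\vee\otimes\omega_X(2H)\cong B^2_X(2H)$. So $B^2_{\mathbb P^2}(2)$ is Ulrich. One can also see this explicitly: $B^1_{\mathbb P^2}(1)\cong\mathcal O^{\oplus 3}$ and $\omega_{\mathbb P^2}=\mathcal O(-3)$, so
\[
B^2_{\mathbb P^2}\cong\mathcal O(-1)^{\oplus 3}\otimes\mathcal O(-3)\otimes\mathcal O(2)\cong\mathcal O(-2)^{\oplus 3}.
\]
Hence $B^2_{\mathbb P^2}(2)\cong\mathcal O^{\oplus 3}$, which is Ulrich.

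There is no real obstacle here. The only points needing care are the bookkeeping of the twist in the Ulrich dual, namely $(n+1)H=3H$, and noting that Lemma \ref{ulrichdual}(1) is an involution, so it can be used in both directions.
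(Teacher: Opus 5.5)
Your proof is correct and follows the paper's own argument: Cartier duality $B^2_X\cong(B^1_X)^\vee\otimes\omega_X$ identifies the Ulrich dual of $B^2_X(aH)$ with $B^1_X((3-a)H)$, and since the Ulrich dual is an involution, Theorem \ref{classisur} with $e=1$ gives both directions. One cosmetic point: the intermediate expression in your explicit check is oddly written, since it should read $(B^1_{\mathbb P^2})^\vee\otimes\omega_{\mathbb P^2}\cong\mathcal O(1)^{\oplus 3}\otimes\mathcal O(-3)$, but your conclusion $B^2_{\mathbb P^2}(2)\cong\mathcal O^{\oplus 3}$ is right.
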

		
		\begin{proof}
			Since $\dim X=2$, by the duality in \eqref{cartierduality}, we get $B_X^1\cong (B_X^2)^\vee\otimes\omega_X$.
			Thus, if $B^2_X(aH)$ is Ulrich \wrt $H$, by Lemma \ref{ulrichdual}, its Ulrich dual is 
			\begin{equation}
				\big(B^2_X(aH)\big)^{\vee} \otimes \omega_X(3H) \cong \big(B^2_X\big)^{\vee} \otimes \omega_X((3-a)H) \cong B^1_X \otimes \cO_X((3-a)H).
			\end{equation}
			Therefore, $B^2_X(aH)$ is Ulrich \wrt $H$ if and only if $B^1_X((3-a)H)$ is Ulrich \wrt $H$. The assertion then follows from Theorem \ref{classisur} with $e=1$.
		\end{proof}

		\begin{remark}
			Let $\text{char}(k)$ be $p=2$. By duality in \eqref{cartierduality}, we obtain
			\[
			B_{\mathbb P^2}^2(2) \cong (B_{\mathbb P^2}^1)^\vee \otimes\omega_{\mathbb P^2}(2) \cong \mathcal O_{\mathbb P^2}(1)^{\oplus 3}
			\otimes\mathcal O_{\mathbb P^2}(-1) \cong \mathcal O_{\mathbb P^2}^{\oplus 3}.
			\]
		\end{remark}
		
		We now study whether $B^i_X(aH)$ is Ulrich when $\dim X \ge 3$.
		
		\begin{proposition} \label{P3example}
			Assume that $\text{char}(k)$ is $p=2$. Then $B_{\bP^3}^2(2)$ is Ulrich \wrt $\cO_{\bP^3}(1)$.
		\end{proposition}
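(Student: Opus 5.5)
The plan is to verify condition (D1) of Definition \ref{Eisenbuddefinition} directly. Concretely, I will show that $B^2_{\mathbb P^3}(t)$ has no cohomology for $t=1,0,-1$. Write $\mathbb P=\mathbb P^3$ and $B^i=B^i_{\mathbb P}$.

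\emph{Step 1: reduce to $t=1$ and $t=0$.} Since $n=3$, the duality \eqref{cartierduality} with $j=2$ gives $B^2\cong (B^2)^\vee\otimes\omega_{\mathbb P}=(B^2)^\vee(-4)$. Serre duality then yields
\[
H^q(B^2(-1))\cong H^{3-q}(B^2(1))^\vee, \qquad H^q(B^2)\cong H^{3-q}(B^2)^\vee .
\]
So it suffices to treat $B^2(1)$ and $B^2$.

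\emph{Step 2: the ingredients.} In characteristic $2$, the Frobenius pushforward $F_*\mathcal O_{\mathbb P}$ is computed from the monomials with exponents in $\{0,1\}$ and even total degree. This gives $F_*\mathcal O_{\mathbb P}\cong\mathcal O\oplus\mathcal O(-1)^{6}\oplus\mathcal O(-2)$, hence $B^1\cong\mathcal O(-1)^6\oplus\mathcal O(-2)$. By the projection formula, $H^\bullet(F_*\Omega^1(t))=H^\bullet(\Omega^1(2t))$. By Bott's formula, $\Omega^1(s)$ has cohomology only in the form $H^1$ at $s=0$ (one-dimensional) and $H^0$ for $s\ge2$.

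\emph{Step 3: the case $t=1$.} Here $\Omega^1(1)$ is acyclic, and $B^1(1)=\mathcal O^6\oplus\mathcal O(-1)$ has only $H^0=k^6$. By \eqref{cartierseq}, $Z^1(1)$ therefore has only $H^0\cong k^6$. Also $H^\bullet(F_*\Omega^1(1))=H^\bullet(\Omega^1(2))$ is concentrated in degree $0$, with $h^0=6$. In the sequence \eqref{defseq} twisted by $\mathcal O(1)$, the map $H^0(Z^1(1))\to H^0(F_*\Omega^1(1))$ is injective between $6$-dimensional spaces, hence an isomorphism. Since all higher cohomology of both terms vanishes, $B^2(1)$ is acyclic.

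\emph{Step 4: the case $t=0$.} This is the main point. Since $B^1$ is acyclic, \eqref{cartierseq} gives $H^\bullet(Z^1)=H^1(Z^1)\cong H^1(\Omega^1)=k$, the isomorphism being induced by the Cartier operator. Likewise $H^\bullet(F_*\Omega^1)=H^1(\Omega^1)=k$. The long exact sequence of \eqref{defseq} then reduces to
\[
0\to H^0(B^2)\to H^1(Z^1)\to H^1(F_*\Omega^1)\to H^1(B^2)\to 0,
\]
with all other groups of $B^2$ vanishing. So it remains to show that the middle map, which is induced by the inclusion $Z^1\subset F_*\Omega^1$, is nonzero.

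For this I will use the logarithmic \v{C}ech cocycle $\{d\log(x_i/x_j)\}$ on the standard cover. It consists of closed forms, so it defines a class in $H^1(Z^1)$. Since $C(d\log f)=d\log f$, its image under the Cartier map is $c_1(\mathcal O(1))\neq0$ in $H^1(\Omega^1)$. Its image in $H^1(F_*\Omega^1)=H^1(\Omega^1)$ is the same cocycle viewed as forms, namely again $c_1(\mathcal O(1))\ne0$. The map is therefore nonzero, hence an isomorphism of one-dimensional spaces, and $B^2$ is acyclic.

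The delicate point is Step 4. One has to be careful with the identification $H^1(F_*\Omega^1)\cong H^1(\Omega^1)$, which is an equality of \v{C}ech groups since $F$ is the identity on the underlying spaces. One also has to check that the Frobenius-linear twist does not kill the class; this holds because the log cocycle has coefficients in $\mathbb F_p$.
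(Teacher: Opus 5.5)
Your proof is correct, but it follows a different route from the paper.

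\textbf{Your route.} You verify (D1) head-on.
\begin{itemize}
\item Cartier duality $B^2\cong(B^2)^\vee(-4)$, combined with Serre duality, reduces everything to the acyclicity of $B^2(1)$ and $B^2$.
\item The explicit splitting $F_*\mathcal O_{\mathbb P^3}\cong\mathcal O\oplus\mathcal O(-1)^{\oplus 6}\oplus\mathcal O(-2)$ settles $B^2(1)$.
\item For $B^2$, the entire content is that the map $H^1(Z^1)\to H^1(F_*\Omega^1)$ is nonzero, which you detect with the logarithmic cocycle.
\end{itemize}

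\textbf{The paper's route.} The paper never examines that map.
\begin{itemize}
\item It notes that $(F_*\Omega^1_{\mathbb P^3})(2)$ is $0$-regular, because $H^1(\Omega^1(2))=H^2(\Omega^1)=H^3(\Omega^1(-2))=0$. Hence its quotient $E=B^2(2)$ is globally generated.
\item The same Cartier duality gives $E^\vee\cong E$, so $\det E$ is $2$-torsion.
\item A determinant-of-sections argument then forces $E\cong\mathcal O^{\oplus 14}$.
\end{itemize}

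\textbf{What each buys.} The paper's argument is softer. It needs only three vanishings, no splitting of $F_*\mathcal O$, and no cocycle bookkeeping. Yours is more hands-on. It shows exactly why the class of $\mathcal O(1)$ must survive the inclusion $Z^1\subset F_*\Omega^1$. It would also compute cohomology in situations where ``globally generated with torsion determinant'' is not available.

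\textbf{Points of precision.}
\begin{itemize}
\item Your quoted Bott formula omits $H^3(\Omega^1(s))\neq0$ for $s\le-3$. This is harmless, since you only use $s\in\{0,1,2\}$.
\item The nonvanishing of the log class in characteristic $p$ needs one line of justification. The connecting map $H^0(\mathcal O)\to H^1(\Omega^1)$ of the Euler sequence is an isomorphism, and it sends $1$ to $\{d\log(x_j/x_i)\}$ up to sign.
\item Your remark about $\mathbb F_p$-coefficients is unnecessary. $H^1(F_*\Omega^1)$ and $H^1(\Omega^1)$ are the same abelian group, only with Frobenius-twisted $k$-structures. A nonzero class therefore stays nonzero automatically.
\end{itemize}
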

		
		\begin{proof}
			Denote $E:=B_{\mathbb P^3}^2(2)$, and $\mathcal G:=(F_*\Omega_{\mathbb P^3}^1)(2)$.
			Using the Euler sequence
			$$0 \longrightarrow \Omega^1_{\bP^3} \longrightarrow \cO_{\bP^3}(-1)^{\oplus 4} \longrightarrow \cO_{\bP^3} \longrightarrow 0,$$
			we obtain
			\begin{equation} \label{laste31}
				H^1(\mathbb P^3,\Omega_{\mathbb P^3}^1(2))=H^2(\mathbb P^3,\Omega_{\mathbb P^3}^1)=H^3(\mathbb P^3,\Omega_{\mathbb P^3}^1(-2))=0.
			\end{equation}
			Since $p=2$, the projection formula identifies $\cG(j) \cong (F_*\Omega_{\mathbb P^3}^1)(2+j) \cong F_*(\Omega_{\mathbb P^3}^1(4+2j))$. Thus, \eqref{laste31} gives the vanishings
			\[
			H^j(\mathbb P^3,\mathcal G(-j))=0, \qquad j=1,2,3.
			\]
			Thus $\mathcal G$ is $0$-regular and hence globally generated by \cite[Theorem 1.8.3, p. 99]{lazarsfeld2004positivity}. The defining sequence
			\[
			0\longrightarrow Z_{\mathbb P^3}^1(2) \longrightarrow\mathcal G \longrightarrow E \longrightarrow0
			\]
			shows that $E$ is globally generated.
			Using duality in \eqref{cartierduality} and $\omega_{\mathbb P^3}\cong\mathcal O_{\mathbb P^3}(-4)$, we obtain
			\[
			E^\vee=(B_{\mathbb P^3}^2)^\vee(-2)\cong B_{\mathbb P^3}^2(2) =E.
			\]
			Consequently, $(\det E)^{\otimes2}\cong\mathcal O_{\mathbb P^3}$.
			Choose $r=\operatorname{rk}E$ global sections $s_1,\ldots,s_r$  of $E$ forming a basis at one point, and let $\sigma$ be their determinant. Under the preceding trivialization, $\sigma^{\otimes2}$ is a nonzero constant. The sections define a morphism $\varphi:\mathcal O_{\mathbb P^3}^{\oplus r}\longrightarrow E$. Its determinant is the nowhere-vanishing section $\sigma$. Hence $\varphi$ is locally represented by a square matrix with invertible determinant and is therefore an isomorphism. 
			By  \eqref{ee523}, $\text{rk}(B^2_{\bP^3})=r=2(2^3-1)=14$. Therefore
			\[
			B_{\mathbb P^3}^2(2) =E \cong\mathcal O_{\mathbb P^3}^{\oplus14},
			\]
			which is Ulrich with respect to $\mathcal O_{\mathbb P^3}(1)$.
		\end{proof}

		\begin{theorem} \label{B_X^i(aH)ulthm}
			Let $X$ be a smooth projective variety of dimension $n\geq3$, and let $H$ be a very ample divisor. Let $1\le i\le n$ and $a\in\mathbb Z$.
			If $B_X^i(aH)$ is Ulrich, then necessarily
			\[
			n=3,\qquad p=2,\qquad i=2,\qquad a=2.
			\]
		\end{theorem}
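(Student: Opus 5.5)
The plan is to play the nonvanishing result, Proposition \ref{nonvanishingexactforms}, against the Ulrich vanishing for both $B_X^i(aH)$ and its Ulrich dual. Cartier duality then turns each vanishing into an inequality between $p$, $a$, $i$ and $n$, and combining the two inequalities should force the listed values.

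\textbf{Step 1.} Suppose $E:=B_X^i(aH)$ is Ulrich with respect to $H$. Condition (D1) with $j=1$ gives $H^0\bigl(X,B_X^i((a-1)H)\bigr)=0$. The contrapositive of Proposition \ref{nonvanishingexactforms} says: if $m\ge 0$ and $H^0(X,B_X^i(mH))=0$, then $pm\le i$. So whenever $a-1\ge0$ we get
\[
p(a-1)\le i .
\]

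\textbf{Step 2.} By Lemma \ref{ulrichdual}(1), $E^\vee\otimes\omega_X((n+1)H)$ is also Ulrich. By \eqref{cartierduality}, $(B_X^i)^\vee\otimes\omega_X\cong B_X^{n-i+1}$, so this Ulrich dual is $B_X^{n-i+1}((n+1-a)H)$. Applying Step 1 to it gives $H^0\bigl(X,B_X^{n-i+1}((n-a)H)\bigr)=0$. Hence whenever $n-a\ge 0$ we get
\[
p(n-a)\le n-i+1 .
\]

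\textbf{Step 3: bounding $a$.} This is where the sign condition $m\ge0$ in Proposition \ref{nonvanishingexactforms} must be handled.
\begin{itemize}
\item If $a\le 0$, then $n-a\ge n\ge 0$, and Step 2 gives $pn\le p(n-a)\le n-i+1\le n$, impossible since $p\ge2$.
\item If $a\ge n+1$, then $a-1\ge n$, and Step 1 gives $pn\le i\le n$, again impossible.
\end{itemize}
Hence $1\le a\le n$, and both inequalities of Steps 1 and 2 hold.

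\textbf{Step 4: the numerics.} Adding the two inequalities gives $p(n-1)\le n+1$. With $p\ge2$ this yields $2n-2\le n+1$, so $n\le3$. Since $n\ge3$ by hypothesis, $n=3$. Then $2p\le4$ forces $p=2$. Now the two inequalities read $2(a-1)\le i$ and $2(3-a)\le 4-i$, which sum to $4\le4$. So both are equalities, and $i=2a-2$ is even. With $1\le i\le3$ this gives $i=2$ and hence $a=2$.

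\textbf{Main obstacle.} I expect no serious difficulty. The only delicate point is Step 3, namely ensuring the twist lies in the range $m\ge0$ where Proposition \ref{nonvanishingexactforms} applies. The idea there is to use whichever of $E$ or its Ulrich dual has a nonnegative twist.
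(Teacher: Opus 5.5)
Your proposal is correct and follows the paper's proof essentially verbatim. Both inequalities $p(a-1)\le i$ and $p(n-a)\le n+1-i$ come from Proposition \ref{nonvanishingexactforms}, applied to $B_X^i(aH)$ and to its Ulrich dual $B_X^{n+1-i}((n+1-a)H)$, and summing them forces $n=3$, $p=2$ and $i=a=2$.

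Your Step 3 case split is harmless but not needed. When $a-1<0$ (resp.\ $n-a<0$), the corresponding inequality holds trivially, because its left side is negative while $i\ge 1$ (resp.\ $n+1-i\ge 1$). This is why the paper uses both inequalities directly without any restriction on the sign of the twists.
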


		\begin{proof}
			Suppose that $B_X^i(aH)$ is Ulrich. By duality \eqref{cartierduality}, we have $(B_X^i)^\vee\otimes\omega_X \cong
			B_X^{n+1-i}$. Hence by Lemma \ref{ulrichdual}, the Ulrich dual of $B_X^i(aH)$ is given by
			\[
			E:=B_X^i(aH)^{\vee} \otimes \omega_X \otimes \cO_X((n+1)H) \cong B_X^{n+1-i}\bigl((n+1-a)H\bigr).
			\]
			Since $B_X^i(aH)$ is Ulrich, $H^0\bigl(X,B_X^i((a-1)H)\bigr)=0$ by Definition \ref{uldefin}. 
			By Proposition \ref{nonvanishingexactforms}, this requires
			\begin{equation} \label{ee51}
				p(a-1) \le i.
			\end{equation}
			Similarly, applying Proposition \ref{nonvanishingexactforms} to the Ulrich dual 
			$E=B_X^{n+1-i}((n+1-a)H)$ gives
			\begin{equation} \label{ee52}
				p(n-a) \le n+1-i.
			\end{equation}
			Adding the inequalities \eqref{ee51} and \eqref{ee52} yields
			\[
			2(n-1)\leq p(n-1)\leq n+1.
			\]
			Since $n\geq3$, we obtain $n=3$ and $p=2$. The inequalities \eqref{ee51} and \eqref{ee52} involving $a$ and $i$ are then equalities, so
			\[
			i=2(a-1).
			\]
			Since $1\leq i\leq3$, this forces $i=a=2$.
		\end{proof}
		
		The evidence suggests that if $B^2_X(2H)$ is Ulrich, then one should have $(X, H) \cong (\bP^3, \cO_{\bP^3}(1))$. Lemma \ref{P^3lemma} isolates a precise obstruction to this implication. 
		
		By \cite[Lemma 5.1]{li2013instability}, the rank of $B^i_X$ is given by
		\begin{equation} \label{ee523}
			\text{rk}(B^i_X)=\binom{n-1}{i-1}(p^n-1).
		\end{equation}
		Let $C$ be a smooth complete-intersection curve cut out by $n-1$ general members of $|H|$. By adjunction formula, we have $\big(K_X+(n-1)H\big) \cdot C=2g(C)-2$. Denote $d:=H^n$, and $g:=g(C)$. This gives
		\begin{equation} \label{e462}
			K_X \cdot H^{n-1}= 2g-2-(n-1)H^n=2g-2-(n-1)d.
		\end{equation}

		\begin{lemma} \label{P^3lemma}
			Assume that $n\ge3$ and that $B_X^i(aH)$ is Ulrich with respect to $H$. Then $K_X+4H$ is nef. Moreover, $(K_X+4H) \cdot H^2=0$ if and only if $(X, H) \cong (\bP^3, \cO_{\bP^3}(1))$.
		\end{lemma}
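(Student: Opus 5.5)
The plan is to reduce to the situation forced by Theorem \ref{B_X^i(aH)ulthm}, compute the first Chern class of the Ulrich bundle, and use its global generation to get nefness. By Theorem \ref{B_X^i(aH)ulthm}, the hypothesis forces $n=3$, $p=2$, $i=2$ and $a=2$. So $E:=B_X^2(2H)$ is Ulrich, of rank $14$ by \eqref{ee523}. Ulrich bundles are globally generated, so $\det E$ is globally generated, and hence $c_1(E)$ is nef. It therefore suffices to show that $c_1(E)$ is a positive multiple of $K_X+4H$ in $N^1_{\bQ}(X)$.

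To compute $c_1(B_X^2)$ I will use the additivity of $c_1$ along \eqref{cartierseq} and \eqref{defseq}, together with a formula for $c_1$ of Frobenius pushforwards.
\begin{itemize}
\item For a line bundle $L$, I expect $c_1(F_*L)=c_1(F_*\cO_X)+p^{n-1}c_1(L)$. This holds for $L=F^*M$ by the projection formula, since $F^*M=pM$. In general it should follow from the fact that $F_*$ acts on divisor classes as multiplication by $p^{n-1}$, via Grothendieck--Riemann--Roch in degree one.
\item More generally, for a bundle $V$ of rank $r$ I expect $c_1(F_*V)=r\,c_1(F_*\cO_X)+p^{n-1}c_1(V)$.
\item With $n=3$ and $p=2$, \eqref{degreeB^1} gives $c_1(F_*\cO_X)=c_1(B_X^1)=2K_X$. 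Hence $c_1(F_*\Omega_X^1)=3\cdot 2K_X+4K_X=10K_X$.
\item From \eqref{cartierseq}, $c_1(Z_X^1)=c_1(B_X^1)+c_1(\Omega^1_X)=2K_X+K_X=3K_X$.
\item From \eqref{defseq}, $c_1(B_X^2)=10K_X-3K_X=7K_X$.
\end{itemize}
Therefore
\[
c_1(E)=7K_X+14\cdot 2H=7(K_X+4H),
\]
and $K_X+4H$ is nef.

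As a cross-check, and as a fallback if the pushforward formula is delicate, I can use the standard Ulrich identity $c_1(E)\cdot H^{n-1}=\frac{r}{2}(K_X+(n+1)H)\cdot H^{n-1}$. This identity comes from restricting $E$ to the curve $C$ via Lemma \ref{ulrichdual}(4): on $C$ one has $\deg E|_C=r(g-1+d)$. It also gives $\frac{14}{2}(K_X+4H)\cdot H^2$, which is consistent. The main obstacle is justifying the identity $c_1(F_*V)=r\,c_1(F_*\cO_X)+p^{n-1}c_1(V)$ in $N^1_{\bQ}$. An alternative route is to use the self-duality $E^\vee\cong E$ from the proof of Proposition \ref{P3example}, which holds only when $K_X=-4H$ and so is not available here. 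I would therefore first try to deduce the identity from \cite{kitadais2008canonical}, or from Grothendieck--Riemann--Roch for the finite flat map $F$.

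For the equality statement, I will use \eqref{e462} with $n=3$: it gives $(K_X+4H)\cdot H^2=2g-2-2d+4d=2g-2+2d$. Vanishing of this quantity means $g+d=1$. Since $d\ge1$ and $g\ge0$, this forces $d=1$ and $g=0$. A nondegenerate threefold of degree $1$ is a linear $\bP^3$, so $(X,H)\cong(\bP^3,\cO_{\bP^3}(1))$. Conversely, for $(\bP^3,\cO_{\bP^3}(1))$ we have $K_X=-4H$, so the intersection number is zero.
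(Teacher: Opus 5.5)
Your argument is correct. Its outline matches the paper's: reduce to $n=3$, $p=2$, $i=2$, $a=2$; use global generation of $\det E$; show $c_1(E)$ is a positive multiple of $K_X+4H$; handle the equality case with \eqref{e462}. The difference is in the key step that identifies $\det E$.

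You set the duality route aside as needing $K_X=-4H$, but that is a misreading. Formula \eqref{cartierduality} with $n=3$, $j=2$ gives $B_X^2\cong(B_X^2)^\vee\otimes\omega_X$ on every threefold. Hence $E\cong E^\vee(K_X+4H)$ always, and the untwisted $E^\vee\cong E$ in Proposition \ref{P3example} is only the special case $K_X=-4H$. The paper takes determinants of this twisted self-duality to get $(\det E)^{\otimes 2}\cong\cO_X(14(K_X+4H))$ as line bundles, with no Chern class bookkeeping.

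Your route instead needs $c_1(F_*V)=r\,c_1(F_*\cO_X)+p^{n-1}c_1(V)$. This formula is true. Apply Grothendieck--Riemann--Roch to the relative Frobenius, using that pushforward multiplies divisor classes by $p^{n-1}$; this gives $c_1(F_*V)=p^{n-1}c_1(V)+\tfrac{r p^{n-1}(p-1)}{2}K_X$. That recovers \eqref{degreeB^1} and, for curves, $\deg F_*V=\deg V+r(p-1)(g-1)$. Your arithmetic $c_1(B_X^2)\equiv 7K_X$ is also right and agrees with what the duality forces.

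Your approach yields only a numerical identity, which is enough for nefness. Its advantage is that it computes $c_1(B_X^i)$ without any self-duality, so it also works for indices with $i\neq n+1-i$. The paper's approach is shorter and gives an actual isomorphism of line bundles.

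One small point: the Ulrich degree identity you offer as a fallback only determines $c_1(E)\cdot H^2$. It cannot by itself give nefness of $K_X+4H$, so it serves only as a consistency check.
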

		\begin{proof}
			Since $B^i_X(aH)$ is Ulrich, by Theorem \ref{B_X^i(aH)ulthm}, $n=3, p=2, i=2, a=2$. By the duality in \eqref{cartierduality}, we have $B_X^2\simeq (B_X^2)^\vee\otimes \omega_X$.
			Consequently, for $E=B^2_X(2H)$, we obtain
			\begin{equation} \label{ee524}
				E\simeq E^\vee(K_X+4H).
			\end{equation}
			Since $\text{rk}(E)=14$ by \eqref{ee523}, taking determinants in \eqref{ee524} gives $(\det E)^{\otimes2} \cong \mathcal O_X\bigl(14(K_X+4H)\bigr)$. Since $E$ is Ulrich, it is globally generated, and hence so is $\det E$; see \cite[\href{https://stacks.math.columbia.edu/tag/01CJ}{Lemma 01CJ}]{stacks-project}. Consequently, $\mathcal O_X\bigl(14(K_X+4H)\bigr)$ is globally generated. It follows that $K_X+4H$ is nef.
			
			By \eqref{e462}, we obtain
			\begin{equation} \label{ee530}
				(K_X+4H)\cdot H^2=2(g+d-1).
			\end{equation}
			Since $g\ge0$ and $d\ge1$, \eqref{ee530} gives $(K_X+4H)\cdot H^2=0$ if and only if $g=0,\, d=1$.
			Once $d=1$, the embedding defined by $H$ has degree one. Thus $X$ must be a linear subspace. Hence $(X,H)\simeq
			\bigl(\mathbb P^3,\mathcal O_{\mathbb P^3}(1)\bigr)$.
		\end{proof}

		In the following, we show that $Z_X^i(aH)$ is never Ulrich with respect to any very ample divisor $H$ on $X$.
		
		\begin{theorem} \label{closedformsnotulrich}
			Let $X$ be a smooth projective variety of dimension $n$. For every $1\leq i\leq n$ and every $a\in\mathbb Z$, $Z_X^i(aH)$ is not Ulrich \wrt any very ample divisor $H$.
		\end{theorem}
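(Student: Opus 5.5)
The plan is to mimic the proof of Theorem \ref{B_X^i(aH)ulthm}. Suppose $Z_X^i(aH)$ is Ulrich with respect to $H$. I will extract two inequalities on $a$, one from each of the two ``ends'' of the Ulrich vanishing, using the nonvanishing results for twisted forms. Then I will show that these inequalities leave at most a few exceptional triples $(i,a,p)$, and rule those out separately.

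\emph{First inequality.} By \eqref{cartierseq}, $B_X^i\subset Z_X^i$. Since $Z_X^i(aH)$ is Ulrich, $H^0\bigl(X,Z_X^i((a-1)H)\bigr)=0$, and hence $H^0\bigl(X,B_X^i((a-1)H)\bigr)=0$. Proposition \ref{nonvanishingexactforms} then forces
\[
p(a-1)\le i .
\]

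\emph{Second inequality.} Use the Ulrich dual from Lemma \ref{ulrichdual}. The Cartier pairing \eqref{laste25} identifies $(F_*\Omega_X^i)^\vee\otimes\omega_X\cong F_*\Omega_X^{n-i}$. Under this pairing the annihilator of $Z_X^i$ should be $B_X^{n-i}$; this is the same computation as in \cite{mehta1987varieties} that gives \eqref{cartierduality}. Hence
\[
(Z_X^i)^\vee\otimes\omega_X\cong F_*\Omega_X^{n-i}/B_X^{n-i}.
\]
This quotient contains $Z_X^{n-i}/B_X^{n-i}\cong\Omega_X^{n-i}$ by \eqref{carisom}. So the Ulrich dual $G=\bigl(F_*\Omega_X^{n-i}/B_X^{n-i}\bigr)((n+1-a)H)$ satisfies $H^0(G(-H))\supseteq H^0\bigl(\Omega_X^{n-i}((n-a)H)\bigr)$, and the left side must vanish. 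The section $\theta=\varphi_H^*\Theta$ from the proof of Proposition \ref{nonvanishingexactforms}, multiplied by sections of $\mathcal O_X(H)$, gives nonzero sections of $\Omega_X^{j}(mH)$ for all $m\ge j+1$. It is nonzero because $dx_1\wedge\cdots\wedge dx_j\neq0$ generically. Therefore $n-a\le n-i$, that is, $a\ge i$.

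\emph{Combining.} The two inequalities give $p(i-1)\le p(a-1)\le i$. This leaves only the cases $i=1,\ a=1$ (for any $p$) and $i=2,\ a=2,\ p=2$. These residual cases are the main obstacle.

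For them I would first use the Euler characteristic. Since $\chi(F_*\Omega^i_X(mH))=\chi(\Omega_X^i(pmH))$, the sequences \eqref{defseq} and \eqref{cartierseq} express $\chi(Z_X^i(sH))$ through $\chi(\Omega^\bullet_X(\ast H))$. The Ulrich condition forces the vanishing of $\chi(Z_X^i((a-j)H))$ for $1\le j\le n$ and $h^0(Z_X^i(aH))=d\cdot\operatorname{rk}Z_X^i$. I would compare this with the lower bound $h^0(Z_X^i(aH))\ge h^0(\Omega_X^i(aH))+h^0(B_X^i(aH))$ coming from \eqref{cartierseq}, and try to derive a contradiction.

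If that fails, I would restrict to the complete intersection curve $C$ via Lemma \ref{ulrichdual}(4) and compare degrees. Using \eqref{degreeB^1}-type formulas, $c_1(Z_X^i)$ is computable, and an Ulrich bundle on $C$ must have slope exactly $g-1+d$. For instance, in the case $i=1$, $a=1$ one has $H^0(Z^1_X)\supseteq H^0(B^1_X)$, and the top vanishing pairs with $H^0(\Omega^{n-1}_X((n-1)H))$. I expect the slope computation to give the contradiction here, as in the curve case where $Z^1_C\cong F_*\omega_C$ has the wrong slope.
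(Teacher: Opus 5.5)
Your two inequalities are both correct. The bound $p(a-1)\le i$ is exactly the upper bound the paper uses. The Ulrich-dual step is also sound, giving $a\ge i$: $B_X^{n-i}$ annihilates $Z_X^i$ under the Cartier pairing because $w\wedge d\eta=\pm d(w\wedge\eta)$ is killed by $C^n$, the ranks agree, and $F_*\Omega_X^{n-i}/B_X^{n-i}$ is locally free, so $(Z_X^i)^\vee\otimes\omega_X\cong F_*\Omega_X^{n-i}/B_X^{n-i}$.

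But $a\ge i$ is one short of what is needed. The proof stops at the residual cases $a=i$, namely $(i,a)=(1,1)$ and $(i,a,p)=(2,2,2)$, and these are left open. The tools you suggest do not close them in general:
\begin{itemize}
\item The slope test on $C$ is passed for $n=3$, $p=2$, $i=a=2$ whenever $K_X\cdot H^2=0$, e.g.\ on abelian or Calabi--Yau threefolds. Indeed $c_1(Z_X^2)=9K_X$ and $\operatorname{rk}Z_X^2=17$, and the condition $\mu\bigl(Z_X^2(2H)|_C\bigr)=g-1+d$ is equivalent to $K_X\cdot H^2=0$.
\item The Euler-characteristic conditions on $X$ only impose relations among Chern numbers. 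For a Calabi--Yau threefold they reduce to $c_3(X)=0$ and $19\,c_2(X)\cdot H=34H^3$, and you have not shown these to be inconsistent.
\item The estimate $h^0(Z_X^i(aH))\ge h^0(\Omega_X^i(aH))+h^0(B_X^i(aH))$ goes the wrong way. From $0\to B_X^i\to Z_X^i\to\Omega_X^i\to0$ it is an upper bound.
\end{itemize}

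The missing ingredient is a genuine cohomological nonvanishing at $a=i$. The paper obtains it directly on the curve $C$, without the Ulrich dual. The exterior-power filtration of the conormal sequence $0\to\mathcal O_C(-h)^{\oplus(n-1)}\to\Omega_X^1|_C\to\omega_C\to0$ gives a surjection $\Omega_X^i|_C\twoheadrightarrow\omega_C(-(i-1)h)^{\oplus\binom{n-1}{i-1}}$. Composing with the Cartier surjection $Z_X^i\twoheadrightarrow\Omega_X^i$ yields, for $E=Z_X^i(aH)$,
\[
E|_C(-h)\twoheadrightarrow\omega_C((a-i)h)^{\oplus\binom{n-1}{i-1}}.
\]
$H^1$ is right exact on a curve and $E|_C$ is Ulrich, so $H^1(C,\omega_C((a-i)h))=0$. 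By Serre duality this means $H^0(C,\mathcal O_C((i-a)h))=0$, hence $a\ge i+1$. At $a=i$ the obstruction is simply $H^1(C,\omega_C)=k\neq0$. Combined with your first inequality this gives $pi\le p(a-1)\le i$, a contradiction since $p\ge2$, with no case analysis needed.
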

		
		\begin{proof}
			Let $H$ be a very ample divisor on $X$. Arguing by contradiction, suppose that $E:=Z_X^i(aH)$ is Ulrich \wrt $H$. Let $C$ be a general smooth complete intersection of $n-1$ members of $|H|$. We take $C=X$ if $n=1$. Denote $h=H|_C$. The conormal sequence is
			\[
			0\longrightarrow\mathcal O_C(-h)^{\oplus(n-1)} \longrightarrow\Omega_X^1|_C \longrightarrow\omega_C \longrightarrow0.
			\]
			Put $N=\cO_C(-h)^{\oplus(n-1)}$. Since $\omega_C$ has rank one, the exterior-power filtration
			associated with the conormal sequence
			\cite[Chapter~II, Exercise~5.16(d)]{hartshorne1977algebraic}
			gives a short exact sequence
			\begin{equation} \label{laste411}
				0\longrightarrow\bigwedge^iN \longrightarrow\Omega_X^i|_C \longrightarrow\bigwedge^{i-1}N\otimes\omega_C \longrightarrow0.
			\end{equation}
			Now $\bigwedge^{i-1}N \cong \mathcal O_C(-(i-1)h)^{\oplus\binom{n-1}{i-1}}$.
			We have the surjection $Z_X^i\twoheadrightarrow\Omega_X^i$ from \eqref{cartierseq}. Since restriction preserves surjections, from \eqref{laste411}, we obtain the surjection
			\begin{equation} \label{laste412}
				E|_C(-h)=Z_X^i|_C((a-1)h) \twoheadrightarrow\Omega_X^i|_C((a-1)h) \twoheadrightarrow
				\bigl(\omega_C((a-i)h)\bigr)^{\oplus\binom{n-1}{i-1}}.
			\end{equation}
			
			Since $E|_C$ is Ulrich \wrt $h$ by Lemma \ref{ulrichdual}, we have
			\[
			H^1(C,E|_C(-h))=0.
			\]
			Since $C$ is a curve, taking the \les in cohomology for \eqref{laste412}, we get 
			\[
			H^1(C, \bigl(\omega_C((a-i)h)\bigr))=0.
			\]
			Serre duality therefore gives $H^0\bigl(C,\mathcal O_C((i-a)h)\bigr)=0$. Since $h$ is very ample, this forces $a\geq i+1$. Consequently,
			\[
			p(a-1)\geq pi\geq i+1.
			\]
			Since $B_X^i \subset Z^i_X$, Proposition \ref{nonvanishingexactforms} gives
			\[
			0\neq H^0\bigl(X,B_X^i((a-1)H)\bigr) \subseteq H^0(X,E(-H))=0,
			\]
			a contradiction.
		\end{proof}

	\section{Classification of abelian varieties with Ulrich $B^1_X \otimes L$}
	\label{five}
	
	Throughout, we work over an algebraically closed field $k$ of characteristic $p>0$. Let $A$ be an abelian variety of dimension $n$. In this section, we prove that $A$ is isogenous to a product of $n$ elliptic curves if and only if there exist a very ample line bundle $H$ and a line bundle $L$ on $A$ such that $B_A^1\otimes L$ is Ulrich with respect to $H$.
		
Let $L$ be a line bundle on $A$, and let $H$ be a very ample divisor on $A$. We consider the line bundle
\[
M_j:=L\otimes H^{-j},\qquad 1\leq j\leq n.
\]
Tensoring \eqref{Fsplitseq} with $M_j$ gives the \ses
\[
0\longrightarrow M_j \longrightarrow F_*M_j^{\otimes p} \longrightarrow B_A^1\otimes M_j \longrightarrow0.
\]
Therefore, by condition $D_1$ of Definition \ref{ulprop}, we obtain
\begin{cor} \label{lcor1}
	The sheaf $B_A^1\otimes L$ is Ulrich \wrt $H$ if and only if $H^i(A, M_j) \longrightarrow H^i(A, M_j^{\otimes p})$
	is bijective for every $i$ and $1 \le j \le n$.
\end{cor}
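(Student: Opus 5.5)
The plan is to apply condition (D1) of Definition \ref{ulprop} to $E=B_A^1\otimes L$ and read off the required vanishings from the long exact cohomology sequence of the displayed short exact sequence
\[
0\longrightarrow M_j \longrightarrow F_*M_j^{\otimes p} \longrightarrow B_A^1\otimes M_j \longrightarrow 0 .
\]
By (D1), $B_A^1\otimes L$ is Ulrich with respect to $H$ if and only if $H^\bullet\bigl(A,(B_A^1\otimes L)(-jH)\bigr)=H^\bullet(A,B_A^1\otimes M_j)=0$ for $1\le j\le n$. So it suffices to show that, for each fixed $j$, the vanishing of all cohomology of the quotient term is equivalent to bijectivity of the maps $H^i(A,M_j)\to H^i(A,F_*M_j^{\otimes p})$ for all $i$.

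First I will justify the identifications in the displayed sequence. Tensoring \eqref{Fsplitseq} with $M_j$ and using the projection formula $F_*\mathcal O_A\otimes M_j\cong F_*(F^*M_j)\cong F_*(M_j^{\otimes p})$ gives the middle term. Since $F$ is finite, hence affine, we have $H^i(A,F_*M_j^{\otimes p})\cong H^i(A,M_j^{\otimes p})$. Under this isomorphism the map induced on cohomology by $M_j\to F_*M_j^{\otimes p}$ is the $p$-linear Frobenius pullback $H^i(A,M_j)\to H^i(A,M_j^{\otimes p})$. Here ``bijective'' means bijective as a map of sets or abelian groups, which is exactly the condition that the $\mathcal O_A$-linear map $H^i(A,M_j)\to H^i(A,F_*M_j^{\otimes p})$ is an isomorphism.

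The main step is the long exact sequence
\[
\cdots\to H^{i-1}(B_A^1\otimes M_j)\to H^i(M_j)\xrightarrow{\ \phi_i\ } H^i(M_j^{\otimes p})\to H^i(B_A^1\otimes M_j)\to H^{i+1}(M_j)\xrightarrow{\ \phi_{i+1}\ }\cdots .
\]
If every $\phi_i$ is bijective, then each $H^i(B_A^1\otimes M_j)$ sits between $\operatorname{coker}\phi_i=0$ and $\ker\phi_{i+1}=0$, so it vanishes. Conversely, if $H^\bullet(B_A^1\otimes M_j)=0$, exactness at $H^i(M_j)$ gives $\phi_i$ injective, since its kernel is the image of $H^{i-1}(B_A^1\otimes M_j)=0$. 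Exactness at $H^i(M_j^{\otimes p})$ gives $\phi_i$ surjective, since its cokernel injects into $H^i(B_A^1\otimes M_j)=0$. Both directions hold for all $i\ge 0$, with the convention $H^{-1}=0$.

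There is no real obstacle here; the only point needing care is the semilinearity of the Frobenius-induced map $\phi_i$. I will handle it by stating the criterion for the $k$-linear map into $H^i(A,F_*M_j^{\otimes p})$ and then noting that the identification with $H^i(A,M_j^{\otimes p})$ preserves bijectivity.
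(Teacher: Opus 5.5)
Your proposal is correct and follows the paper's own argument: the paper also tensors \eqref{Fsplitseq} with $M_j$ and applies condition (D1) to the resulting long exact sequence in cohomology. You spell out what the paper leaves implicit (the projection-formula and affineness identifications, the exactness bookkeeping, and the harmless $p$-linearity of the induced map, to which bijectivity is insensitive).
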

In particular, it suffices that both $M_j$ and $M_j^{\otimes p}$ are acyclic for every $1 \le j \le n$.

\begin{remark}[Abelian varieties of dimension $1$]
	\label{sec5:rem1}
	Let $E$ be an elliptic curve and put $M=L\otimes H^{-1}$.
	Then $B_E^1\otimes L$ is Ulrich with respect to $H$ if and only if $\deg M=0$ and either $M^{\otimes p}\not\cong\mathcal O_E$, or both $M \cong\mathcal O_E$ and $E$ is ordinary. 
	
	Indeed, if $B_E^1\otimes L$ is Ulrich, by Corollary \ref{lcor1}, $\chi(M)=\chi(M^{\otimes p})$. Then Riemann-Roch gives $\deg M=0$. Now if $M^{\otimes p} \cong\mathcal O_E$, then Corollary \ref{lcor1} gives $h^0(E, M)=1$. Thus, $M \cong \cO_E$. Moreover, when $M \cong \mathcal O_E$ the Frobenius maps on $H^i(E, \cO_E)$ are bijective, and that is precisely ordinarity.
	
	Conversely, if $\deg M=0$ and $M^{\otimes p}\not\cong\mathcal O_E$, then both $M$ and $M^{\otimes p}$ are nontrivial degree-zero line bundles and hence acyclic, so Corollary \ref{lcor1} applies.
\end{remark}

Let $A$ be an abelian variety of dimension $n$. Let $M$ be a line bundle on $A$. Let $t_x: A \longrightarrow A$ be the translation  by $x \in A$. Then we define
\[
\phi_M:A\longrightarrow\widehat A, \qquad x \longmapsto t_x^*M \otimes M^{-1}.
\]
Under the canonical identification $\varpi:A\simeq\widehat{\widehat A}$, $\phi_M$ is symmetric by \cite[Proposition 7.4.3, p. 69]{conrad2015abelian}; that is,
\begin{equation} \label{lastee6022}
	\phi_M^\vee \circ \varpi=\phi_M.
\end{equation}

\begin{lemma} \label{lasrkeylem1}
	Let $A$ have dimension $n\geq2$, and let $M$ be a line bundle such that
	\[
	K:=\bigl(\ker\phi_M\bigr)^0_{\mathrm{red}}
	\]
	is an elliptic curve. Put $\eta=M|_K$. Assume that $\eta^{\otimes p}\not\cong\mathcal O_K$. Then $H^i(A,M) = H^i(A,M^{\otimes p})=0$ for every $i$.
\end{lemma}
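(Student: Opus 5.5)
The approach is the standard structure theory of line bundles on abelian varieties (Mumford, Chapter III, \S 16 and the vanishing theorem). Set $K(M):=\ker\phi_M$. Since $K=(K(M))^0_{\mathrm{red}}$ is an elliptic curve, $M$ is not ample, and $M$ is not in $\operatorname{Pic}^0$ (otherwise $K(M)=A$ and $\dim K=n\ge2$). We reduce to a quotient: put $B:=A/K$, an abelian variety of dimension $n-1$, with quotient map $\pi:A\to B$.

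\textbf{Step 1 (descent).} Choose $x\in A$ with $t_x^*M|_K\cong \mathcal O_K$; this is possible only after twisting. Since $\phi_M$ kills $K$, the restriction $M|_{K}$ is of degree zero, so $\eta\in\operatorname{Pic}^0(K)$. A cleaner route is the standard fact that $M$ restricted to every translate $K+y$ lies in $\operatorname{Pic}^0(K)$, and $y\mapsto M|_{K+y}\otimes\eta^{-1}$ varies. In fact $\phi_M|_K=0$ implies $t_y^*M\otimes M^{-1}\in\operatorname{Pic}^0(A)$ restricts trivially along $K$ for $y\in K$. We then use Mumford's argument (the ``$K(L)$ finite iff nondegenerate'' proof): there is $P\in\operatorname{Pic}^0(A)$ with $M\otimes P^{-1}\cong\pi^*N$ for a nondegenerate line bundle $N$ on $B$. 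Restricting to $K$ gives $P|_K\cong\eta$.

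\textbf{Step 2 (Künneth-type computation).} Since $\pi$ is smooth with fibres translates of $K$, and $P|_{K+y}\cong\eta$ (degree-zero line bundles are translation-invariant), we have $R^j\pi_*(P)=0$ for all $j$ whenever $\eta\not\cong\mathcal O_K$, because $H^\bullet(K,\eta)=0$. The projection formula then gives $R^j\pi_*M=N\otimes R^j\pi_*P=0$, and the Leray spectral sequence yields $H^i(A,M)=0$ for all $i$. For $M^{\otimes p}=\pi^*N^{\otimes p}\otimes P^{\otimes p}$, the same argument works with $P^{\otimes p}|_K\cong\eta^{\otimes p}\not\cong\mathcal O_K$. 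Note that the hypothesis $\eta^{\otimes p}\not\cong\mathcal O_K$ also implies $\eta\not\cong\mathcal O_K$.

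\textbf{Main obstacle.} The main difficulty is Step 1: producing the decomposition $M\cong \pi^*N\otimes P$ with $P\in\operatorname{Pic}^0(A)$, given only that the reduced identity component of $K(M)$ is $K$. I would argue as follows. Let $K(M)^0$ be possibly non-reduced, and use that $M|_K$ lies in $\operatorname{Pic}^0(K)$, because $\phi_{M|_K}=\phi_M|_K$ composed with restriction is zero. The restriction $\operatorname{Pic}^0(A)\to\operatorname{Pic}^0(K)$ is surjective, since it is dual to the inclusion $K\hookrightarrow A$, which has finite kernel, so the dual map is an isogeny onto $\widehat K$. We can therefore pick $P\in\operatorname{Pic}^0(A)$ with $P|_K\cong\eta$. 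Then $M':=M\otimes P^{-1}$ satisfies $K(M')=K(M)\supset K$ and $M'|_K$ is trivial. By Mumford's descent criterion (\S 12, Theorem 2 together with the argument of \S 16 using the Mumford bundle $\Lambda(M')$ restricted to $A\times K$ being trivial), $M'$ descends along $\pi$ to $B$. We never need $N$ to be nondegenerate; only the fibrewise acyclicity in Step 2 is used. This avoids the delicate non-reduced structure of $K(M)$.
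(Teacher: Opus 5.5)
Your argument is correct, but it reaches the key fibrewise statement by a different route from the paper.

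\textbf{The paper's route.} The paper does no descent. For arbitrary $x\in A$ it writes $(t_x|_K)^*(M|_{x+K})\cong\iota^*t_x^*M\cong\eta\otimes\iota^*\phi_M(x)$. It then kills the last factor using the symmetry of $\phi_M$, namely $\iota^\vee\circ\phi_M=(\phi_M\circ\iota)^\vee\circ\varpi=0$. So every fibre of $A\to A/K$ carries a translate of $\eta$ (resp.\ $\eta^{\otimes p}$), and base change plus Leray finish the proof.

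Your remark in Step 1 that $t_y^*M\otimes M^{-1}$ restricts trivially to $K$ for $y\in K$ is vacuous, since that bundle is already trivial on $A$. Extending it to all $y\in A$ via symmetry is exactly the paper's shortcut, and it would have made your descent unnecessary.

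\textbf{Your route.} You instead choose $P\in\operatorname{Pic}^0(A)$ with $P|_K\cong\eta$, descend $M'=M\otimes P^{-1}$ to $B$, and recover the fibrewise statement from the translation invariance of $P$. What this buys is a structural decomposition $M\cong\pi^*N\otimes P$. Here $N$ is in fact nondegenerate on $B$: since $\pi^{-1}(K(N))\subseteq K(M)$, which is $1$-dimensional, we get $\dim K(N)=0$. The cost is a descent step along a non-finite quotient, whereas the paper needs only the theorem of the square and $\phi_M^\vee\circ\varpi=\phi_M$.

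\textbf{Points to repair.}

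First, Mumford's \S12, Theorem 2 concerns quotients by \emph{finite} group schemes, so it does not cover $A\to A/K$ with $K$ an elliptic curve. Justify the descent directly:
on $A\times K$, the bundle $m^*M'\otimes p_1^*M'^{-1}$ is trivial on each $A\times\{y\}$ because $y\in K\subseteq K(M')$. It is also trivial on $\{0\}\times K$ because $M'|_K$ is trivial. So it is trivial by the seesaw theorem. Normalize the resulting isomorphism $m^*M'\cong p_1^*M'$ to be the identity on $A\times\{0\}$. The cocycle condition then holds automatically: the two competing isomorphisms on $A\times K\times K$ differ by an element of $\mathcal O(A\times K\times K)^*=k^*$, which equals $1$ on $A\times\{0\}\times\{0\}$. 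Since $A\times_BA\cong A\times K$, fppf descent along the smooth surjection $\pi$ gives $M'\cong\pi^*N$.

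Second, delete the claim that $y\mapsto M|_{K+y}\otimes\eta^{-1}$ varies. It is constant and trivial, as both the paper's computation and your own decomposition show.

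Third, $\iota^\vee\colon\widehat A\to\widehat K$ is surjective, not an isogeny, since the dimensions are $n$ and $1$.
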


\begin{proof}
	Let
	\[
	q:A\longrightarrow B:=A/K
	\]
	be the quotient. Let $b \in B(k)$ and choose $x \in A(k)$ such that $q(x)=b$. Since $\ker q=K$, the fiber is $A_b=q^{-1}\{b\}=x+K$. Let $\iota:K\hookrightarrow A$ be the inclusion. Since $\phi_\eta=\iota^\vee\circ\phi_M\circ\iota=0$,
	the line bundle $\eta$ has degree zero. The translation $t_x: A \longrightarrow A$ restricts to an isomorphism
	\[
	t_x \vert_{K}:K\xrightarrow{\sim}A_b, \qquad y\longmapsto x+\iota(y).
	\]
	Its composite with the inclusion $A_b\hookrightarrow A$ is $t_x\circ\iota$. Consequently,
	\begin{equation} \label{e509}
		(t_x \vert_{K})^*(M|_{A_b})=\iota^*t_x^*M \cong t_x^*M \vert_{K}.
	\end{equation}
	
	Now $t_x^*M\otimes M^{-1}=\phi_M(x)\in \text{Pic}^0(A)$.
	Since $\phi_M^{\vee} \circ \varpi=\phi_M$ by \eqref{lastee6022}, dualizing $\phi_M \circ \iota=0$ gives
	\[
	\iota^\vee\circ\phi_M=(\phi_M\circ\iota)^\vee\circ\varpi=0.
	\]
	Thus $\phi_M(x) \vert_{K} \cong \cO_K$. Hence, by \eqref{e509}, we have 
	\begin{equation} \label{e510}
		(t_x \vert_{K})^*(M|_{A_b}) \cong M \vert_{K}=\eta.
	\end{equation}
	Similarly, we obtain
	\begin{equation} \label{e511}
		(t_x \vert_{K})^*\!\left(M^{\otimes p}|_{A_b}\right) \cong\eta^{\otimes p}.
	\end{equation}
	
	Since $\deg \eta =0$ and $\eta^{\otimes p}$ is nontrivial, both $\eta$ and $\eta^{\otimes p}$ are acyclic on $K$, that is, $H^{r}(K, \eta)=H^{r}(K, \eta^{\otimes p})=0$ for $r=0, 1$. Thus, by \eqref{e510} and \eqref{e511}, we get $H^{r}(A_b, M|_{A_b})=H^{r}(A_b, M^{\otimes p}|_{A_b})=0$ for $r=0, 1$. Therefore, by the base change theorem \cite[Theorem 4.2(d), p. 21]{milneAV}, for all $r \ge 0$, we have
	\[
	R^rq_*M=R^rq_*(M^{\otimes p})=0.
	\]
	Therefore, by the Leray spectral sequence, for every $i$, we obtain $H^i(A,M)=H^i(A,M^{\otimes p})=0$.
\end{proof}

\begin{theorem} \label{sec5:thm1}
	Let $A$ be an abelian variety of dimension $n\geq2$, let $H$ be a very ample divisor on $X$, and let $L$ be a line bundle on $X$. Define
	\[
	M_j=L\otimes H^{-j}, \qquad K_j=(\ker\phi_{M_j})^0_{\mathrm{red}}, \qquad 1\leq j\leq n.
	\]
	Assume $B_A^1\otimes L$ is Ulrich with respect to $H$. Then we have
	\begin{equation} \label{laste601}
		(L-jH)^n=0 \qquad(1\leq j\leq n).
	\end{equation}
	Moreover, conditions \eqref{laste601} imply that every $K_j$ is an elliptic curve and that the addition map
	\[
	K_1\times\cdots\times K_n\longrightarrow A
	\]
	is an isogeny.
\end{theorem}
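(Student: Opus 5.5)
The plan has two parts: first derive \eqref{laste601} from Ulrich vanishing via Riemann--Roch, then read off the structure of $A$ from the real eigenvalues of a Rosati-symmetric endomorphism. For \eqref{laste601}, apply Corollary \ref{lcor1}: the maps $H^i(A,M_j)\to H^i(A,M_j^{\otimes p})$ are bijective for all $i$, so $\chi(M_j)=\chi(M_j^{\otimes p})$ for $1\le j\le n$. By Riemann--Roch on an abelian variety, $\chi(N)=(N^n)/n!$ for any line bundle $N$. Hence
\[
(M_j^n)=(pM_j)^n=p^n(M_j^n),
\]
and since $p^n\neq 1$ we get $(L-jH)^n=0$ for $1\le j\le n$.

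Now assume \eqref{laste601}. Consider $P(t):=(L-tH)^n$, a polynomial of degree $n$ in $t$ with leading coefficient $(-1)^nH^n\neq0$. It vanishes at $t=1,\dots,n$, so
\[
P(t)=(-1)^n d\prod_{j=1}^n(t-j), \qquad d:=H^n .
\]
In particular all $n$ roots are simple. Next, pass to $\operatorname{End}^0(A)=\operatorname{End}(A)\otimes\mathbb Q$ and set $\alpha:=\phi_H^{-1}\circ\phi_L$. Then $\phi_H^{-1}\phi_{M_j}=\alpha-j$, and $\alpha$ is symmetric for the Rosati involution attached to $H$. By Mumford's theory (Abelian Varieties, \S\S16, 21), the reduced characteristic polynomial of a Rosati-symmetric element is, up to the nonzero constant $d$, the polynomial $t\mapsto (tH-L)^n$. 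Its roots are real, and since the Rosati form is positive definite, $\alpha$ acts semisimply, e.g.\ on $V_\ell A$. So $P$ is, up to sign and scalar, the reduced characteristic polynomial of $\alpha$, and $\alpha$ has the $n$ distinct eigenvalues $1,\dots,n$, each of multiplicity one.

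The key step, and the one I expect to be the main obstacle, is to translate this into geometry: for a semisimple symmetric $\beta=\alpha-j$, the dimension of $(\ker\phi_{M_j})^0$ should equal the multiplicity of $0$ as a root of the reduced characteristic polynomial of $\beta$. I would argue this on $V_\ell A$, which is free of rank $2$ over $\operatorname{End}^0$-reduced-degree considerations. By semisimplicity, the tangent space of $\ker(\beta)^0$ is the $0$-eigenspace, of $\mathbb Q_\ell$-dimension $2\cdot(\text{multiplicity})$, whereas $\dim V_\ell K=2\dim K$. Hence $\dim K_j=1$, so each $K_j$ is an elliptic curve. Concretely, one can realize $K_j$ as the connected component of the image of $N\prod_{i\ne j}(\alpha-i)$ for a suitable integer $N$ clearing denominators. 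Here I would invoke the Poincaré reducibility style argument that images and kernels of endomorphisms are abelian subvarieties.

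Finally, show that the addition map $s\colon K_1\times\cdots\times K_n\to A$ is an isogeny. On $V_\ell$, the decomposition into the eigenspaces $V_\ell K_j$ of $\alpha$ for the distinct eigenvalues $j$ is a direct sum. So $V_\ell(s)$ is injective, which means $\ker s$ is finite. Since $\dim\prod_j K_j=n=\dim A$, the map $s$ is surjective, hence an isogeny.
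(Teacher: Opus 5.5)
Your proof is correct, and after the Riemann--Roch step (which matches the paper exactly) it takes a genuinely different route.

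The paper never uses characteristic polynomials or eigenvalues. It argues as follows:
\begin{enumerate}
\item From $\deg\phi_{M_j}=\chi(M_j)^2=0$ it gets $\dim K_j>0$.
\item From $\phi_L\iota_j=j\,\phi_H\iota_j$, the symmetry of $\phi_L$ and $\phi_H$, and torsion-freeness of $\Hom(K_j,\widehat K_r)$, it derives $\iota_r^\vee\phi_H\iota_j=0$ for $r\neq j$.
\item Hence $s^\vee\phi_H s=\operatorname{diag}(\phi_{H|_{K_1}},\ldots,\phi_{H|_{K_n}})$ is an isogeny, so $\ker s$ is finite.
\item The count $\sum_j\dim K_j\le n$, with each $\dim K_j\ge1$, then forces $\dim K_j=1$.
\end{enumerate}
This is the intrinsic version of your linear algebra. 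The relation $\iota_r^\vee\phi_H\iota_j=0$ is orthogonality, with respect to the form $\phi_H$, of eigenspaces of the $\phi_H$-self-adjoint operator $\alpha=\phi_H^{-1}\phi_L$ for distinct eigenvalues. Ampleness of $H|_{K_j}$ plays the role of positivity.

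Your version imports more machinery: Mumford's identification of the characteristic polynomial of $V_\ell(\alpha)$ with $\bigl((tH-L)^n/H^n\bigr)^2$, and semisimplicity of Rosati-symmetric elements. In exchange you get a sharper picture: $V_\ell A=\bigoplus_j V_\ell K_j$ with each summand $2$-dimensional, $V_\ell(s)$ an isomorphism, and $K_j$ realized as the image of an interpolation polynomial in $\alpha$. The paper's route is more self-contained, using only $\deg\phi_M=\chi(M)^2$, symmetry of $\phi_M$, and torsion-freeness of $\Hom$.

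A few points of presentation need fixing.
\begin{itemize}
\item Replace ``tangent space'' by $V_\ell$ with $\ell\neq p$. In characteristic $p$ the Lie algebra of $\ker\beta$ is not the relevant object, since kernels can be non-reduced.
\item Say explicitly how the quasi-endomorphism $\alpha-j$ relates to $\phi_{M_j}$. Choose $\psi\colon\widehat A\to A$ with $\psi\circ\phi_H=[N]$, so that $N(\alpha-j)=\psi\circ\phi_{M_j}$ is an honest endomorphism. Its kernel contains $\ker\phi_{M_j}$ with finite quotient, so $K_j=(\ker N(\alpha-j))^0_{\mathrm{red}}$ and $V_\ell K_j=\ker V_\ell(\alpha-j)$.
\item Drop the phrase about $V_\ell A$ being ``free of rank $2$ over $\operatorname{End}^0$-reduced-degree considerations''; it has no content.
\item Semisimplicity is not actually needed. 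The space $\ker V_\ell(\alpha-j)$ is nonzero because $j$ is an eigenvalue, it is even-dimensional because it equals $V_\ell K_j$, and its dimension is at most the algebraic multiplicity $2$. So it is $2$-dimensional. Eigenspaces for the distinct eigenvalues $1,\dots,n$ are independent regardless of semisimplicity.
\end{itemize}
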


\begin{proof}
	Suppose first that $B_A^1\otimes L$ is Ulrich. By Corollary \ref{lcor1}, we obtain
	\begin{equation} \label{e500}
		\chi(A,M_j)=\chi(A,M_j^{\otimes p}).
	\end{equation}
	Riemann–Roch on an abelian variety gives \cite[Theorem 11.1, p. 54]{milneAV}
	\[
	\chi(A,M)=\frac{c_1(M)^n}{n!}.
	\]
	This gives $\chi(A,M^{\otimes p})=\frac{p^nc_1(M)^n}{n!}$. Therefore, by \eqref{e500}, we get
	\[
	(p^n-1)(L-jH)^n=0,
	\]
	which gives the numerical conditions $(L-jH)^n=0$. 
	
	We next show that they force the elliptic curve decomposition. By \cite[Theorem 11.1, p. 54]{milneAV}, the degree of $\phi_{M_j}$ is
	\[
	\deg\phi_{M_j}=\chi(A, M_j)^2.
	\]
	Since $\chi(A,M_j)=0$, every $K_j$ has positive dimension. Indeed, since $\dim A=\dim\widehat A$, if $\ker \phi_{M_j}$ were finite,
	the homomorphism $\phi_{M_j}: A\to\widehat A$ would be an isogeny by \cite[Proposition 7.1, p. 32]{milneAV}.
	Since an isogeny has strictly positive degree, this contradicts  $\deg\phi_{M_j}=0$. Thus, $\ker \phi_{M_j}$ is of positive dimension. Since each connected component of $\ker \phi_{M_j}$ is a translation of its identity component $(\ker\phi_{M_j})^0$, we conclude that every $K_j$ has positive dimension.
	
	Let $\iota_j:K_j\hookrightarrow A$ be the inclusion. Since $K_j \subset \ker \phi_{M_j}$, the identity $\phi_{M_j}=\phi_L-j\phi_H$ gives $(\phi_L-j\phi_H)\iota_j=0$, and hence
	\begin{equation} \label{e5021}
		\phi_L\iota_j=j\phi_H\iota_j.
	\end{equation}
	Dualizing \eqref{e5021} with $r$ in place of $j$, and using \eqref{lastee6022}, we obtain
	\begin{equation} \label{e5022}
		\iota_r^\vee\phi_L=(\phi_L\iota_r)^\vee \circ \varpi=(r\phi_H\iota_r)^\vee \circ \varpi=r\,\iota_r^\vee\phi_H.
	\end{equation}
	Both sides of \eqref{e5022} are homomorphisms $A\to\widehat K_r$. Now composing \eqref{e5021} with $\iota_r^\vee$ and using \eqref{e5022},  we obtain
	\begin{equation} \label{lastee701}
		{j\,\iota_r^\vee\phi_H\iota_j =\iota_r^\vee\phi_L\iota_j}={r\,\iota_r^\vee\phi_H\iota_j}.
	\end{equation}
	Thus $(j-r)\bigl(\iota_r^\vee\phi_H\iota_j\bigr)=0$. For $r\ne j$, we have $j-r\ne0$. Since $\Hom(K_j,\widehat K_r)$ is torsion-free as a $\bZ$-module by \cite[Lemma 10.6, p. 45]{milneAV}, we conclude
	\begin{equation} \label{e5023}
		\iota_r^\vee\phi_H\iota_j=0 \qquad(r\neq j).
	\end{equation}
	For the addition map $s:B:=\prod_{j=1}^nK_j\to A$, given by $(x_1, x_2, \cdots, x_n) \mapsto x_1+\cdots+x_n$, the dual is
	\[
	s^\vee:\widehat A \longrightarrow \widehat B=\prod_j\widehat K_j, \qquad P\longmapsto\bigl(\iota_j^*P\bigr)_j.
	\]
	Therefore, the $(r,j)$-entry of $s^\vee\phi_Hs:B\longrightarrow\widehat B$ is exactly
	\[
	K_j\xrightarrow{\iota_j}A \xrightarrow{\phi_H}\widehat A \xrightarrow{\iota_r^\vee}\widehat K_r.
	\]
	Thus, by \eqref{e5023}, every off-diagonal entry for $r \ne j$ vanishes. Moreover, the diagonal entries satisfy $\iota_j^\vee\phi_H\iota_j=\phi_{\iota_j^*H} =\phi_{H|_{K_j}}$. We therefore have
	\[
	s^\vee\phi_Hs=\operatorname{diag}\bigl(\phi_{H|_{K_1}},\ldots,\phi_{H|_{K_n}} \bigr).
	\]
	Each $H|_{K_j}$ is very ample, so the diagonal map on the RHS is an isogeny; see \cite[Proposition 8.1, Proposition 7.1]{milneAV}. Since $\ker s\subseteq\ker(s^\vee\phi_Hs)$, the map $s$ has finite kernel. Consequently,
	\[
	\sum_{j=1}^n\dim K_j\leq\dim A=n.
	\]
	Since $\dim K_j >0$, we obtain $\dim K_j=1$ for every $j$. Since each $K_j$ is a one-dimensional abelian subvariety of $A$, it is an elliptic curve. Moreover, since $\ker s$ is finite, the map $s:\prod_{j=1}^nK_j \longrightarrow A$ is an isogeny by \cite[Proposition 7.1(c)]{milneAV}.
\end{proof}

\begin{remark}  \label{sec5:rem2}
	Moreover, the proof of Theorem \ref{sec5:thm1} allows us to describe the pullbacks $s^*H$ and $s^*L$ as box tensor products of line bundles on the elliptic curves $K_j$, where $s:\prod_{j=1}^nK_j \longrightarrow A$ is the isogeny. Put $H_j:=H|_{K_j}, L_j:=L|_{K_j}$, and $\eta_j:=(L\otimes H^{-j})|_{K_j}$. 
	
	For $r \ne j$, by \eqref{e5023}, we have $\iota_r^\vee\phi_H\iota_j=0$. This gives $\iota_r^\vee\phi_L\iota_j=0$ for $r \ne j$ by \eqref{lastee701}. Consequently, the off-diagonal entries vanish for both $s^\vee\phi_Hs$ and $s^\vee\phi_Ls$, that is, 
	\begin{equation} \label{laste702}
		\phi_{s^*H}=\operatorname{diag}(\phi_{H_1},\ldots,\phi_{H_n}),\qquad \phi_{s^*L} =\operatorname{diag}(\phi_{L_1},\ldots,\phi_{L_n}).
	\end{equation}
	For $T=H$ or $L$, consider $Q_T:=s^*T\otimes\left(\boxtimes_{j=1}^{n}T|_{K_j}\right)^{-1}$.
Then \eqref{laste702} gives $\phi_{Q_T}=0$, hence $Q_T\in\operatorname{Pic}^0(B)$; see \cite[Definition of $\text{Pic}^0A$, p. 35-36]{milneAV}. 
	The restriction of $Q_T$ to each factor $K_j\subset B$ is trivial. Since the restriction map identifies
	\[
	\operatorname{Pic}^0(B) \cong \prod_{j=1}^{n}\operatorname{Pic}^0(K_j),
	\]
	we obtain $Q_T\cong\mathcal O_B$. Thus, $s^*H\cong\boxtimes_{j=1}^{n}H_j$, and  $s^*L\cong\boxtimes_{j=1}^nL_j$.
	
	Since $\eta_j \cong L_j \otimes H_j^{-j}$, \eqref{e5021} gives
	\[
	\phi_{\eta_j}=\phi_{L_j}-j\phi_{H_j}=\iota_j^\vee(\phi_L-j\phi_H)\iota_j=0.
	\]
	Thus $\eta_j\in\operatorname{Pic}^0(K_j)$, and
	$L_j=H_j^{\otimes j}\otimes\eta_j$. This gives  $s^*L\cong
	\boxtimes_{j=1}^{n}\bigl(H_j^{\otimes j}\otimes\eta_j\bigr)$ with $\eta_j\in\operatorname{Pic}^0(K_j)$.
	\end{remark}

\begin{cor} \label{sec5:cor1}
	Let  $A$ be an abelian variety of dimension $n\geq2$ over an algebraically closed field $k$ of characteristic $p>0$. There exist a very ample line bundle $H$ and a line bundle $L$ such that $B_A^1\otimes L$ is Ulrich with respect to $H$ if and only if $A$ is isogenous to a product of $n$ elliptic curves.
\end{cor}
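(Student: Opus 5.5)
The "only if" direction is immediate from Theorem \ref{sec5:thm1}. If $B_A^1\otimes L$ is Ulrich with respect to some very ample $H$, then the numerical conditions \eqref{laste601} hold. The theorem then shows that the addition map $K_1\times\cdots\times K_n\to A$ is an isogeny with each $K_j$ an elliptic curve.

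For the converse, assume there is an isogeny $s\colon B=E_1\times\cdots\times E_n\to A$ with $E_j$ elliptic curves. I would reduce to the case of a product, pass through a dual isogeny, and then construct $H$ and $L$ explicitly.

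\textbf{Reduction to a product.} Composing with a suitable isogeny in the other direction (e.g.\ $\hat s$ after identifying via polarizations), we also get an isogeny $t\colon A\to B'$ onto a product of elliptic curves. It is cleaner, though, to work with the $K_j$ directly. Choose a very ample $H$ on $A$ such that $s^*H\cong\boxtimes H_j$. For example, take $H$ to be a sufficiently high power of the pullback under $t$ of a box product of very ample bundles $\boxtimes\mathcal O_{E'_j}(D_j)$ on $B'$. Then $(\ker\phi_{t^*\ldots})$ is controlled factorwise, and the "factor curves" $K_j\subset A$ are the identity components $t^{-1}(\text{other factors}=0)$.

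\textbf{Choice of $L$.} Because $\mathrm{NS}$ of a product respects the decomposition, we can choose $L$ such that $M_j=L\otimes H^{-j}$ restricts trivially in NS to the curve $K_j$ and is nondegenerate on the complementary directions. Concretely, set $L=t^*\bigl(\boxtimes_r \mathcal O(r D_r)\otimes\eta_r\bigr)$, with $H=t^*(\boxtimes\mathcal O(D_r))$, where $\eta_r\in\mathrm{Pic}^0(E'_r)$ is chosen with $\eta_r^{\otimes p}\not\cong\mathcal O$ (possible since $\mathrm{Pic}^0$ has non-$p$-torsion points). Then $M_j=t^*\bigl(\boxtimes_r\mathcal O((r-j)D_r)\otimes\eta_r\bigr)$, whose $j$-th factor is the degree-zero bundle $\eta_j$. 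Hence $(\ker\phi_{M_j})^0_{\mathrm{red}}$ is the curve $K_j$, and $M_j|_{K_j}$ is a pullback of $\eta_j$ under the isogeny $K_j\to E'_j$.

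\textbf{Acyclicity.} Apply Lemma \ref{lasrkeylem1}: we need $(M_j|_{K_j})^{\otimes p}\not\cong\mathcal O$. This is arranged by choosing $\eta_j$ of infinite order, which is possible because $k$ is algebraically closed and $\mathrm{Pic}^0(E'_j)(k)$ is not torsion when $k\neq\overline{\mathbb F}_p$. Over $\overline{\mathbb F}_p$, instead choose $\eta_j$ of order prime to $p$ and not killed by the pullback, which is feasible by taking the order coprime to $\deg t$. Then $M_j$ and $M_j^{\otimes p}$ are acyclic, and Corollary \ref{lcor1} gives that $B_A^1\otimes L$ is Ulrich.

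\textbf{Main obstacle.} I expect the main obstacle to be ensuring that $K_j=(\ker\phi_{M_j})^0_{\mathrm{red}}$ really is exactly one elliptic curve. This requires $(r-j)D_r$ to be nondegenerate for $r\neq j$, which holds since $\deg D_r>0$. It also requires that the pullback of the nontrivial $\eta_j$ remain non-$p$-torsion on $K_j$. I would handle the latter by the order-coprime-to-$\deg t$ choice.
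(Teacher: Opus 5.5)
Your proof is correct, but the sufficiency direction is organized differently from the paper's.

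\textbf{The paper's route.} It builds $H$ and $L_0$ from the pullbacks $N_i=f^*\mathrm{pr}_i^*U_i$. It then invokes the structural half of Theorem \ref{sec5:thm1} to obtain the curves $K_j$ and the addition isogeny $s\colon\prod K_j\to A$ abstractly. Finally it twists by a single $P\in\widehat A(k)$, chosen via surjectivity of $s^\vee\colon\widehat A\to\prod\widehat K_j$, so that $M_j|_{K_j}\cong Q_j$ for arbitrarily prescribed $Q_j$ with $Q_j^{\otimes p}\not\cong\mathcal O_{K_j}$. Since $\phi_P=0$, the $K_j$ do not move.

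\textbf{Your route.} You keep everything pulled back from the product $B'$. You identify $K_j$ concretely as $(t^{-1}(E'_j))^0_{\mathrm{red}}$, by writing $\phi_{M_j}=t^\vee\circ\operatorname{diag}(\phi_{(r-j)D_r})\circ t$ and using that $\ker t^\vee$ is finite. You then place the degree-zero twists $\eta_r$ on the factors of $B'$.

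The price is that $M_j|_{K_j}\cong(t|_{K_j})^*\eta_j$ is only a pullback. You therefore must ensure that $\eta_j^{\otimes p}$ is not in the kernel of $(t|_{K_j})^\vee$. Your choice of $\eta_j$ of order $m>1$ prime to $p\deg t$ does this, for two reasons. First, $\ker(t|_{K_j})=K_j\cap\ker t$ is a subgroup scheme of $\ker t$, so $\deg(t|_{K_j})$ divides $\deg t$. Second, the $k$-points of the kernel of the dual isogeny are killed by $\deg(t|_{K_j})$.

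\textbf{Comparison.} What you gain is an explicit description of $K_j$ and $L$, without appealing to Theorem \ref{sec5:thm1} or to surjectivity of $s^\vee$. What the paper gains is that prescribing $M_j|_{K_j}$ directly makes the torsion bookkeeping unnecessary.

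Two small clean-ups:
\begin{enumerate}
\item The coprime-order choice works over every algebraically closed $k$, so the case split on $k\neq\overline{\mathbb F}_p$ is superfluous.
\item The sentence asking for a very ample $H$ with $s^*H\cong\boxtimes H_j$ is not what your construction delivers, since the composite $t\circ s$ need not respect the product decompositions. It is never used, so it should simply be dropped.
\end{enumerate}
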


\begin{proof}
	The necessary part follows from Theorem \ref{sec5:thm1}.
	Conversely, choose an isogeny
	\[
	f:A\longrightarrow E_1\times\cdots\times E_n,
	\]
	where $E_i$ are elliptic curves. Let $U_i$ be a positive-degree line bundle on $E_i$. Let $\text{pr}_i: E_1\times\cdots\times E_n \longrightarrow E_i$ be the $i$-th projection. Set $N_i:=f^*\operatorname{pr}_i^*U_i$.
	Choose $m \gg 0$ such that
	\[
	H=\bigg(\bigotimes_{i=1}^nN_i\bigg)^{\otimes m}
	\]
	is a very ample line bundle on $A$. We denote $L_0:=\bigotimes_{i=1}^nN_i^{\otimes mi}$.
	Then
	\[
	(L_0 \otimes H^{-j})=\bigg(\bigotimes_{i=1}^nN_i^{m(i-j)}\bigg).
	\]
	The $j$-th tensor factor is trivial, so $L_0\otimes H^{-j}$ is pulled back from the product of the remaining $n-1$ elliptic curves. 
	Thus, for each $1\leq j\leq n$, the top self-intersection $(L_0-jH)^n=0$.
	
	Define $M_{j,0}:=L_0 \otimes H^{-j}$. Then $c_1(M_{j,0})^n=(L_0-jH)^n=0$. Take 
	\[
	K_j:=\bigl(\ker\phi_{M_{j,0}}\bigr)^0_{\mathrm{red}}.
	\]
	Because $H$ is very ample and $c_1(M_{j,0})^n=0$ for every $1\le j\le n$, the numerical implication in Theorem \ref{sec5:thm1} shows that each $K_j$ is an elliptic curve and that their addition map $s:\prod_{j=1}^nK_j\to A$ is an isogeny.
	
	Denote $\eta_{j,0}:=M_{j,0}|_{K_j}$. Since $K_j \subseteq\ker \phi_{M_{j, 0}}$, for every $x \in K_j(k)$,
	\[
	t_x^*\eta_{j, 0} \cong (t_x^*M_{j, 0})|_{K_j} \cong M_{j, 0}|_{K_j}=\eta_{j, 0},
	\]
	where the translations are taken on $K_j$ and $A$, respectively. Thus $\phi_{\eta_{j, 0}}=0$. Since $K_j$ is an elliptic curve, this gives $\deg \eta_{j, 0}=0$; see \cite[Definition of $\text{Pic}^0A$, pp. 35-36]{milneAV}. Thus, $\eta_{j,0} \in \operatorname{Pic}^0(K_j)$.
	
	The dual isogeny
	\[
	s^\vee:\widehat A \longrightarrow \prod_{j=1}^n\widehat K_j, \qquad P\longmapsto(P|_{K_j})_{j=1}^n,
	\]
	is surjective; see \cite[Theorem 9.1, Proposition 7.1]{milneAV}. Choose $Q_j\in\operatorname{Pic}^0(K_j)$ with
	$Q_j^{\otimes p}\not\cong\mathcal O_{K_j}$,
	which is possible because the group of $p$-torsion elements $\widehat K_j[p]$ is finite. Hence there exists $P\in\widehat A(k)$ such that
	\[
	P|_{K_j}\cong\eta_{j,0}^{-1}\otimes Q_j \qquad(1\leq j\leq n).
	\]
	Set $L:=L_0\otimes P$ and $M_j:=L\otimes H^{-j}$. Since $P\in\operatorname{Pic}^0(A)$, we have $\phi_P=0$; see \cite[Definition of $\text{Pic}^0A$, p. 35-36]{milneAV}. This gives
	\begin{equation} \label{laste61}
		\phi_{M_j}=\phi_{M_{j,0}}+\phi_P=\phi_{M_{j,0}}, \qquad M_j|_{K_j}\cong M_{j, 0} \vert_{K_j} \otimes P\vert_{K_j} \cong Q_j.
	\end{equation}
	Thus, we get $K_j=\bigl(\ker\phi_{M_{j}}\bigr)^0_{\mathrm{red}}$. Since $(M_j \vert_{K_j})^{\otimes p} \cong Q_j^{\otimes p} \ncong \cO_{K_j}$, Lemma \ref{lasrkeylem1} shows that
	\[
	H^q(A,M_j) = H^q(A,M_j^{\otimes p})=0,
	\]
	for every $q$ and for all $1 \le j \le n$. Thus, $B^1_A \otimes L$ is Ulrich \wrt $H$ by Corollary \ref{lcor1}.
\end{proof}

	\begin{cor} \label{sec5:cor2}
		Let $A$ be an abelian variety of dimension $n\geq2$. If $\rho(A)<n$, then there do not exist a line bundle $L$ and a very ample divisor $H$ on $A$ such that $B_A^1\otimes L$ is Ulrich with respect to $H$.
		\end{cor}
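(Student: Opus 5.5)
We argue by contraposition, using Theorem \ref{sec5:thm1}. Suppose that $B_A^1\otimes L$ is Ulrich with respect to a very ample $H$. Then Theorem \ref{sec5:thm1} gives elliptic curves $K_1,\dots,K_n\subset A$ such that the addition map $s:B:=\prod_{j=1}^nK_j\to A$ is an isogeny. The goal is to deduce $\rho(A)\ge n$, which contradicts the hypothesis $\rho(A)<n$.

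The natural divisor classes to use are pullbacks from the quotients. For each $j$, consider the abelian subvariety $A_j':=\sum_{r\ne j}K_r$ of $A$. It has dimension $n-1$, since $s$ has finite kernel and so the partial sum map from $\prod_{r\neq j}K_r$ is also finite onto its image. The quotient $q_j:A\to E_j:=A/A_j'$ is then an elliptic curve. Fix a point $P_j$ on $E_j$ and set $D_j:=q_j^*(P_j)$, a nef divisor class on $A$. It then suffices to show that $D_1,\dots,D_n$ are linearly independent in $NS(A)\otimes\mathbb Q$.

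To prove the independence, we intersect with the curves $K_r$. First, $D_j\cdot K_r=\deg(q_j|_{K_r})$. For $r\ne j$ we have $K_r\subset A_j'=\ker q_j$, so $q_j|_{K_r}$ is constant and $D_j\cdot K_r=0$. For $r=j$, the restriction $q_j|_{K_j}$ is nonconstant: otherwise $K_j\subset A_j'$, and then $A=K_j+A_j'$ would equal $A_j'$, which has dimension $n-1$, a contradiction. Hence $D_j\cdot K_j>0$. The intersection matrix $(D_j\cdot K_r)_{j,r}$ is therefore diagonal with positive diagonal entries, hence nonsingular. Suppose $\sum c_jD_j\equiv0$ numerically. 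Intersecting with $K_r$ gives $c_r=0$ for every $r$. So $\rho(A)\ge n$.

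Alternatively, one can use Remark \ref{sec5:rem2}: the classes $\iota_r^\vee\phi_H\iota_r$ show that $s^*$ is injective on $NS\otimes\mathbb Q$ and that $NS(B)$ contains the $n$ independent classes $\mathrm{pr}_j^*[\mathrm{pt}]$. The direct argument above, however, avoids any descent issue.

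The only delicate point is checking that $A_j'$ is a proper abelian subvariety, so that $E_j$ is really an elliptic curve and $q_j|_{K_j}$ is nonconstant. Both facts follow from the finiteness of $\ker s$, which gives $\dim A_j'=n-1$.
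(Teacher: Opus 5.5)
Your argument is correct, but it takes a different route from the paper.

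\textbf{The paper's route.} After obtaining the isogeny $s:B=\prod_{j=1}^n K_j\to A$ from Theorem \ref{sec5:thm1}, the paper cites two external facts. First, the Picard number is invariant under isogeny, so $\rho(A)=\rho(B)$. Second, $\rho(B)\ge\sum_j\rho(K_j)=n$ for a product of elliptic curves.

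\textbf{Your route.} You stay on $A$ and exhibit $n$ explicit nef classes $D_j=q_j^*(P_j)$. These are pulled back along the elliptic quotients $q_j:A\to A/\sum_{r\neq j}K_r$. Their intersection matrix against the curves $K_r$ is diagonal with positive entries $\deg(q_j|_{K_j})$. The needed checks all follow correctly from $s$ being surjective with finite kernel:
\begin{itemize}
\item $\sum_{r\ne j}K_r$ has dimension $n-1$;
\item $q_j$ kills $K_r$ for $r\neq j$;
\item $q_j$ is nonconstant on $K_j$.
\end{itemize}
Finally, numerical independence implies independence in $NS(A)\otimes\mathbb{Q}$, which gives $\rho(A)\ge n$.

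\textbf{Comparison.} Your version is self-contained. It replaces both citations by an elementary intersection computation and avoids passing from $B$ back to $A$ altogether. The paper's version is shorter but depends on the two cited results.

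\textbf{A caution about your ``alternative'' paragraph.} Injectivity of $s^*$ on $NS\otimes\mathbb{Q}$ gives $\rho(A)\le\rho(B)$, which is the wrong direction. To transfer $\rho(B)\ge n$ to $A$ you need the reverse inequality. That comes from an isogeny $t:A\to B$ with $s\circ t=[m]_A$, using injectivity of $t^*$, and this is exactly the isogeny invariance the paper cites. Your direct argument does not rely on that paragraph, so the proof is unaffected.
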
		
		
		\begin{proof}
Suppose, for a contradiction, that such $L$ and $H$ exist. By Theorem \ref{sec5:thm1}, there exists an isogeny $s:B:=\prod_{j=1}^nK_j \longrightarrow A$ such that each $K_i$ is an elliptic curve. Since the Picard number is invariant under isogenies, we have $\rho(A)=\rho(B)$; see \cite[Section 2.2]{laface2018picard}. Since $\rho(B) \geq \sum_{i=1}^n\rho(K_i)$ by \cite[Proposition 1.7, p. 753]{skorobogatov2014brauer}, we get $\rho(A) \ge n$, giving a contradiction.
		\end{proof}

			\subsection*{Acknowledgements} 
			The author is grateful to A. J. Parameswaran for several very helpful discussions during the initial stages of this work, while visiting the Kerala School of Mathematics (KSoM), Kerala. The author thanks KSoM for its hospitality during this visit. The author also thanks him for his comments on this manuscript. The author was supported by a Postdoctoral Fellowship at TIFR, Mumbai, where this work was initiated, and was subsequently supported by a Visiting Scientist position at ISI Bangalore. The author is currently supported by a Postdoctoral Fellowship at IISER Pune.

	\bibliographystyle{plainurl}
	\bibliography{ulrich.bib}

	\end{document}